\newcommand{\Lt}{\mbox{$\mathfrak t$}}
\newcommand{\Pf}{{\em Proof}. }
\newcommand{\EPf}{\hfill$\square$}
\newcommand{\Z}{\mbox{$\mathbf Z$}}
\newcommand{\R}{\mbox{$\mathbf R$}}
\newcommand{\C}{\mbox{$\mathbf C$}}
\newcommand{\Q}{\mbox{$\mathbf H$}}
\newcommand{\SU}[1]{\mbox{$\mathbf{SU}(#1)$}}
\newcommand{\U}[1]{\mbox{$\mathbf{U}(#1)$}}
\newcommand{\SP}[1]{\mbox{$\mathbf{Sp}(#1)$}}
\newcommand{\SO}[1]{\mbox{$\mathbf{SO}(#1)$}}
\newcommand{\OG}[1]{\mbox{$\mathbf{O}(#1)$}}
\newcommand{\Spin}[1]{\mbox{$\mathbf{Spin}(#1)$}}
\newcommand{\E}[1]{\mbox{$\mathbf{E}_{#1}$}}
\newcommand{\F}{\mbox{$\mathbf{F}_4$}}
\newcommand{\G}{\mbox{$\mathbf{G}_2$}}
\newcommand{\tref}[1]{Theorem~\ref{#1}}
\newcommand{\cref}[1]{Corollary~\ref{#1}}
\newcommand{\pref}[1]{Proposition~\ref{#1}}
\newcommand{\lref}[1]{Lemma~\ref{#1}}
\newtheorem{thm}{Theorem}[section]
\newtheorem*{thm*}{Theorem}
\newtheorem*{thmmain*}{MAIN THEOREM}
\newtheorem{lem}[thm]{Lemma}
\newtheorem{cor}[thm]{Corollary}
\newtheorem{prop}[thm]{Proposition}
\newtheorem*{prop*}{Proposition}
\theoremstyle{definition}
\theoremstyle{remark}
\newtheorem{rem}{Remark}[section]
\newtheorem{ex}[rem]{Example}
\newtheorem{quest}{Question}[section]
\title{On orbit spaces of\\ 
representations of compact Lie groups}
\author{Claudio Gorodski and Alexander Lytchak}
\thanks{The first author was partially supported by the
CNPq grant 302472/2009-6 and the FAPESP project 2007/03192-7.} 
\thanks{The second author was partially supported 
by a Heisenberg grant of the DFG and by the SFB 878
{\it Groups, geometry and actions}}
\date{\today}
\begin{document}

\begin{abstract}
We investigate orthogonal representations of compact Lie groups from the 
point of view of their quotient spaces, considered as metric spaces.  
We study metric spaces which are simultaneously quotients of
different representations and investigate properties of the corresponding 
representations. We obtain some structural results and apply them
to study irreducible representations of small copolarity.
As an important tool, we classify all irreducible 
representations of connected groups with cohomogeneity four or five.

\end{abstract}

\maketitle

\section{Introduction}
\subsection{General observations}  For an orthogonal representation 
$\rho :G \to \OG{V}$  of a compact Lie group $G$, the quotient metric space 
$V/G$ is the most important invariant of the action, at least  
from the metric point of view.  It seems to be a  difficult task to extract
information about algebraic invariants from the metric quotient.  It is even worse:
some algebraic invariants   are not (!) invariants of the quotient.   To address
this issue we define         
two representations $\rho_ i:G_i  \to\OG{V_i}$, $i=1, 2$, to be   
\emph{quotient-equivalent}  if~$V_1/G_1$ 
and~$V_2/G_2$ are isometric. 

We think that the study of quotient-equivalence classes is  
interesting in its own right. Moreover, the examples and connections 
that we discuss below suggest that quotient-equi\-va\-lence classes
can help to understand the geometry of representations in general,
and provide new geometrically interesting classes of representations. 
It seems to us that the understanding and solution of the most basic 
problems will require deep and interesting insights 
into the structure of the quotients. We were  able to solve most 
basic questions only in some special  cases and 
hope that the results obtained in the present paper will be only 
a first step in this direction.

Even the most basic algebraic invariant, the dimension of the 
representation space $\dim (V)$, is not always an invariant of 
a quotient-equivalence class.
The following observation, that is  a basic step for all
subsequent considerations, gives an easy tool to study the 
dimension of representation spaces in a given 
quotient-equivalence class. 

\begin{prop} \label{veryfirst}
Let $\rho_i :G_i\to \OG {V_i}$, $i=1$, $2$, 
be two quotient-equivalent representations.  
If the quotient space $V_i /G_i$ has no boundary then 
$\dim (V_1)=\dim (V_2)$. Under the weaker assumption that  
the quotient space $V_1/G_1^0$ of the representation of the 
identity component  $G_1 ^0$  of $G_1$
has no boundary, the inequality   $\dim (V_1) \leq  \dim (V_2)$ holds true.
\end{prop}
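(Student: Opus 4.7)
\medskip
\noindent\textit{Proof plan.} The approach is to reduce the comparison of $\dim V_i$ to a comparison of principal orbit dimensions via the Hausdorff dimension, then exploit the cone structure, and finally use the no-boundary hypothesis to establish a minimality statement for the ambient dimension.

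I would first record that $V_i/G_i$ is a nonnegatively curved Alexandrov space whose Hausdorff dimension equals the cohomogeneity $c_i := \dim V_i - \dim(G_i\cdot v)$ for $v$ principal. Being a metric invariant, this yields $c_1 = c_2 =: c$, so proving $\dim V_1 \leq \dim V_2$ (or equality) is equivalent to doing so for the principal orbit dimensions $d_i := \dim V_i - c$. Next, I would use that each $V_i/G_i$ is a Euclidean cone over the spherical quotient $S(V_i)/G_i$. Splitting off the maximal Euclidean factor of the cone---this dimension is metrically detectable and corresponds to $V_i^{G_i}$---we may assume a unique apex in each, so the isometry sends apex to apex and restricts to an isometry of the links.

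The decisive step is to recover $d_1$ from the metric under the no-boundary hypothesis. My target would be a minimality claim: if $V_1/G_1^0$ has no boundary, then no representation quotient-equivalent to $\rho_1$ can have smaller ambient dimension. Heuristically, any dimension drop within a quotient-equivalence class must collapse a positive-dimensional orbit direction, and such a collapse should manifest as a codimension-one singular stratum---that is, a boundary---in $V_1/G_1^0$. I would argue this by induction on $\dim V$ via tangent cones at singular points: the tangent cone at a singular $p \in V_1/G_1$ is the quotient of a slice representation of strictly smaller ambient dimension, and corresponding points in $V_2/G_2$ yield quotient-equivalent slice representations falling under the induction hypothesis. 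Equality in the both-without-boundary case follows by symmetry, while the asymmetric weaker statement uses that $V_1/G_1 = (V_1/G_1^0)/\Gamma$ with $\Gamma = G_1/G_1^0$ finite, so the comparison lifts through this finite quotient.

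The main obstacle lies in this third step: forging the link between ``dimension drop in a quotient-equivalence class'' and ``creation of boundary strata.'' Making this rigorous requires careful control of slice representations at subprincipal strata and a well-chosen induction, complicated by the fact that at the apex the slice is the full representation, so the induction cannot begin there but must exploit points of intermediate orbit type whose tangent cones genuinely reduce the problem.
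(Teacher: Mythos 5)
Your preliminary reductions are fine and consistent with what the paper does: the cohomogeneity is the Hausdorff dimension of the quotient, so the problem is indeed equivalent to comparing principal orbit dimensions, and the splitting-off of the maximal Euclidean factor and the passage to the spherical links $S(V_i)/G_i$ are both used in the paper (Section 5). But the decisive third step --- that absence of boundary in $V_1/G_1^0$ forces $\dim V_1$ to be minimal in the quotient-equivalence class --- is exactly the content of the paper's Proposition 5.2, and the mechanism you sketch for it (induction on tangent cones at singular points) cannot be made to work. Two reasons. First, there may be no singular points away from the apex at all (e.g.\ when $G_1^0$ acts freely on $S(V_1)$, as for the Hopf action of $\U1$ on $\C^2$, whose quotient has empty boundary); then there is nothing to induct on, yet the statement is nontrivial. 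Second, even where singular points exist, the tangent cone at $x=G\cdot p$ is $\mathcal H_p/G_p$, whose ambient dimension is $\dim V - \dim(G\cdot p)$; the inductive hypothesis therefore only compares orbit \emph{codimensions}, and to convert that into a comparison of the $\dim V_i$ you again need to compare orbit dimensions --- which is the quantity you set out to control. The argument is circular: all local tangent-cone data at non-apex points determines only $\dim\mathcal H_p$, never $\dim(G\cdot p)$ or $\dim V$ separately.

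The paper escapes this by a genuinely global (semi-global) argument of a different nature. Since $S(V_1)/G_1^0$ has no codimension-one strata, one can run a geodesic $\gamma$ of length $\pi$ entirely inside the regular part, starting at a point projecting to a regular point of $Y=S(V_1)/G_1=S(V_2)/G_2$. Its index (number of conjugate points) equals the $L$-index of a horizontal lift $\eta$ to the round sphere $S(V_1)$, where $L$ is the starting $G_1^0$-orbit; and in the round sphere the $L$-index of any $L$-geodesic of length $\pi$ is exactly $\dim L$. Pushing $\gamma$ down to $Y$ and lifting to $S(V_2)$, the result of Lytchak--Thorbergsson cited as \cite{LT} gives that the $L'$-index of the lift equals the orbifold index of the pushed-down geodesic plus a nonnegative ``vertical'' contribution, whence $\dim L'\geq\dim L$ and $\dim V_2\geq\dim V_1$. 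This focal-point count is the missing idea: it is the only place where the principal orbit dimension becomes metrically visible, and no amount of local stratum analysis substitutes for it. You correctly flagged this step as the main obstacle, but the route you propose toward it does not close the gap.
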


Here, the \emph{boundary} of the quotient space is understood in the 
sense of Alexandrov spaces, i.e., it is the closure of the union
of all strata of codimension~$1$ in the quotient space 
(cf.~subsection~\ref{boundary}). Already this first observation shows 
that the non-triviality of quotient-equivalence classes is closely 
related to this intriguing property of representations: 
the presence or not of boundary in the quotient. This extremely
restrictive property has been studied by Kollross and Wilking, 
who have announced 
a classification of all representations of simple connected
groups with this property. In the case of semi-simple groups the 
problem seems to be much more involved.

Before we proceed, we present a few important families of 
examples of quotient-equivalence. 
The simplest example of quotient-equivalence is given by 
\emph{orbit-equivalence}.
Recall that representations $\rho_1$ and $\rho_2$ 
are called \emph{orbit-equivalent} if there exists
an isometry from $V_1$ to $V_2$ that maps $G_1$-orbits onto $G_2$-orbits.  
In particular, any representation
is orbit-equivalent to its effectivization.
Thus we may and will always restrict ourselves to effective representations.

The next very important family of examples is provided by the  
reduction of the principal isotropy groups, 
often used in geometry and algebra 
(cf.~\cite{lr,str,gt0}). Assume that the principal isotropy group $K$ of 
the effective representation $\rho:G\to\OG{V}$ is
non-trivial. Let $V^K$ denote the subspace of all fixed points of 
$K$ and let $N(K)$ denote the normalizer of $K$
in $G$ with its induced action on $V^K$.  
Then the inclusion  $V^K\to V$ induces an isometry $V^K/\bar N \to V/G$,
where $\bar N=N(K)/K$. Note that $\dim (V^K) < \dim (V)$.  

A family that is at the origin of all ideas and results of this paper 
is given by \emph{polar} representations
(we refer the reader not 
acquainted with this species to subsection~\ref{copolar} below,
or to~\cite{dad,pt2,bco}).  
For connected groups, these 
are representations orbit-equivalent to the isotropy representations of 
symmetric spaces and, already due to this fact,
closely tied with many interesting  geometric and algebraic objects.  
In our language, a representation is polar if and only
if it is quotient-equivalent to a representation of a finite group. 
As a main example: the isotropy representation of a symmetric space
is quotient-equivalent to the action of the corresponding Weyl group 
on a maximal infinitesimal flat.

The preceding family of examples shows in an extreme way  
that a representation of a connected group may be quotient-equivalent 
to a representation of a disconnected group. 
This is a source of  difficulties  but gives rise
to some nice geometric considerations.  
Recall that the Weyl group  of a symmetric
space is a finite Coxeter group generated by reflections.    
It turns out that 
in general the picture is not much different:

\begin{prop} \label{verysecond}
Let $\rho_i:G_i \to \OG {V_i}$, $i=1$, $2$,
be quotient-equivalent representations. 
Assume that $G_1$ is connected. Then the 
action of the finite group $G_2 /G_2 ^0$ of connected components of 
$G_2$ on $V_2 /G_2 ^0$ is 
generated by reflections at subspaces of codimension~$1$ in $V_2 /G_2 ^0$.
\end{prop}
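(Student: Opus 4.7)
Set $X := V_1/G_1 = V_2/G_2$ and $Y := V_2/G_2^0$, so $\Gamma := G_2/G_2^0$ acts by isometries on $Y$ with $Y/\Gamma = X$. Let $\Gamma'\subset\Gamma$ denote the subgroup generated by all elements whose fixed set in $Y$ has codimension one, i.e., the reflections; since conjugates of reflections are reflections, $\Gamma'$ is normal. The plan is to prove $\Gamma=\Gamma'$ via a covering-space argument crucially using the connectedness of $G_1$.

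The key preliminary observation is that $X$ is contractible -- the radial homotopy on $V_1$ descends to a deformation retraction of $X$ onto the image of $0$ -- so $\pi_1(X)=1$. Since removing a closed subset of codimension $\geq 2$ from a simply-connected Alexandrov space does not alter the fundamental group, the open subset $X^\circ \subset X$ obtained by deleting all Alexandrov strata of codimension $\geq 2$ is simply connected; concretely, $X^\circ$ is the locus where $X$ is a Riemannian manifold with (possibly empty) boundary. Setting $Y^\circ := \pi^{-1}(X^\circ)$ for the projection $\pi\colon Y\to X$, the complement $Y \setminus Y^\circ$ has codimension $\geq 2$ in $Y$ since $\pi$ is finite-to-one; as $Y$ is connected (quotient of the connected $V_2$ by the connected group $G_2^0$), so is $Y^\circ$.

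Next, I would show $\Gamma_y \subset \Gamma'$ for every $y \in Y^\circ$. The tangent-cone identification $T_yY/\Gamma_y = T_{[y]}X$ realizes either $\R^n$ or the half-space $\R^n_+$ as a finite quotient of the nonnegatively-curved Alexandrov cone $T_yY$; a standard rigidity argument then forces $T_yY \cong \R^n$. Thus $\Gamma_y$ is a finite orthogonal group on $\R^n$ with smooth quotient, and the Chevalley--Shephard--Todd theorem, combined with the form of $T_{[y]}X$, yields that $\Gamma_y$ is generated by at most one orthogonal reflection; each such generator extends to a global reflection of $Y$, and hence $\Gamma_y \subset \Gamma'$.

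It follows that $\Gamma/\Gamma'$ acts freely on $Y^\circ/\Gamma'$: if $g\Gamma'$ fixes $[y]_{\Gamma'}$, then $gy = \gamma' y$ for some $\gamma' \in \Gamma'$, so $(\gamma')^{-1} g \in \Gamma_y \subset \Gamma'$, whence $g \in \Gamma'$. Therefore $Y^\circ/\Gamma' \to X^\circ$ is a regular $\Gamma/\Gamma'$-covering between connected spaces with simply-connected base; any such covering is a homeomorphism, giving $\Gamma/\Gamma' = 1$, i.e., $\Gamma = \Gamma'$. The principal technical obstacle is the third paragraph, which requires (i) the Alexandrov-geometric rigidity that a flat finite quotient of a nonnegatively-curved cone forces the cone itself to be flat Euclidean, and (ii) identifying the codimension-one local fixed sets produced by Chevalley--Shephard--Todd with reflections of $Y$ in the sense of the statement; a workable alternative is to first perform a principal-isotropy reduction on $\rho_2$ so that $Y$ is a manifold on its regular part and then run the same covering argument.
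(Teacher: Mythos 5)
There is a genuine gap in the second paragraph: the claim that deleting all strata of codimension $\geq 2$ from the simply connected space $X$ leaves a simply connected set $X^\circ$. Removing a codimension-two subset preserves surjectivity of $\pi_1(X^\circ)\to\pi_1(X)$ but not injectivity (compare $\R^3$ minus a line), and in the present setting the claim actually fails. For example, let $G_1=\U1$ act on $V_1=\C^2$ by $z\cdot(v,w)=(z^2v,z^3w)$. This is effective, $G_1$ is connected, the quotient $X$ is the cone over the $(2,3)$-weighted projective line (a ``football'' with two cone points), there are no codimension-one strata, and the two exceptional strata (isotropy $\Z_2$ and $\Z_3$) have codimension two. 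Hence $X^\circ=X_{reg}$ is, up to the radial factor, a two-sphere with two points removed, so $\pi_1(X^\circ)\cong\Z$. Your covering argument in the last paragraph then only shows that $\Gamma/\Gamma'$ is a quotient of $\pi_1(X^\circ)$ modulo the image of $\pi_1(Y^\circ/\Gamma')$, which does not force $\Gamma=\Gamma'$: the loops encircling exceptional strata are exactly the ones your argument cannot control.

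This is precisely the difficulty the paper's proof is built to handle. Instead of the ordinary fundamental group of $X$ minus the high-codimension strata, the paper uses the \emph{orbifold} fundamental group of $X_{orb}\setminus\partial X_{orb}$ (the set of non-singular orbits, \emph{including} the exceptional strata with their orbifold structure). Lemma~\ref{simplycon} shows this orbifold fundamental group is trivial when the group is connected and the manifold simply connected --- in the example above, $\pi_1^{orb}$ of the $(2,3)$-football is trivial even though the ordinary $\pi_1$ of its regular part is $\Z$ --- and Lemma~\ref{keyreflection} then converts this into the statement that $G_2/G_2^0$ acts by reflections. So the skeleton of your argument (a covering-space argument over the good part of $X$, combined with a local analysis of isotropy groups along codimension-one strata, which is essentially the computation in subsection~\ref{boundary}) is the right one, but it must be run in the category of orbifold coverings; with ordinary coverings the crucial simple-connectivity input is false. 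Your suggested fallback (principal-isotropy reduction) does not repair this, since it changes $G_2^0$ and its component group and does not remove the exceptional strata of $X$.
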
 

This observation seems to be new even for the reduction 
to the normalizer of the principal isotropy group. 
In this case \pref{verysecond} implies 
the following purely group-theoretic consequence:

\begin{cor} \label{principalcoxeter}
Let $\rho : G \to \OG {V}$ be a representation of a compact 
connected group. Let $K$ be a  principal isotropy group, let 
$N(K)$ denote the normalizer of $K$ in $G$ and set $\bar N=N(K)/K$.  
Then the group of connected components $\bar N / \bar N^0$
of $\bar N$ is a  Coxeter group.
\end{cor}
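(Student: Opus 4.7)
The plan is to deduce \cref{principalcoxeter} from \pref{verysecond} applied to the quotient-equivalence coming from the reduction to the normalizer of the principal isotropy group. As recalled in the discussion following the statement of \pref{veryfirst}, the inclusion $V^K \hookrightarrow V$ induces an isometry $V^K/\bar N \to V/G$, so the original representation $\rho:G\to \OG V$ is quotient-equivalent to the reduced representation $\rho_2: \bar N\to \OG{V^K}$.

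Since $G$ is connected, \pref{verysecond} applies to this pair (with $G_1=G$ and $G_2=\bar N$) and yields that the action of the finite group $\bar N/\bar N^0$ on the quotient $V^K/\bar N^0$ is generated by reflections at subspaces of codimension one. Thus the image of $\bar N/\bar N^0$ in the isometry group of $V^K/\bar N^0$ is a finite reflection group, hence a finite Coxeter group. To finish it suffices to check that the action of $\bar N/\bar N^0$ on $V^K/\bar N^0$ is \emph{faithful}, for then the abstract group $\bar N/\bar N^0$ is itself the Coxeter group produced above.

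Faithfulness follows from the standard observation that the reduced representation $\rho_2$ has trivial principal isotropy. Indeed, if $v\in V$ is a principal point of $\rho$ with isotropy exactly $K$, then $v\in V^K$ and its $N(K)$-stabilizer is $N(K)\cap K=K$, so its $\bar N$-stabilizer is trivial; in particular $v$ is a principal point for $\rho_2$. If a class $g\bar N^0\in \bar N/\bar N^0$ acts trivially on $V^K/\bar N^0$, then $gv$ and $v$ lie in the same $\bar N^0$-orbit, which forces $g\in \bar N^0$ by the triviality of the $\bar N$-stabilizer at $v$. This gives the desired faithfulness.

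The substantive content of the corollary is entirely contained in \pref{verysecond}; the step carried out here is a short bookkeeping argument about principal points in the normalizer reduction. If there is any delicate spot, it is ensuring that one really may pass from the statement that the action of the component group is \emph{generated by reflections} to the conclusion that the abstract group $\bar N/\bar N^0$ \emph{is} a Coxeter group, and the faithfulness check above is precisely what bridges this gap.
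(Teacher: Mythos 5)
Your reduction to the normalizer of the principal isotropy group and your faithfulness argument are both correct and match what the paper does: the paper likewise observes that triviality of the principal isotropy of the reduced action makes $\bar N/\bar N^0$ embed into the isometry group of $V^K/\bar N^0$. However, there is a genuine gap at the step ``the image of $\bar N/\bar N^0$ \dots is a finite reflection group, hence a finite Coxeter group.'' Being generated by reflections (or by involutions) does not by itself make a finite group a Coxeter group --- the alternating group $A_5$ is generated by involutions but is simple, hence not Coxeter --- and $V^K/\bar N^0$ is a singular quotient space, not a Euclidean space, so the classical theory of linear reflection groups does not apply directly. You have also misidentified where the difficulty lies: faithfulness only bridges the (easy) gap between ``the image is Coxeter'' and ``the abstract group is Coxeter''; it does nothing to establish that the image, a reflection group on a quotient space, admits a Coxeter presentation.

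The paper closes exactly this gap with its orbifold machinery, and this is why it deduces the corollary from \pref{onereflection} rather than from \pref{verysecond}: the ``Moreover'' clause of \pref{onereflection} asserts that the image of $G_2/G_2^0$ in the isometry group of $M_2/G_2^0$ is a Coxeter group, and its proof uses \lref{simplycon} (the non-boundary orbifold part of $M_2/G_2^0$ has trivial orbifold fundamental group) together with \lref{lemcoxeter} (a reflection group on an orbifold with trivial orbifold fundamental group is Coxeter), which in turn rests on the explicit presentation of orbifold fundamental groups by codimension-one and codimension-two strata. To repair your proof, replace the appeal to \pref{verysecond} by an appeal to the full statement of \pref{onereflection} (with $G_1=G$ connected, so $G_i^+=G_i$), after which your faithfulness argument correctly finishes the job.
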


We would like to mention that \pref{verysecond} remains valid for 
isometric actions on simply connected Riemannian manifolds, 
cf.~section~\ref{secreflections}. 

\subsection{Main results} 
We have seen above that non-triviality of quotient-equivalence class 
has very strong and interesting implications on the structure 
of the corresponding quotient space and representations.  
Now, we are going to describe a program
of finding all such non-trivial classes and the main results of 
this paper, closely related to this program.  

We call a representation $\rho :G \to \OG {V}$ \emph{reduced} 
if the dimension of $G$ is minimal in the quotient-equivalence class of $\rho$.
For quotient-equivalent representations $\rho _i:G_i\to \OG {V_i}$, 
$i=1$, $2$, we will call $\rho _2$ a \emph{reduction}
of $\rho _1$ if $\dim (G_2) < \dim (G_1)$.  
If, in addition,  
$\rho _2$ is reduced it will be called a \emph{minimal reduction} 
of $\rho _1$. Note that a reduced representation $\rho :G\to \OG {V}$  
has trivial principal isotropy groups, thus  satisfies 
$\dim (G) + \dim (V/G) =\dim (V)$.   Hence, $\dim (V)$ is minimal in the quotient-equivalence class of $\rho$.

It seems reasonable to analyze quotient-equivalent classes  
of irreducible representations of connected groups
by looking for possible minimal elements in such classes, 
thus by asking the following question:
 
\begin{quest}
What  are reduced representations $\tau  :G \to \OG {V}$ 
that are non-trivial reductions of an irreducible representation of some
connected group?  
\end{quest} 

We fix the group $G$ and run through the different representations of $G$. 
The above propositions and observations impose severe restrictions. First, 
the principal isotropy group must be trivial.
Second, either the group $G$ must be connected and 
$V/G$ must have a non-trivial boundary, or $G^0$ must be normalized by 
an involution in $\OG {V}$ that acts
as a reflection on $V/G^0$.   
Both conditions are not very difficult to check for any concrete 
representation of the connected component $G^0$ of
$G$.  Moreover, it seems that (for any group $G^0$) the   
conditions can be fulfilled only for finitely many irreducible 
representations of $G^0$. It appears possible to understand  the situation 
completely for any given group $G$, however, with the growing complexity 
of $G$ the case-by-case study seems to become more and more involved.
In the case of small-dimensional $G$ we obtain a satisfactory answer, 
demonstrating how strong the restrictions are.

\begin{thm} \label{verythird}
Let $\rho  :H \to \OG {W}$ be an irreducible representation of a compact 
connected group ~$H$. Let  $\tau :G \to \OG {V}$  be a minimal 
reduction of $\rho$. If $\dim (G) \leq 6$ then $G^0$ is a
torus $T^k$. Moreover, if $k\geq 1$ then $\dim (V) =2k+2$.
\end{thm}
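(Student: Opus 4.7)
The plan starts by recording the restrictions coming from the earlier propositions and from the notion of a minimal reduction. Because $\tau$ is reduced, its principal isotropy is trivial and $\dim(V)=\dim(G)+\dim(V/G)$; in particular the principal isotropy of the identity component $G^0$ on $V$ is finite. Because $H$ is connected and $\tau$ is a non-trivial reduction, the dichotomy noted after \pref{verysecond} applies: either $G$ is connected and $V/G$ has non-empty boundary, or $G$ is disconnected and there is an involution of $V$ normalizing $G^0$ and acting as a reflection on $V/G^0$.

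Next, I would enumerate the compact connected Lie groups $G^0$ of dimension at most six: the tori $T^k$ with $0\leq k\leq 6$, and the non-abelian groups built from rank-one simple factors, that is, $\SU{2}$, $\SO{3}$, the products $\SU{2}\times T^j$ and $\SO{3}\times T^j$ with $j\leq 3$, and the semi-simple rank-two products $\SU{2}\times\SU{2}$, $\SU{2}\times\SO{3}$, $\SO{3}\times\SO{3}$. For each non-abelian $G^0$, the representations $\tau$ with finite principal isotropy can be listed explicitly, since the small dimension of $G^0$ only accommodates a few low-dimensional irreducible summands of the rank-one factors before slice analysis forces positive-dimensional principal isotropy.

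The technical heart of the argument, and the main obstacle, is the elimination of every non-abelian candidate. For each such $(\tau,G^0)$ I would verify that the boundary or reflection structure on $V/G^0$ is compatible with the principal isotropy condition, and then show that $V/G$ cannot be isometric to $W/H$ for any irreducible representation $\rho:H\to\OG{W}$ of a connected group $H$. The cohomogeneity $c=\dim(V/G)=\dim(W/H)$ is invariant and is bounded by $\dim(V)-\dim(G^0)$; for the cohomogeneities that arise I would invoke the classification of irreducible representations of connected groups of small cohomogeneity, in particular the cohomogeneity-four and cohomogeneity-five classifications announced in the abstract of this paper, together with direct inspection of slice representations and of the coarse combinatorial structure of $W/H$. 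Matching the two lists excludes every non-abelian $G^0$.

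Once $G^0=T^k$ with $k\geq 1$ is established, the trivial principal isotropy forces $V=V_1\oplus\cdots\oplus V_n$ as a sum of $n$ two-dimensional non-trivial weight spaces whose weights span the dual of the Lie algebra of $T^k$, so $n\geq k$. The case $n=k$ is excluded because $\tau$ would then be polar, hence $\rho$ polar, forcing the minimal reduction to have a finite identity component, contradicting $k\geq 1$. The cases $n\geq k+2$ are ruled out using the same classification, by matching the cohomogeneity $2n-k$: no irreducible representation of a connected group produces a quotient isometric to that of $T^k$ on $\C^n$ with $n\geq k+2$ whose minimal reduction is a torus of dimension exactly $k$. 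Hence $n=k+1$ and $\dim(V)=2k+2$.
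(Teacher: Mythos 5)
Your overall architecture (exploit trivial principal isotropy and the boundary/reflection dichotomy, enumerate the groups of dimension at most six, and appeal to the cohomogeneity-$4$ and $-5$ classification) resembles the paper's only in its outer shell; the two steps where you wave at "matching lists" are precisely where the real work lies, and your proposed substitutes do not close.

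The most serious gap is in the torus case. After reducing to $G^0=T^k$ acting on $\C^n$ with $n\geq k+1$, you propose to exclude $n\geq k+2$ "by matching the cohomogeneity $2n-k$" against the classification of irreducible representations of small cohomogeneity. But that classification (\tref{mainclass}) covers only cohomogeneities $4$ and $5$, whereas $2n-k\geq k+4$ is unbounded; there is no list to match against, and verifying that no connected irreducible $\rho$ has quotient isometric to $\C^n/T^k$ for each such $n$ is exactly the unproved content. The paper closes this by a completely different, geometric argument: since $H$ is connected, $G/G^0$ acts on $V/G^0$ as a reflection group (\pref{onereflection}); nice involutions swapping isotypical components force cohomogeneity one on each component (\lref{keyreducible}), so $G/G^0$ permutes the $n$ weight lines as the \emph{full} symmetric group (\lref{isotypical}); the weight lines are therefore equiangular and, by the simplex lemma inside \pref{torus}, must be the $k+1$ vertices of a regular simplex. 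This is what yields $n=k+1$, hence $\dim(V)=2k+2$.

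A second gap is your claim that for each non-abelian $G^0$ the candidate representations "can be listed explicitly." That is true for irreducible actions of $G^0$ (and there the paper's weight-space and nice-involution dimension counts, Sections~\ref{connectedcase}--\ref{discconnectedcase}, do the elimination, falling back on \tref{mainclass} for just two residual cases). But a minimal reduction may have $G^0$ acting \emph{reducibly} -- e.g.\ $\SU2\times T^2$ or $\U2$ on a sum of copies of the vector representation -- and reducible representations with trivial principal isotropy are not a finite list. The paper's whole Section~\ref{secmain} (\pref{mainproposition}, culminating in \tref{special}) exists to handle this: it shows that a trivial-copolarity irreducible extension of a reducibly-acting $G^0$ forces $G^0$ to be a torus acting as the maximal torus of $\SU{k+1}$ on $\C^{k+1}$, or one of two exceptional cases excluded by orbifold-point arguments. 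Without an argument of this kind, your enumeration does not terminate.
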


The case $\dim (G)=0$ in the above proposition describes polar 
representations $\rho$. In this case, $\dim (V)$, the rank 
of the corresponding symmetric space, can be arbitrary.   
Thus the second claim of the above proposition appears a bit surprising: 
It is much easier for a representation to have a reduction to a 
representation of a discrete group than to a representation of 
a group of small but positive dimension.

We discuss an interpretation of the last result in terms of \emph{copolarity}.
Recall (cf.~subsection~\ref{copolar} or~\cite{got}) 
that a \emph{generalized section} of  the representation $\rho:H\to\OG W$ 
is a subspace $V$ of $W$ that intersects at least one principal orbit and 
that contains the whole normal space to the orbit at all 
such intersections. Examples of generalized sections are given by
\emph{sections} of polar representations and sets of fixed points $W^K$
of the principal isotropy group $K$.
For any minimal generalized section $\Sigma$ of $\rho$, 
the subgroup $H_{\Sigma} := \{ h\in H | h (\Sigma )= \Sigma \}$ acts on 
the subspace $\Sigma$ with a kernel 
which we quotient out to get the effectivization $\bar H_{\Sigma}$,
and then the inclusion $\Sigma \to W$  induces  
an isometry $\Sigma /\bar H_{\Sigma} \to W/H$.     
Thus we obtain a reduction of $\rho$ to a minimal generalized section.   
For such a minimal generalized section $\Sigma$, the number 
$\dim (\Sigma )- \dim (W/H)$ is called the \emph{copolarity} 
of the representation $\rho$. Due to its minimality, 
the action of $\bar H_{\Sigma}$ has trivial principal isotropy groups, thus
the copolarity of $\rho$ equals the dimension of $\bar H_{\Sigma}$. 
We say that the  representation $\rho :H\to \OG {W} $ has \emph{trivial copolarity} if there are no generalized section but the
whole space $W$.

The following result that has motivated all our investigations is a 
consequence of \tref{verythird}.  
It has been previously obtained in  \cite{got} for~$k=1$ via submanifold theory.

\begin{cor} \label{main}
Let $\rho:H\to\OG{W}$ be a non-reduced
non-polar irreducible   representation of 
a connected compact Lie group $H$. 
If the copolarity~$k$ of the representation~$\rho$
is at most~$6$, then the cohomogeneity of $\rho$ is exactly $k+2$.
\end{cor}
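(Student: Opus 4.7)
The plan is to apply Theorem~\ref{verythird} to the reduction $\bar H_\Sigma\to\OG{\Sigma}$ of $\rho$ induced by a minimal generalized section $\Sigma$, after verifying that this reduction is itself minimal in the quotient-equivalence class of $\rho$.

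Since $\rho$ is non-polar, $\bar H_\Sigma$ cannot be finite --- a finite-group quotient-equivalence would make $\rho$ polar --- so $k=\dim\bar H_\Sigma\geq 1$. The crucial step is to show that $\bar H_\Sigma\to\OG{\Sigma}$ is reduced. I would show first that any generalized section $V\subseteq\Sigma$ of the $\bar H_\Sigma$-action is also a generalized section of the original $H$-action on $W$: principal $\bar H_\Sigma$-orbits in $\Sigma$ lie inside principal $H$-orbits in $W$ (via the isometry $\Sigma/\bar H_\Sigma\to W/H$), and the normal space to the $\bar H_\Sigma$-orbit at $x\in\Sigma$ coincides with the normal space to the $H$-orbit at $x$ (because $\Sigma$ already contains the latter). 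Minimality of $\Sigma$ then forces $V=\Sigma$, so $\bar H_\Sigma\to\OG{\Sigma}$ has no proper generalized section. Invoking the structural result that, for irreducible representations of connected groups, every reduction in the quotient-equivalence class is realized by a generalized section (presumably established in \cite{got} or in an earlier part of the paper), this implies that $\bar H_\Sigma\to\OG{\Sigma}$ is reduced.

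Now Theorem~\ref{verythird} applies to the minimal reduction $\bar H_\Sigma\to\OG{\Sigma}$ with $1\leq k\leq 6$, giving $\bar H_\Sigma^0\cong T^k$ and $\dim\Sigma=2k+2$. Since $\bar H_\Sigma$ has trivial principal isotropy on $\Sigma$, the cohomogeneity of $\rho$ satisfies
\[
\dim(W/H)=\dim(\Sigma/\bar H_\Sigma)=\dim\Sigma-\dim\bar H_\Sigma=(2k+2)-k=k+2,
\]
which is the desired conclusion.

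The main obstacle is the structural assertion that every reduction in the quotient-equivalence class of an irreducible representation of a connected group is realized as a generalized section reduction; this is what allows one to pass from the (relatively easy) ``no proper generalized section'' property of $\bar H_\Sigma\to\OG{\Sigma}$ to its full minimality, so that Theorem~\ref{verythird} may be invoked with the copolarity $k$ playing the role of the torus dimension.
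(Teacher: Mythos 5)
Your reduction of the corollary to \tref{verythird} hinges entirely on the claim that the generalized-section reduction $\bar H_\Sigma\to\OG{\Sigma}$ is \emph{reduced}, and the ``structural result'' you invoke for this --- that every reduction in the quotient-equivalence class of an irreducible representation of a connected group is realized by a generalized section --- is not established in \cite{got} or anywhere in this paper. It is essentially the reverse implication (C3) $\Rightarrow$ (C2) of Question~1.3, which the authors explicitly pose as open; likewise, in \S\ref{copolar} and in the Remark following it they point out that the inequality ``abstract copolarity $\leq$ copolarity'' is a priori strict and that the equality of the two is a \emph{consequence} of Theorems~\ref{verythird} and~\ref{special}, not an input. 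Your first step (any generalized section of the $\bar H_\Sigma$-action on $\Sigma$ is one of the $H$-action on $W$) correctly shows that $\bar H_\Sigma\to\OG\Sigma$ has trivial copolarity, but trivial copolarity does not obviously imply reducedness. If you instead apply \tref{verythird} to an honest minimal reduction $G'\to\OG{V'}$ (which exists and satisfies $1\leq\dim G'\leq k\leq 6$), you only get cohomogeneity $k'+2$ with $k'=\dim G'$ the \emph{abstract} copolarity, and you are left needing $k'=k$ --- which is exactly the unproved point.

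The paper's actual route avoids this circularity: it takes $\tau:G\to\OG V$ to be the reduction to a minimal generalized section (so only trivial copolarity and $\dim G=k\leq6$ are known), proves \pref{insertprop} --- that under these hypotheses $G^0$ must act reducibly --- by a case analysis on the possible groups of dimension at most $6$ acting irreducibly (\S\S\ref{connectedcase}--\ref{discconnectedcase}, using \tref{mainclass} for a few stubborn cases), and then invokes \tref{special}, whose hypotheses require only quotient-equivalence and reducibility of $(H')^0$, not minimality of the reduction. \tref{special} then identifies $G^0$ with the maximal torus $T^k$ of $\SU{k+1}$ on $\C^{k+1}$, giving cohomogeneity $(2k+2)-k=k+2$, and its uniqueness statement is what finally yields copolarity $=$ abstract copolarity as a corollary. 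So your final dimension count is fine, but the bridge you need to reach it is missing and cannot be waved through.
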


The result is sharp: for~$k=0$, i.e., in the case of polar actions, 
the cohomogeneity can be arbitrary.
On the other hand, for~$k=7$, there exists  
a representation of cohomogeneity~$5$ and non-trivial 
copolarity~$7$, see \tref{mainclass} below.

Returning to \tref{verythird}, we see that the restriction of~$\tau$ to~$G^0$ 
is a reducible representation, since $G^0$ is Abelian.
The case-by-case study outlined above would be inefficient in the general
case of reductions to groups whose identity component acts reducibly, 
due to the fact that there are too many reducible representations.  
On the other hand, in this situation the next result can be applied. 
It contains the second claim of \tref{verythird} as a special case and is 
much more general and precise.
For aesthetic and technical reasons we state it in a general form, 
see the  subsequent corollary for the most 
important special case.

\begin{thm}\label{special}
Let $\rho:H\to\OG{W}$ and  $\rho':H'\to\OG{W'}$ be 
quotient-equivalent representations. Assume that the action of the 
identity component $H^0$ on $W$ is irreducible and that 
of $(H')^0$ on $W'$ is reducible. Then there is precisely
one effective 
representation $\tau:G\to\mathbf O(V)$ in the quotient
class of $\rho$ and $\rho'$ which has trivial copolarity.
If this quotient-equivalence class is non-polar,
then the identity component of $G$ is a torus $T^k$ and its action on
$V$ can be identified with that of a maximal torus of $\SU{k+1}$
on $\C^{k+1}$. 
\end{thm}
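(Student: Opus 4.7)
For \emph{existence} of a trivial-copolarity representative, I would take any representative, say $\rho:H\to\OG W$, pick a minimal generalized section $\Sigma\subset W$, and set $V=\Sigma$, $G=\bar H_\Sigma$. The inclusion $V\hookrightarrow W$ induces an isometry of quotients, so $\tau:G\to\OG V$ lies in the class; minimality of $\Sigma$ guarantees that $\tau$ admits no proper generalized section, i.e., has trivial copolarity.

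For \emph{uniqueness}, suppose $\tau_1,\tau_2$ are two effective trivial-copolarity representatives in the class. Both have trivial principal isotropy, so $\dim G_i$ equals the copolarity of the quotient-equivalence class and $\dim V_i$ equals cohomogeneity plus copolarity; in particular the dimensions coincide. Applying the minimal-generalized-section reduction to $\tau_1$ returns $\tau_1$ itself (its only generalized section being $V_1$), and likewise for $\tau_2$. Since this reduction construction produces a representative of each quotient-equivalence class that is well-defined up to orbit-equivalence, $\tau_1$ and $\tau_2$ must be orbit-equivalent.

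For the \emph{structural statement} in the non-polar case, first $G^0\ne\{e\}$, for otherwise the class would contain a representation of a finite group and be polar. Two geometric inputs then feed into the identification of $\tau$. First, the reducibility of $(H')^0$ on $W'$ descends through the reduction $\rho'\rightsquigarrow\tau$ to a $G^0$-invariant decomposition $V=V_1\oplus\cdots\oplus V_m$ with $m\ge 2$: the reduction is realized by restriction to fixed-point subspaces of principal-isotropy-type subgroups, which preserve any $(H')^0$-invariant decomposition of $W'$. Second, by \pref{verysecond} applied to the pair $(\rho,\tau)$ with $H^0$ connected, $G/G^0$ acts on $V/G^0$ as a reflection group; the generating reflections lift to involutions of $\OG V$ normalizing $G^0$, which must permute the summands~$V_i$.

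The principal obstacle is combining these structural constraints with the irreducibility of $\rho$ to pin down the identification with the maximal torus of $\SU{k+1}$. I would argue that irreducibility of $\rho$ forces the Coxeter group $G/G^0$ to act transitively on the summands $V_i$; this transitivity, together with the effectiveness and trivial-copolarity conditions for $\tau$, forces $G^0$ to be an abelian torus of rank $m-1$ with each $V_i$ a complex line carrying a non-trivial weight. A weight-theoretic case analysis, ruling out all alternative torus configurations with a transitively permuted summand decomposition, then identifies $G^0$ with the maximal torus of $\SU{k+1}$ in its standard action on $\C^{k+1}$. The heart of the proof lies in this final weight analysis.
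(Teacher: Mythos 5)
Your existence step matches the paper's: reduce to a minimal generalized section (the paper reduces $\rho'$ specifically, so that the resulting $G^0$ inherits reducibility from $(H')^0$, which you also note). But there are two genuine gaps in the rest.

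First, your uniqueness argument is circular. You assert that the minimal-generalized-section reduction "produces a representative of each quotient-equivalence class that is well-defined up to orbit-equivalence." That is precisely what needs to be proved, and it is not known in general: the paper explicitly poses as an open question whether two reduced quotient-equivalent representations must be orbit-equivalent. Applying the reduction to $\tau_1$ returns $\tau_1$ and to $\tau_2$ returns $\tau_2$; nothing forces these to be orbit-equivalent merely because their quotients are isometric. The paper's actual uniqueness proof has real content: it first shows (Corollary~\ref{finalpoint}, which rests on Lemma~\ref{finallemma} about codimension-one strata of $V/T^+$ meeting at angle $\pi/3$ and Lemma~\ref{isotropy} forcing right angles when principal isotropy is trivial) that no trivial-copolarity representative can have irreducible identity component; it then runs the structure theorem on the second representative and finally invokes Corollary~\ref{conjugate}, which says that finite extensions of the torus $T$ with isometric quotients are conjugate in $N(T)$ --- itself proved by computing the full isometry group of $V/T$.

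Second, in the structural part you claim that transitivity of $G/G^0$ on the summands together with effectiveness and trivial copolarity "forces $G^0$ to be an abelian torus of rank $m-1$ with each $V_i$ a complex line." This is false as a direct deduction. What one gets (Lemma~\ref{keyreducible}) is only that $G^0$ acts with cohomogeneity one on each summand; $G^0$ may a priori be non-abelian, and the paper must work through the cases where the cohomogeneity-one factor is simple, a product of distinct factors, or $\SP1\times\SP1$, as well as the single-isotypical-component case. Even after all that, Proposition~\ref{mainproposition} leaves two exceptional possibilities, $G^0=\U2$ and $G^0=\U1\cdot\SP2$ on the doubled vector representation, which cannot be excluded by these geometric arguments. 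Eliminating them requires the full classification of irreducible representations of cohomogeneity $4$ and $5$ (the entire second part of the paper) together with an orbifold/slice-representation argument. Your proposed "weight-theoretic case analysis" is essentially Proposition~\ref{torus} (the equiangular-lines lemma), but it only applies once $G^0$ is already known to be a torus, which is the hard part you have skipped.
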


As the most important special case needed in \tref{verythird} we obtain:

\begin{cor} \label{reductible}
Let $\rho:H\to\OG{W}$ be a non-reduced
non-polar irreducible representation of a 
connected compact Lie group $H$. Let
$\tau:G \to\OG{V}$ be a minimal reduction of $\rho$. Let $G^0$ be the identity component
of $G$. If $G^0$ acts reducibly on $V$, then $G^0$ 
is a torus $T^k$ and its action on
$V$ can be identified with that of a maximal torus of $\SU{k+1}$
on $\C^{k+1}$. 
\end{cor}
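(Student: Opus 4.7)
The plan is to deduce \cref{reductible} as a direct consequence of \tref{special} applied with $\rho':=\tau$. Since $\rho$ is irreducible under the connected group $H$ (so $H^0$ acts irreducibly), since $\tau$ is quotient-equivalent to $\rho$ by definition of reduction with $G^0$ acting reducibly by hypothesis, and since the quotient-equivalence class is non-polar (as $\rho$ is), the hypotheses of \tref{special} are met. That theorem produces a unique effective representation in the quotient class having trivial copolarity, whose identity component is a torus $T^k$ acting as a maximal torus of $\SU{k+1}$ on $\C^{k+1}$. It therefore suffices to show that this distinguished representation is $\tau$ itself.

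The only non-routine step, and the main point of the argument, is to verify that $\tau$ already has trivial copolarity. Suppose for contradiction that there exists a proper minimal generalized section $\Sigma\subsetneq V$. As recalled in the introduction, the inclusion $\Sigma\hookrightarrow V$ induces an isometry $\Sigma/\bar G_\Sigma \to V/G$, so the action of $\bar G_\Sigma$ on $\Sigma$ is quotient-equivalent to $\tau$ (and hence to $\rho$). By minimality of $\Sigma$ this reduced action has trivial principal isotropy, so
$$\dim \bar G_\Sigma \;=\; \dim\Sigma - \dim(V/G).$$
On the other hand $\tau$ is reduced, hence also has trivial principal isotropy, so $\dim G = \dim V - \dim(V/G)$. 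Combining these identities with $\dim\Sigma < \dim V$ yields $\dim \bar G_\Sigma < \dim G$, which contradicts the minimality of the reduction $\tau$.

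Therefore $\tau$ has trivial copolarity, and by the uniqueness clause of \tref{special} it coincides with the distinguished representation produced there. The asserted description of $G^0$ follows at once. The potentially delicate ingredient here is \tref{special} itself; once it is granted, the corollary reduces to the short dimension-counting argument above identifying minimal reductions with trivial-copolarity representatives of the quotient class.
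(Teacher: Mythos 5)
Your proposal is correct and follows the same route as the paper: the corollary is obtained by applying \tref{special} with $\rho'=\tau$, once one knows that the minimal reduction $\tau$ is the unique trivial-copolarity representative of the class. The dimension count you give for that identification (a proper minimal generalized section would yield a reduction with strictly smaller group, contradicting reducedness) is exactly the step the paper leaves implicit in its remarks that a reduced representation has trivial principal isotropy and that any non-trivial generalized section defines a reduction, and you supply it correctly.
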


We would like to mention the following classes of examples, 
showing that representations described 
by the results above  are quite numerous and interesting:

\begin{ex}
A direct classification due to Straume~\cite{str} shows that
all non-polar irreducible representations of cohomogeneity~$3$   
admit a generalized section of dimension $4$, thus a reduction
to a representation of a one-dimensional group. 
\end{ex}

\begin{ex}
Let $\hat\rho:\hat H\to\OG W$ be the
isotropy representation of an irreducible
Hermitian symmetric space of rank~$r\geq2$,
where $\hat H$ is connected. Then $\hat H=S^1\cdot H$ where 
$H$ is the semisimple factor. Assume that the restriction $\rho$ of 
$\hat\rho$ to $H$ is irreducible and 
not orbit-equivalent to~$\hat\rho$ (there are 
four families of such representations; 
the second assumption is precisely associated to   
noncompact Hermitian symmetric spaces
\emph{of tube type}, see e.g.~\cite[\S9]{clerc}). Then 
$\rho:H\to\OG W$ has non-trivial copolarity
equal to $r-1$, and a minimal generalized section is given by the 
complexification of a section of $\hat\rho$~\cite{got}. Moreover, the  induced action on the minimal section is as described in \cref{reductible}. 
\end{ex}

These examples show that the above are  
indeed statements about 
existing and interesting  objects.  In fact, it turns out
that the irreducible representations characterized in \cref{reductible}
by the means of the special behaviour of their
quotient-equivalence classes are quite remarkable from other geometric 
points of view as well. In a subsequent paper
we will show that such representations can be characterized as 
cohomogeneity one actions on irreducible isoparametric submanifolds 
of the Euclidean space. Moreover, one can classify them and it turns out 
that, up to two exceptions, all such representations can be constructed 
as in the examples above.

The proof of \tref{verythird} (and of \tref{special}) follows the program 
outlined in the beginning of this subsection.
Analyzing the existence of boundary and reflections in the quotient $V/G^0$,  
we can exclude all but very few  representations, all of them
on small dimensional vector spaces.
It is a quite unfortunate but  unavoidable issue that one is left with 
finitely many cases that cannot be excluded directly.
Up to this point one could argue in any dimension $k$ of $G^0$, 
but in higher dimension some of these remaining 
possibilities will indeed give rise to true reductions, as it happens for~$k=7$.
In our case of~$k\leq 6$, the remaining irreducible representations of~$G^0$ 
all have cohomogeneities at most~$5$.
We rule them out by providing in the second part of the paper a complete 
classification of all
irreducible representations of cohomogeneity~$4$ or~$5$. 
This result is also used to exclude two cases
in the proof of \tref{special} that we could not exclude geometrically.    
We think that this classification is of independent interest and may  
be useful for other considerations as well.

\begin{thm}  \label{mainclass}
The non-polar irreducible representations of connected
compact Lie groups of cohomogeneities~$4$ or~$5$
as well as their 
copolarities and presence or not of boundary in the orbit space
are listed in the following two tables.
\[\begin{array}{|c|c|c|c|}
\hline
G & \rho & \textsl{Copolarity} & \textsl{Boundary} \\
\hline
 \SO3  & \R^7 & \textrm{trivial} & \textrm{no} \\
 \U2 & \C^4 & \textrm{trivial} & \textrm{no} \\
\SO3\times\G & \R^3\otimes_{\mathbf R}\R^7 & 2 & \textrm{yes} \\  
\SU3 & S^2\C^3 & 2 & \textrm{yes} \\
\SU6 & \Lambda^2\C^6 & 2 & \textrm{yes} \\ 
\SU3\times\SU3 & \C^3\otimes_{\mathbf C}\C^3 & 2 & \textrm{yes} \\
 \E6 & \C^{27} & 2 & \textrm{yes}\\
\hline  
\end{array} \]
\begin{center}
\textsc{Table 1: Cohomogeneity $4$}\label{cohom4}
\end{center}

\smallskip

\[\begin{array}{|c|c|c|c|}
\hline
G & \rho & \textsl{Copolarity} & \textsl{Boundary} \\
\hline
\SU2 & \C^4 & \textrm{trivial} & \textrm{no} \\
\SO3\times\U2  & \R^3\otimes_{\mathbf R}\R^4 & \textrm{trivial} & \textrm{yes}  \\
 \SU4 & S^2\C^4 & 3  & \textrm{yes} \\
\SU8 & \Lambda^2\C^8 & 3 & \textrm{yes} \\ 
\SU4\times\SU4 & \C^4\otimes_{\mathbf C}\C^4 & 3 & \textrm{yes} \\
\SO4\times\Spin7 & \R^4\otimes_{\mathbf R}\R^8 & 3 & \textrm{yes} \\
  \U3\times\SP2 & \C^3\otimes_{\mathbf C}\C^4 & 7 & \textrm{yes} \\
\hline  
\end{array} \]
\begin{center}
\textsc{Table 2: Cohomogeneity $5$}\label{cohom5}
\end{center}
\end{thm}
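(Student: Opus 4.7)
The plan is to produce the two tables by a systematic enumeration: first reduce to a short list of candidate pairs $(G,\rho)$ using dimension counts, then compute the cohomogeneity of each survivor to isolate those equal to $4$ or $5$, discard the polar ones, and finally read off the copolarity and the presence of boundary for the remaining entries.

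For the dimension bound, observe that if $\rho:G\to\OG V$ is irreducible of cohomogeneity $c\le 5$ with principal isotropy $K$, then
\[
\dim V \;=\; \dim G - \dim K + c \;\le\; \dim G + 5.
\]
Decomposing $G=Z(G)^0\cdot G'$ with $G'$ semisimple and writing $G'$ as a product of compact simple factors, every irreducible real representation of $G$ is obtained as an external tensor product of irreducible representations of the factors, possibly with a realification and a central twist. The bound above forces each simple factor to have small rank and the tensor product to have few factors; in each simple group only a finite (and very short) list of irreducible representations can arise, essentially the defining, $S^2$, $\Lambda^2$, adjoint, spin, and a small number of exceptional ones. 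This mirrors the input used by Straume for cohomogeneities $\le 3$ and is the entry point here.

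Next, for each candidate I would compute the cohomogeneity by the formula above, reading off $\dim K$ either from the slice theorem, from the tables of Hsiang--Lawson, or by direct identification. Removing the candidates with $c\notin\{4,5\}$ leaves a finite list; from this list I would then discard the polar representations using Dadok's classification (equivalently, those orbit-equivalent to the isotropy representation of a symmetric space), which kills the adjoint-type and other classical families. What remains should match exactly the entries of the two tables; confirming this is the bulk of the work, and the main obstacle is the length and bookkeeping of the enumeration, especially when central tori and multiple simple factors combine to produce near-misses of cohomogeneity $3$ or $6$ that must be separated from the true cohomogeneity-$4$ and $-5$ cases.

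Finally, for each entry of the tables the two geometric invariants are extracted as follows. The copolarity equals $\dim V - c$ minus the dimension of the effective action on a minimal generalized section, which is computed either by exhibiting an explicit proper generalized section (for instance, the complexification of a section in the Hermitian-type examples $\SU3$ on $S^2\C^3$, $\SU6$ on $\Lambda^2\C^6$, $\SU3\times\SU3$ on $\C^3\otimes_{\mathbf C}\C^3$, $\E6$ on $\C^{27}$, and the analogous $\SU4,\SU8,\SU4\times\SU4$ entries, whose copolarities coincide with $r-1$ for the rank $r$ of the ambient Hermitian symmetric space as in the example in the introduction) or by verifying that no proper generalized section exists, giving trivial copolarity. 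The boundary is detected by the Alexandrov criterion: $V/G$ has boundary iff some singular stratum has codimension one in $V/G$, which in turn is detected by a slice representation whose quotient has boundary; this reduces to a routine inspection of the isotropy lattice at non-principal orbits. The genuinely subtle entry is the last one, $\U3\times\SP2$ on $\C^3\otimes_{\mathbf C}\C^4$, whose copolarity $7$ requires exhibiting an explicit large generalized section and will be the hardest single case to verify; it is also the example cited in the remark after \cref{main} showing the sharpness of the bound there.
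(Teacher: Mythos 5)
Your overall strategy --- the dimension bound $\dim V\le\dim G+c$, enumeration over simple factors and tensor products, slice computations of the cohomogeneity, and removal of polar cases via the symmetric-space classification --- is exactly the route the paper takes in Part~2, so for the first two columns of the tables the only issue is that the enumeration is deferred rather than carried out. Be aware, though, that the dimension bound alone does not make the tensor-product families finite (for $\SO m\otimes\SO n$ with $m\approx n$ the group dimension grows like $\dim V$); the paper additionally needs the observation $c(\rho)\ge\min\{n_1,n_2\}$, a lemma forcing both tensor factors to have cohomogeneity one, and a monotonicity lemma $c(\rho(n))\le c(\rho(n+1))$ to dispose of infinite families without computing each member. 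You would need these or equivalent devices to keep the bookkeeping you describe actually finite and tractable.

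The genuine gap is in the copolarity column (and, derivatively, in the boundary column, which the paper reads off from the existence or non-existence of reductions via \pref{boundaryexist}). Exhibiting a generalized section only gives an \emph{upper} bound for the copolarity; the tables assert exact values, and the lower bounds are the hard part. For the copolarity-$2$ and $-3$ entries one can lean on the computations in~\cite{got} and~\cite{gt}, but for the ``trivial copolarity'' entries and above all for $\U3\times\SP2$ on $\C^3\otimes_{\mathbf C}\C^4$, your plan of ``exhibiting an explicit large generalized section'' or ``verifying that no proper generalized section exists'' does not say how that verification is to be done, and a direct inspection is not how the paper does it. The paper shows the first two entries of each table have quotients without boundary (the weight-space computations of Section~\ref{connectedcase}) and concludes via \pref{boundaryexist} that they admit no reduction at all; and it proves that the copolarity of the $\U3\times\SP2$ entry is exactly $7$ by showing that its section $\SO3\times\U2$ on $\R^3\otimes_{\mathbf R}\R^4$ has trivial copolarity, which requires the full machinery of Part~1 (\tref{special}, \pref{insertprop} and \cref{finalpoint}), not an explicit section. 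Without an argument of this kind the third and fourth columns remain unproven. (A small slip in your write-up: the copolarity is $\dim\Sigma-c$, i.e.\ the dimension of the group acting effectively on a minimal generalized section $\Sigma$, not ``$\dim V-c$ minus'' anything.)
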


\subsection{The case $k=1$} We would like to explain the general strategy of 
this paper by sketching the proof
of \tref{verythird} in the case of $k=1$.  Thus let us assume  
that $\tau:G \to\OG{V}$ is the minimal reduction
of an irreducible representation $\rho:H \to \OG{W}$ of a 
connected group~$H$.  Assume further that $\dim (G)=1$, i.e.,
that $G^0$ is a circle $\U 1$.  Since $\rho$ is irreducible, so must be 
$\tau$ (\lref{folk}).  
Hence the action of $G$ must act transitively on the set of $G^0$-isotypical components of $V$. Since $G^0$ is a circle
and the action is effective, we deduce that there is only one 
$G^0$-isotypical component and that the representation of $G^0$ is given by the complex
multiplication of $\U 1$ on a complex vector space $V= \C ^l$.  
For $l=1$, the action is polar, thus it is not the minimal reduction. 
Hence $l\geq 2$, and $V/G^0$ is the cone over the complex projective space $\C P^{l-1}$.
Since it does not have boundary, \pref{veryfirst} and \pref{verysecond} imply that
some element in $G/G^0$ acts on $V/G^0$ as a reflection at a subspace of codimension $1$.
Thus we obtain a  reflection at a totally geodesic hypersurface of codimension $1$ in $\C P^{l-1}$.
But such hypersurfaces exist  only  for $l=2$.

\subsection{Questions}
We would like to finish the introduction by formulating some basic questions 
about quotient-equivalence classes, closely related to our results. 

\begin{quest}
Assume that $\rho_i :G_i \to \OG{V_i}$, where $i=1$, $2$,
are effective, reduced and quotient-equivalent.
Is it true that the $\rho _i$ must be orbit-equivalent?
\end{quest}

\begin{quest}
Let the irreducible representation of $G$ on $V$ be reduced. What is the isometry group of the quotient space $V/G$?
Can it be much larger than the $N(G) /G$, where $N(G)$ is the normalizer of 
$G$ in $\OG V$?
\end{quest}

\begin{quest}
For a representation $\rho :G \to \OG V$ of a group $G$ 
consider the following four conditions.
\begin{enumerate}
\item[(C1)] There is an orbit-equivalent action of some group $G'$ 
which has non-trivial principal isotropy groups.
\item[(C2)] The representation has non-trivial copolarity.
\item[(C3)] The action has a non-trivial reduction.
\item[(C4)] The quotient $V/G$ has non-empty boundary.
\end{enumerate}

We have implications (C1) $\Rightarrow$ (C2) $\Rightarrow$ (C3) 
$\Rightarrow$ (C4). We do not know a single representation which satisfies~(C3)
but not~(C1) and only very few representations of
connected groups satisfying~(C4) but 
not~(C1). Are there some  reverse  implications?
\end{quest}

For all representations appearing in our main results the conditions~(C1), 
(C2) and~(C3) are equivalent.

\begin{quest}
What are in  general the relations between cohomogeneity and copolarity?
\end{quest}

\begin{quest}
Is there a general description of all representations $\rho :G \to \OG {V}$ with trivial copolarity 
and non-empty boundary of  $V/G$?
\end{quest}

\subsection{Structure}  
The paper is divided into two parts. 
In the first part we study the geometry of reductions and prove
Theorems~\ref{verythird} and~\ref{special}. After a section on
preliminaries, we investigate orbifold parts of quotient
spaces in section~\ref{secreflections}. We use orbifold fundamental 
groups to reduce the investigation of quotient equivalent actions of 
$G_i$ on $M_i$, $i=1$, $2$, to the case where 
$G_i/G_i ^0$ acts on the subquotient $M_i/G_i ^0$ as a 
\emph{reflection group} (\pref{onereflection}). Moreover,
we find an easy  criterion when one can replace one of the groups 
$G_i$ by its identity component (\pref{liftnoboundary}).  As  special cases we deduce
\pref{verysecond} and \cref{principalcoxeter}. 
In section \ref{sectrivial} we study the restrictions imposed by the 
triviality of the principal isotropy groups on the strata of low 
codimensions. In section \ref{secbasic}, we specialize to representations
 and prove some basic structural results: the invariance of irreducibility 
(\lref{folk}) and  
\pref{veryfirst}. The results from these preparatory sections may be 
useful for other related problems as well.
 
In section \ref{secmain}, the technical heart of the paper, 
we study irreducible representations whose restricted representations 
to the identity component are reducible and prove \pref{mainproposition},
where our special representations of maximal tori $T^k$ of $\SU {k+1} $ 
turn up. In section \ref{secexample}
we apply results of sections~\ref{secreflections}, \ref{sectrivial} 
and~\ref{secbasic}, 
to study the geometry of $\C ^{k+1} /T^k$. In section \ref{secconclusion}, we collect the harvest
and prove \tref{special}.   In section \ref{secsetting}, we start with the proof of \tref{verythird},
which is finished in the two subsequent sections, dealing with the cases of connected, respectively, 
disconnected minimal reduction of our original presentation.

In the final part of the proof of \tref{special} as well as of \tref{verythird}, 
a few cases that we cannot resolve by direct geometric arguments
show up. 
To exclude these few a priori  possible reductions, we
rely on the classification of irreducible representations of low
cohomogeneity (\tref{mainclass}). This theorem is proved independently
in the second part of the paper, by invoking
classifications of representations of simple groups with low cohomogeneity, 
followed by a straightforward but a bit tedious analysis of tensor products 
of representations. In the last section we discuss copolarities of the found
representations.

We wish to thank the referee for his valuable comments and criticism which have
substantially helped improve this paper.

\bigskip

\begin{center}
\large\bf
Part 1. Geometry of reductions

\end{center}

\medskip

\section{Preliminaries}
In this section we collect a few basic results about 
actions of a compact group of isometries~$G$ 
on a connected complete Riemannian manifold~$M$. 
We assume that the actions are effective. 
In the later sections, $M$ will always be either an 
Euclidean space or an Euclidean sphere.   
Let $X$ be the quotient space $M/G$ with the induced quotient metric.
 
\subsection{Stratification}  \label{stratific}
For a point $p\in M$
we denote by $G_p$ its isotropy group,  
and we denote by $St(p)$ the stratum 
of $p$, i.e., the \emph{connected component} 
through $p$ of the set of points
$q\in M$ whose isotropy groups $G_q$ are conjugate to $G_p$. 

Denote by $x$ the image point $x=G\cdot p \in X$.  
The stratum $St(p)$ is a manifold and projects to a 
Riemannian totally geodesic submanifold $St_X (x)$ of the space $X$, 
called a stratum of $X$.
  
The dimension of the stratum $St(p)$ coincides with
the topological dimension of $\dim(G\cdot F)$, where $F$ 
is the connected 
component through $p$ of the set of fixed points  
of $G_p$. We have   $\dim(G\cdot F) = f_p+g-n_p$, where $f_p$ is the dimension 
of $F$, where  $g$ is the dimension of $G$ and $n_p$ is the dimension of the
normalizer $N(G_p)$ of $G_p$ in $G$.    

Locally at a point $p\in M$, the orbit decomposition of 
$M$ is completely determined by the \emph{slice representation}
of the isotropy group $G_p$ on the normal space 
$\mathcal H_p:=N_p(G\cdot p)$ to the orbit $G\cdot p$. 
According to our convention $St(p)$ is connected, therefore the 
equivalence class of the slice representation along $St(p)$ is constant. 
The set of fixed vectors of $G_p$ in $\mathcal H_p$
is tangent to the stratum $St(p)$,
and the action of $G_p$ on 
its orthogonal complement $\mathcal H_p^\dagger$ in $\mathcal H_p$ 
has cohomogeneity $\dim(\mathcal H_p^\dagger/G_p)$, which  
equals to the codimension of the stratum $St_X(x)$ in $X$,
where $x=G\cdot p$.
 
A point $p\in M$ is called \emph{regular} if 
$\mathcal H_p^\dagger$ is trivial. It is called \emph{exceptional} 
if it is not regular and the action of $G_p$ on 
$\mathcal H_p^\dagger$ has discrete orbits. If it is neither
regular nor exceptional, it is called \emph{singular}.
The set $M_{reg}$ of all regular points in $M$ is open and dense,
and $X_{reg}=M_{reg}/G$  is  
connected. $X_{reg}$ is the stratum corresponding to the unique 
conjugacy class of  minimal appearing isotropy groups. 
These isotropy groups are called the \emph{principal isotropy groups}.

\subsection{Boundary} \label{boundary}
The set $X_{reg}$ of regular points of $X$ is exactly the set of points that 
have neighborhoods isometric to Riemannian manifolds.  It is the unique 
maximally dimensional stratum in $X$. 
By definition, the \emph{boundary} of $X$ is the closure of union of all strata 
that have codimension~$1$ in~$X$.
It is denoted by $\partial X$.
A point $p\in M$ is mapped to a stratum of codimension $1$ in $X$ if 
and only if the isotropy group $G_p$ acts
on  $H_p^\dagger$  with cohomogeneity $1$. 
Since boundary points will play a special role in subsequent considerations, 
we are going to call a point $p\in M$ that projects to a 
point on a stratum of codimension $1$ 
in $X$ an \emph{important point}, or, if the action needs to be specified, a
$G$-\emph{important point}.

Let now $G'$ be a normal subgroup of $G$, with finite quotient 
$\Gamma =G/G'$. Then $\Gamma$ acts by isometries
on $X':= M/G'$, and $X=X'/\Gamma$. Since $X$ and $X'$ have the 
same dimension, and since strata of $X'$ are mapped to unions of strata of $X$, we have $\pi (\partial X')  \subset \partial X$. Here, $\pi :X'\to X$ is the canonical projection.  On the other hand, any $G$-important point $p$ 
that is not $G'$-important must be $G'$-regular.  
In this case, we deduce that $G_p /G'_p$ must act on the normal space 
$\mathcal H_p$ as a single reflection at  a 
hyperplane  in $\mathcal H_p$.
  
\subsection{Copolarity and polar actions} \label{copolar}
We refer to \cite{got} for  a detailed discussion of the following notions.
A \emph{generalized section} of  the action of $G$ on $M$ is a 
connected complete totally geodesic submanifold $\Sigma$ whose 
intersection $\Sigma\cap M_{reg}$ with the set of regular points is not 
empty and satisfies $\mathcal H_p\subset T_p \Sigma$ for any 
$p\in \Sigma\cap M_{reg}$. For any minimal generalized section $\Sigma$, the 
group $G_{\Sigma}=\{g \in G | g \Sigma= \Sigma \}$ acts on $\Sigma$ 
with a kernel which we quotient out to get the effectivization
$\bar G_\Sigma$, and then the canonical map $\Sigma/ \bar G_{\Sigma}  \to M/ G$
is an isometry. The minimal dimension of such $\bar G_{\Sigma}$ is called 
the \emph{copolarity} of the action. In addition,
we say that the copolarity  of  the action of $G$ on $M$  is  non-trivial if $\Sigma \neq M$, i.e., if~$G_{\Sigma} \neq G$.
 
If the action of $G$ on $M$ has non-trivial principal isotropy groups then 
a connected component containing regular points
of the set of fixed points of any principal isotropy group is a generalized 
section. Thus such an action has non-trivial copolarity.
  
An
action of another compact Lie group
$G'$ by isometries on another connected complete
Riemannian manifold $M'$ is called \emph{orbit-equivalent}
to the action of $G$ on $M$ if both actions have the same orbits, 
up to isometry. More precisely, they are orbit-equivalent if there 
exists an isometry $F:M\to M'$ such that $F(G(p))=G'(F(p))$ for every
$p\in M$. In this case, we can identify $M$ and $M'$ via $F$ and
view $G$ and $G'$ as subgroups of the isometry group of $M$. 
If $G'$ does not coincide with $G$ inside the isometry group of $M$ 
then the group generated by $G$ and $G'$ is also
orbit equivalent to both actions and has non-trivial principal isotropy 
groups. Thus this group and therefore the actions of $G$ and $G'$ 
have non-trivial copolarity in this case.

An action is called \emph{polar} if it has copolarity $0$, i.e., 
if and only if it admits a generalized section $\Sigma$
with $\dim (\Sigma )= \dim M/G$ 
(in which case $\Sigma $ is called a \emph{section} of the action). 

If $M$ is a Euclidean space $V$, then by the above, 
any non-trivial generalized section defines a reduction.  
We define the \emph{abstract copolarity}
of a representation $\rho :G \to \OG {V}$ to be the dimension  
of the underlying group in a minimal reduction of $\rho$. 
Thus the abstract copolarity  is bounded above by the copolarity.

It is known that an orthogonal representation  $G$ on $V$ is polar 
if and  only if it is orbit equivalent to 
the isotropy representation of a symmetric space~\cite{dad}. 
Another equivalent formulation is that the set of regular points 
$X_{reg}$ of $X$ is flat~\cite{hlo,ale}. 
Therefore, if a representation has a reduction to a discrete group, 
it must be polar. Thus a representation has abstract copolarity 
$0$ if and only if it has copolarity $0$. 

The classification result of \cite{str} implies, in terms of copolarity, that 
any representation of cohomogeneity at most $3$ has copolarity at most $1$.

\begin{rem}  
 The statement of \tref{special}  implies that for all representations satisfying the conditions
of \tref{special}, the copolarity and abstract copolarity coincide.  Due to \tref{verythird}, the equality of copolarity and 
abstract copolarity is also true  for all irreducible 
representations of connected groups if the abstract copolarity is at most $6$.  Looking into the last section of this paper,
one can also deduce that this equality holds true for all representations 
described in  \tref{mainclass}.
\end{rem}

\section{Reflections in quotients} \label{secreflections}
\subsection{Formulation}
Given two isometric quotients~$M_1/G_1$ and~$M_2/G_2$ of 
Riemannian manifolds modulo compact groups of isometries, one  
would like to replace the groups by some smaller subgroups of 
finite index (for instance, by the identity components) preserving 
the property of having isometric quotients.
In this section we are going to prove two  useful criteria for such 
reductions. The first one is quite general:

\begin{prop} \label{liftnoboundary}
Let $M_1$, $M_2$ be simply connected Riemannian manifolds. 
Let $G_i$ be a compact group of isometries of 
$M_i$, for $i=1, 2$. Assume that $M_1/G_1$ and $M_2/G_2$ are isometric. 
If the quotient $M_1/G_1 ^0$ has no boundary,
then for any subgroup $G_2'$ of finite index in $G_2$ there 
is a subgroup $G_1'$ of finite index in $G_1$
such that $M_2/G_2' =M_1/G_1'$.
\end{prop}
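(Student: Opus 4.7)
Plan: Identify $X := M_1/G_1$ with $M_2/G_2$ via the given isometry. For any finite-index subgroup $G_2' \subset G_2$ (which automatically contains $G_2^0$ since $G_2^0$ is connected and $G_2/G_2'$ is finite), set $X_2' := M_2/G_2'$; this is a finite Galois cover of $X$ with deck group $G_2/G_2'$. The plan is to realize $X_2' \to X$ as a cover of the form $M_1/G_1' \to M_1/G_1$ for a suitable finite-index $G_1' \subset G_1$ (which must likewise contain $G_1^0$) by showing that the finite cover is already determined by intrinsic orbifold data on $X$ that can equally be pulled back through $M_1$.

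Let $X^\ast \subset X$ denote the complement of all strata of codimension $\geq 2$. It is open and dense in $X$, and carries a canonical Riemannian orbifold structure whose only non-trivial local isotropies are $\Z_2$-reflections at codimension-one strata. Set $Y_i := M_i/G_i^0$ and $\Gamma_i := G_i/G_i^0$. Since, by hypothesis, $Y_1$ has no boundary, every codimension-one stratum of $X^\ast$ lifts through $Y_1 \to X$ to a codimension-zero stratum of $Y_1$ across which $\Gamma_1$ acts as a single reflection. Removing preimages of the codimension-$\geq 2$ strata from $M_1$ preserves simple connectedness (the removed set has real codimension $\geq 2$), so a Seifert--van Kampen argument on $X^\ast$, combined with the principal-orbit fibration of the $G_1^0$-action over $X_{\mathrm{reg}}$, yields a canonical surjection $\pi_1^{\mathrm{orb}}(X^\ast) \twoheadrightarrow \Gamma_1$ whose kernel classifies the regular cover $Y_1 \to X^\ast$. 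Finite-index subgroups of $G_1$ containing $G_1^0$ then correspond bijectively to finite-index subgroups of $\pi_1^{\mathrm{orb}}(X^\ast)$ containing this kernel.

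The same considerations applied to $M_2$ show that the finite map $X_2' \to X$ restricts over $X^\ast$ to a finite orbifold covering and is classified by some finite-index subgroup $H \subset \pi_1^{\mathrm{orb}}(X^\ast)$ containing the kernel identified above. Let $G_1' \subset G_1$ be the preimage of the image of $H$ under $G_1 \twoheadrightarrow \Gamma_1$; then $G_1^0 \subset G_1'$, $G_1/G_1'$ is finite, and $M_1/G_1'$ is the cover of $X$ classified by $H$ over $X^\ast$. Since $M_1/G_1'$ and $X_2'$ are complete Alexandrov spaces that agree as finite orbifold covers over the dense open $X^\ast$, they are isometric, so $M_1/G_1' = M_2/G_2'$ as required. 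The main obstacle is the identification $\pi_1^{\mathrm{orb}}(X^\ast) \twoheadrightarrow \Gamma_1$: the no-boundary hypothesis on $Y_1$ is essential here, as it is precisely what prevents $\pi_1^{\mathrm{orb}}(X^\ast)$ from acquiring extra reflection generators from codimension-one $G_1^0$-isotropies, generators that would not a priori be matched by any structure on the other presentation $M_2/G_2$.
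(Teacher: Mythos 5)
Your overall strategy is the same as the paper's: classify the intermediate quotients between $M_i/G_i^0$ and $X$ as orbifold coverings of a large open subset of $X$, and transfer the covering corresponding to $G_2'$ to the $M_1$-side. However, there is a genuine gap at the decisive step, and it is created by your choice of the open subset. You work with $X^\ast$, the complement of \emph{all} strata of codimension $\geq 2$, whereas the paper works with the orbifold part $X_{orb}$, which \emph{contains} every stratum of codimension $\leq 2$. This matters twice. First, your justification that the preimage of $X^\ast$ in $M_1$ is simply connected is wrong: removing a set of real codimension $2$ can destroy simple connectedness (already $\R^2\setminus\{0\}$, or $\Z_4$ rotating $\R^2$, where the preimage of $X^\ast$ is $\R^2\setminus\{0\}$). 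Consequently the kernel $K_1$ of your surjection $\pi_1^{orb}(X^\ast)\twoheadrightarrow\Gamma_1$, which is $\pi_1^{orb}$ of the preimage of $X^\ast$ in $M_1/G_1^0$, need not be trivial: by excising the codimension-$2$ strata you have discarded exactly the relations that would kill it.

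Second, and this is the heart of the matter: your construction of $G_1'$ as the preimage of the image of $H$ in $\Gamma_1$ produces the covering classified by $H\cdot K_1$, so you need $K_1\subset H$. You assert that $H$ ``contains the kernel identified above,'' but the ``same considerations applied to $M_2$'' only show that $H$ contains the kernel $K_2$ of the analogous surjection $\pi_1^{orb}(X^\ast)\twoheadrightarrow\Gamma_2$ built from the second presentation, and there is no a priori relation between $K_1$ and $K_2$. This containment is precisely where the no-boundary hypothesis must be used quantitatively, not just to produce reflection generators. The paper closes the gap by working with $X_{orb}$ and invoking \lref{simplycon}: since $M_1$ is simply connected, $G_1^0$ is connected, and $M_1/G_1^0$ has no boundary, the orbifold $(M_1/G_1^0)_{orb}$ has \emph{trivial} orbifold fundamental group, i.e., it is the universal orbifold covering of $X_{orb}$. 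Then the analogue of your $K_1$ is trivial, every orbifold covering of $X_{orb}$ — in particular $(M_2/G_2')_{orb}$ — automatically lies between $(M_1/G_1^0)_{orb}$ and $X_{orb}$, and hence is of the form $(M_1/G_1')_{orb}$; one finishes by extending the isometry from the dense convex subset $X_{orb}$ to the completions, as you do. To repair your argument you should either replace $X^\ast$ by $X_{orb}$ and prove (or cite) the triviality of $\pi_1^{orb}((M_1/G_1^0)_{orb})$, or give an independent argument that every element of $K_1$ lies in $H$.
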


To formulate our criterion in the presence of boundaries  
we need a definition. A \emph{reflection} on a Riemannian manifold 
is an isometry  whose set of fixed points
has  codimension $1$.  Let $X=M/G$ be a quotient space. 
An isometry of $X$ is called a \emph{reflection} on $X$ if 
its restriction to the regular part $X_{reg}$
is a reflection. A discrete group of isometries of $X$
that is generated by reflections is called a \emph{reflection group} 
on $X$.  We refer to \cite{AKLM}
for more about reflection groups on manifolds.

The second result of this section is the following:
\begin{prop}  \label{onereflection}
Let $G_i$ be a compact group of isometries of a simply connected 
Riemannian manifold $M_i$ for $i=1,2$.
Assume that $M_1/G_1$ and $M_2/G_2$ are isometric. 
Then there are normal subgroups of finite index $G_i^+$ in $G_i$ 
such that the group $G_i^+/G_i^0$  acts on the quotient $M_i/G_i^0$ as a 
reflection group, $i=1$, $2$,  
and $M_1/G_1 ^+$ and $M_2/G_2 ^+$ are isometric. Moreover, the image of $G_i ^+ /G_i ^0$ in the 
isometry group of $M_i /G_i ^0$ is a Coxeter group. 
In particular if, say, $G_1$ is connected,
we can take $G_i^+=G_i$ for $i=1$, $2$ and then $G_2/G_2^0$ acts on
$M_2/G_2^0$ as a reflection group. 
\end{prop}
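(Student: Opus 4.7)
The approach is to split the proof into two nearly independent pieces. First, inside $\Gamma_i := G_i/G_i^0$, I isolate the subgroup $R_i$ generated by those components that act on $Y_i := M_i/G_i^0$ by reflections, and use \cite{AKLM} to identify it as a Coxeter group. Second, via orbifold covering theory together with the simple-connectedness of $M_i$, I show that quotienting both sides by exactly that subgroup produces the same intrinsic orbifold cover of $X := M_i/G_i$, which yields the claimed isometry. The construction of $G_i^+$ and the Coxeter structure are essentially formal; the technical core is the isometry $M_1/G_1^+ \cong M_2/G_2^+$, whose proof rests on an orbifold analogue of Armstrong's theorem.

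\textbf{Construction of $G_i^+$.} I call $\gamma \in \Gamma_i$ a reflection if its fixed-point set meets $Y_{i,reg}$ in a codimension-one submanifold, and set $R_i \subseteq \Gamma_i$ to be the subgroup generated by all reflections. Conjugation in $\Gamma_i$ sends a reflection to the reflection across its conjugated hypersurface, so $R_i$ is normal. Then $G_i^+$ is defined as the preimage of $R_i$ in $G_i$: a normal subgroup of finite index containing $G_i^0$ with $G_i^+/G_i^0 = R_i$. Since the image of $R_i$ in $\operatorname{Isom}(Y_i)$ is a discrete group of isometries of a Riemannian manifold generated by reflections, \cite{AKLM} gives that it is a Coxeter group, establishing the third assertion.

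\textbf{Isometry $M_1/G_1^+ \cong M_2/G_2^+$.} Set $Z_i := M_i/G_i^+ = Y_i/R_i$. I would first verify that the finite deck group $\Gamma_i/R_i$ of the projection $\pi_i : Z_i \to X$ contains no reflection on $Z_i$: if $\bar\gamma \neq 1$ fixed a codimension-one submanifold on the manifold part of $Z_i$, then lifting a generic point to $y \in Y_{i,reg}$ would give $\gamma \cdot y = r \cdot y$ with $r \in R_i$ locally constant, whence $r^{-1}\gamma$ is a reflection on $Y_i$, forcing $\gamma \in R_i$ and $\bar\gamma = 1$, a contradiction. By the boundary analysis of subsection \ref{boundary}, this shows $\pi_i$ identifies every codimension-one stratum of $X$ isometrically with a codimension-one stratum of $Z_i$, branching only in codimension $\geq 2$; that is, $\pi_i$ is an orbifold cover that unfolds no codimension-one stratum. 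The key step is to show that both $Z_i$ coincide with one common intrinsic orbifold cover of $X$. For this I would invoke the orbifold Armstrong theorem: since $M_i$ is simply connected and $G_i^0$ is a connected compact Lie group, $\pi_1^{\mathrm{orb}}(Y_i) = 1$, whence $Y_i \to X$ realizes $Y_i$ as the orbifold universal cover and $\pi_1^{\mathrm{orb}}(X) \cong \Gamma_i$. Under this isomorphism $R_i$ is the normal subgroup of $\pi_1^{\mathrm{orb}}(X)$ generated by reflection conjugacy classes, a subgroup intrinsic to the metric space $X$; hence $Z_1$ and $Z_2$ are both the orbifold cover of $X$ associated to this common subgroup and are therefore canonically isometric.

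\textbf{Special case and main obstacle.} The last assertion is immediate: if $G_1$ is connected then $\Gamma_1 = R_1 = 1$, so $Z_1 = X$, and the isometry forces $Z_2 = X$, i.e., $G_2^+ = G_2$ and $\Gamma_2 = R_2$ acts on $Y_2$ as a reflection group. The principal technical difficulty lies in the orbifold Armstrong theorem $\pi_1^{\mathrm{orb}}(Y_i) = 1$. The classical result handles the topological $\pi_1(Y_i)$, but in the orbifold setting one must additionally verify that loops encircling codimension-$\geq 2$ isotropy strata in the orbifold part of $Y_i$ are trivial in $\pi_1^{\mathrm{orb}}$; this is the step requiring the most attention, and it can be handled by lifting such loops to $M_i$ and exploiting the connectedness of the isotropy groups that appear.
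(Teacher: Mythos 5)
Your overall architecture (reflection subgroups of the component groups, orbifold covering theory over $X$, a Coxeter structure theorem in the style of \cite{AKLM}) matches the paper's, but the pivot of your argument is false. You claim an ``orbifold Armstrong theorem'' $\pi_1^{orb}(Y_i)=1$ for $Y_i=M_i/G_i^0$ and deduce that $Y_i\to X$ is the universal orbifold covering with $\pi_1^{orb}(X)\cong\Gamma_i$. This fails whenever $M_i/G_i^0$ has non-empty boundary, which is precisely the interesting case for this proposition: already for $M=S^2$ and $G=\SO2$ the quotient is a mirror interval, whose orbifold fundamental group is infinite dihedral. (There is also the preliminary point that $M_i/G_i^0$ need not be an orbifold at all; only its orbifold part $(M_i/G_i^0)_{orb}$ is, which is why the paper works there and recovers the isometry of the full quotients afterwards by density and convexity of the orbifold parts.) The correct statement, \lref{simplycon}, is that $(M_i/G_i^0)_{orb}$ \emph{minus its boundary} has trivial orbifold fundamental group; combined with \lref{keyreflection} this gives that $\Gamma^i:=\pi_1^{orb}\bigl((M_i/G_i^0)_{orb}\bigr)$ is \emph{generated by reflections}, not trivial. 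That is the missing ingredient: it is exactly what guarantees that the intrinsic reflection subgroup $\Gamma_{refl}$ of $\pi_1^{orb}(X_{orb})$ contains $\Gamma^i$, so that the common cover $(X_{orb})_{refl}$ (the universal orbifold cover of $X_{orb}$ modulo $\Gamma_{refl}$) is a quotient of each $(M_i/G_i^0)_{orb}$ and hence equals $(M_i/G_i^+)_{orb}$ for a finite-index subgroup $G_i^+$. Without it, your $R_i\subset\Gamma_i$ has no a priori identification with a subgroup of $\pi_1^{orb}(X_{orb})$ intrinsic to $X$, and the central isometry $Z_1\cong Z_2$ remains unproved.

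The same confusion affects your Coxeter step: $Y_i$ is neither a Riemannian manifold nor simply connected, so \cite{AKLM} does not apply directly to the image of $R_i$ in the isometry group of $Y_i$. The paper instead observes that $G_i^+/G_i^0$ acts as a reflection group on $(M_i/G_i^0)_{orb}\setminus\partial\bigl((M_i/G_i^0)_{orb}\bigr)$, which \emph{does} have trivial orbifold fundamental group by \lref{simplycon}, and then applies the orbifold version of the Coxeter statement (\lref{lemcoxeter}). Relatedly, your closing paragraph misplaces the difficulty: the loops that obstruct $\pi_1^{orb}(Y_i)=1$ are not those around codimension-two strata (those do die, by \lref{simplycon}) but the order-two reflection generators attached to the codimension-one (boundary) strata, and the fix is to remove the boundary rather than to kill these classes.
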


To prove the results we are going to study more closely a 
slightly larger part of the quotient $X$ than $X_{reg}$, 
its \emph{orbifold part} $X_{orb}$. 
  
\subsection{Riemannian orbifolds and their fundamental groups}\label{fundgroups}

We are going to use a bit about orbifolds and orbifold fundamental groups.  
We refer the reader to the notes by Thurston~\cite{thu} 
and to those by Davis~\cite{Davis}.

A \emph{Riemannian orbifold} is a metric space $C$ where each point has a neighborhood isometric to a finite quotient of a 
smooth Riemannian manifold. Since it is locally represented as  a quotient in a unique manner, it comes along with a natural
stratification and a unique underlying structure of a smooth orbifold.

Let $C$ be a connected Riemannian orbifold.
Let $\pi _1 ^{orb} (C)$ denote the \emph{orbifold fundamental group} 
of $C$ (cf. \cite{Davis}).
This group acts as a group of discrete isometries on a 
connected Riemannian orbifold $\tilde C$,
called the \emph{universal orbifold covering} of $C$, 
such that $\tilde C / \pi _1 ^{orb} (C) =C$.
For any other presentation of the orbifold $C$ as a quotient $C=B/\Gamma$ 
of a connected Riemannian orbifold
$B$ modulo a discrete group of isometries $\Gamma$, there is an 
(essentially unique) normal subgroup $\Gamma '$
of $\pi _1^{orb}(C)$, such that $\tilde C /\Gamma ' =B$ 
and such that $\Gamma = \pi _1 ^{orb}(C) /\Gamma '$.

A Riemannian orbifold $C$ has a trivial orbifold fundamental group 
if and only if it coincides with its universal orbifold covering 
$\tilde C$. In this case $C$ has no boundary and it is simply 
connected as a topological space. 
In general, one can write a presentation of the orbifold 
fundamental group of $C$
in terms of its usual fundamental group and its strata of 
codimension $1$ and $2$~\cite{Davis}.
Looking at  this presentation of the orbifold fundamental group one 
notes that the embedding $C\setminus \partial C \to C$ induces an injection $\pi _1 ^{orb} (C\setminus \partial C) \to 
\pi _1 ^{orb} (C)$.

If $\pi _1 ^{orb} (C \setminus \partial C) =1$  
(essentially, the only case that will be of importance in the 
sequel, see  \lref{simplycon}) then the presentation 
of~$\pi _1 ^{orb}(C)$ has the following simple form.
For any stratum  $\Sigma$ of codimension one in $C$ one takes a 
generator $w_{\Sigma}$ that has by definition order $2$.  
Whenever codimension one strata $\Sigma _1$, $\Sigma _2$ 
meet at a stratum $P$ of codimension $2$,
(i.e.~$P \subset \bar \Sigma _1 \cap \bar \Sigma _2$) one adds a relation 
$(w_{\Sigma _1} \cdot w_{\Sigma _2}) ^ {m(P)} =1$, where $m(P)$ is 
the natural number defined such $\Sigma _1 $
and $\Sigma _2$ meet at $P$ at the angle $\pi /m(P)$.  
In particular, $\pi _1 ^{orb} (C)$ is a Coxeter group.
If all occurring angles $\pi/ m(P)$ are equal to~$\pi /2$, 
then the group $\pi _1 ^{orb}(C)$ must be Abelian or infinite.



\subsection{Reflections in orbifolds}
Let $C$ be a connected Riemannian orbifold.
For a discrete group $\Gamma$ of isometries of $C$ we denote by 
$\Gamma _{refl}$
the subgroup of $\Gamma$ that is generated by reflections of $C$ that 
are contained in
$\Gamma$.  Since a conjugate of a reflection is a reflection, 
$\Gamma _{refl}$
is a normal subgroup of $\Gamma$.

The following lemma will provide us with sufficiently many reflections:

\begin{lem} \label{keyreflection}
Let $B$ be a Riemannian orbifold. 
Let $\Gamma$ be a discrete group of isometries of
$B$. Let $C$ be the orbifold $C=B/\Gamma$.  Set $C_0=C\setminus \partial C$. 
If the orbifold fundamental group of $C_0$ is trivial, then  
$\Gamma$ is a reflection group on $B$. 
\end{lem}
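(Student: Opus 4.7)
The strategy is to combine two presentations of the orbifold $C$: on the one hand as the quotient $B/\Gamma$, and on the other via its universal orbifold cover. Assuming (as we may, by passing to a component) that $B$ is connected, standard orbifold covering theory applied to the quotient $B \to C = B/\Gamma$ yields a short exact sequence
\[
1 \to \pi_1^{orb}(B) \to \pi_1^{orb}(C) \to \Gamma \to 1,
\]
so $\Gamma$ is identified with $\pi_1^{orb}(C)/\pi_1^{orb}(B)$. It therefore suffices to exhibit a generating set for $\pi_1^{orb}(C)$ whose image in $\Gamma$ consists of reflections (together with the identity).

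Under the standing hypothesis $\pi_1^{orb}(C_0)=1$, subsection~\ref{fundgroups} provides exactly such a generating set: the Coxeter-type presentation of $\pi_1^{orb}(C)$ gives one generator $w_\Sigma$ of order two for every codimension-one stratum $\Sigma \subset \partial C$, where $w_\Sigma$ is by construction represented by a reflection at the lift of $\Sigma$ in the universal orbifold cover $\tilde C$.

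The core step is then to check that the image of each $w_\Sigma$ in $\Gamma$ is either trivial or a reflection of $B$. This is a purely local question at a generic point $p\in\Sigma$. Locally the orbifold $C$ is modeled on $U/(H\rtimes \Gamma_{\bar p})$, where $U$ is a Euclidean slice, $H$ is the local isotropy group encoding the orbifold structure of $B$ at some lift $\bar p\in B$, and $\Gamma_{\bar p}$ is the stabilizer of $\bar p$ in $\Gamma$. For $p$ to lie on a codimension-one stratum of $C$, the group $H\rtimes\Gamma_{\bar p}$ must contain a reflection on $U$, and this reflection either belongs to $H$—so that $w_\Sigma$ is already realized by a reflection loop in $B$ and lies in $\pi_1^{orb}(B)$, hence maps to $1$ in $\Gamma$—or to $\Gamma_{\bar p}$—so that $w_\Sigma$ maps to this element, a reflection of $B$ contained in $\Gamma$. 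Collecting these images over all $\Sigma$ shows that $\Gamma = \Gamma_{refl}$.

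The main technical obstacle is precisely the local identification in the previous paragraph: one must verify that codimension-one strata of $B/\Gamma$ arise only in these two ways and that the loop representing $w_\Sigma$ in $\pi_1^{orb}(C)$ really transports, under $\pi_1^{orb}(C)\to\Gamma$, to the reflection described by the local model. This is a routine but careful computation with slice representations and a consistent choice of basepoints and paths connecting generic regular points of the various $\Sigma$; once it is done, the Coxeter presentation immediately finishes the proof.
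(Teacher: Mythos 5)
Your argument is correct, but it is not the route the paper takes. The paper quotients out first: it sets $B'=B/\Gamma_{refl}$ with residual group $\Gamma'=\Gamma/\Gamma_{refl}$, observes that $\Gamma'$ cannot contain a reflection of $B'$ (such a reflection would produce a point of $B$ lying over a codimension-one stratum of $C$ whose local $\mathbf Z_2$ comes from a reflection in $\Gamma\setminus\Gamma_{refl}$, a contradiction), concludes that the preimage of $C_0$ in $B'$ is exactly $B'_0=B'\setminus\partial B'$, so that $C_0=B'_0/\Gamma'$, and then uses $\pi_1^{orb}(C_0)=1$ to force $\Gamma'=1$; it never touches the Coxeter presentation. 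You instead use the exact sequence $1\to\pi_1^{orb}(B)\to\pi_1^{orb}(C)\to\Gamma\to1$ together with the explicit presentation of $\pi_1^{orb}(C)$ by the involutions $w_\Sigma$ (which subsection~\ref{fundgroups} makes available precisely under the hypothesis $\pi_1^{orb}(C_0)=1$), and check that each generator maps to a reflection or to the identity. Both proofs hinge on the same local fact at a generic point of a codimension-one stratum of $C$ (the slice group there is $\mathbf Z_2$, realized either inside the local group of $B$ or by an element of $\Gamma$), so your ``main technical obstacle'' is no worse than what the paper itself uses; note also that it can be bypassed entirely: $w_\Sigma$ acts on $\tilde C$ as a reflection fixing a codimension-one lift of $\Sigma$, so its image in $\Gamma$ fixes pointwise the codimension-one projection of that set to $B$ and is therefore trivial or a reflection, with no local model required. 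Two small imprecisions: the local group of $C$ at $p$ is an extension of $\Gamma_{\bar p}$ by $H$, not in general a semidirect product (harmless, since at a generic stratum point it is just $\mathbf Z_2$); and ``passing to a component'' of $B$ is not really available if $\Gamma$ permutes components --- one should simply assume $B$ connected, as the paper implicitly does. Your approach costs the extra input of the Coxeter presentation but returns slightly more, namely an explicit generating set of reflections for $\Gamma$; the paper's version is leaner and reserves the presentation for Lemma~\ref{lemcoxeter}.
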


\begin{proof}
Denote by $\Gamma '$ the quotient group $\Gamma '= \Gamma / \Gamma _{refl}$.
Then $\Gamma '$ acts by isometries on $B' :=B/\Gamma_{refl}$ 
such that $B'/\Gamma ' =C$.
If an element $w$ in $\Gamma '$ acts as a reflection on $B'$, 
then we can find a manifold point $p$ in $B$ that is projected 
to a manifold point in $B'$,  but  whose projection to $C$ lies on  
 a stratum of codimension $1$ in $C$. Thus this point $p$ must be fixed 
by a reflection in $\Gamma$ 
that is not in $\Gamma _{refl}$, providing a contradiction.

Hence $\Gamma '$ does not contain reflections of $B'$.  Therefore, the 
projection $B' \to C$  has the property that the preimage of a boundary 
point in $C$ is a boundary point in $B'$. Hence the preimage
of $C_0$ is exactly $B'_0 = B'\setminus \partial (B')$, i.e., a connected 
orbifold.
Hence $C_0 = B ' _0 / \Gamma '$. 
Since $\pi _1 ^{orb} (C_0)=1$,  the group  $\Gamma '$
acts trivially on $B'$. Thus $\Gamma = \Gamma _{refl}$.
\end{proof}

Consider now the action of $\pi _1 ^{orb} (C)$ on the universal covering 
$\tilde C$ of $C$.
We obtain the reflection group $\pi_1 ^{orb}(C)_{refl}=:\pi_{1,refl}^{orb}(C)$
of isometries of $\tilde C$.   
The quotient $\tilde C / \pi _{1,refl}^{orb}(C)$  
is a Riemannian orbifold, which will be denoted by 
$C_{refl}$. The quotient group $\pi_{1,nonrefl}^{orb}(C) : 
=\pi _1 ^{orb}(C) /\pi _{1,refl}^{orb}(C)$ acts on $C_{refl}$
with $C_{refl} /\pi _{1,nonrefl}^{orb}(C) = C$. By construction, $C_{refl}$ 
is the unique minimal orbifold covering of $C$
with property that its orbifold fundamental group is generated by 
reflections.

The following observation is  probably well known to the experts.
We include it here with a sketchy proof for the sake of completeness:

\begin{prop} \label{sketchy}
In the notations above we have 
$\pi _{1, nonrefl} ^{orb}(C) = \pi _1 ^{orb} (C \setminus  \partial C)$.  
Thus for any Riemannian orbifold there is a split exact sequence:
$$1\to \pi _{1, refl} ^{orb} (C) \to  \pi _1 ^{orb}  (C) \to \pi _1 ^{orb} (C\setminus \partial C) \to 1$$
\end{prop}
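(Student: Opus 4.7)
The plan is to realize the cover $C_{refl}\to C$ as a well-behaved intermediate cover and to show that, after removing the boundary, it becomes the universal orbifold cover of $C_0 := C\setminus\partial C$. This directly gives the identification $\pi_{1,nonrefl}^{orb}(C) = \pi_1^{orb}(C_0)$, whence the short exact sequence in the statement; the splitting will be furnished by the injection $\iota: \pi_1^{orb}(C_0)\hookrightarrow\pi_1^{orb}(C)$ recalled in Subsection \ref{fundgroups}.

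First I observe that $\pi_{1,refl}^{orb}(C)$ is normal in $\pi_1^{orb}(C)$, since any conjugate of a reflection is again a reflection; this produces the factorization of orbifold covers $\tilde C \to C_{refl} \to C$, on which $\pi_{1,refl}^{orb}(C)$ acts between the first two spaces and $\pi_{1,nonrefl}^{orb}(C)$ acts between the last two. Repeating verbatim the argument from the proof of \lref{keyreflection}, the induced action of $\pi_{1,nonrefl}^{orb}(C)$ on $C_{refl}$ contains no reflections (otherwise a manifold point on such a fixed hypersurface would lift to a point of $\tilde C$ fixed by an element of $\pi_1^{orb}(C)$ acting as a reflection but lying outside $\pi_{1,refl}^{orb}(C)$, contradicting the definition of the latter). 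Consequently $\partial C_{refl}$ is exactly the preimage of $\partial C$, so that $(C_{refl})_0 := C_{refl}\setminus\partial C_{refl}$ projects onto $C_0$ as an orbifold cover with deck group $\pi_{1,nonrefl}^{orb}(C)$.

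The crucial step is to prove that $(C_{refl})_0$ is orbifold-simply-connected. Using the injection $\pi_1^{orb}((C_{refl})_0)\hookrightarrow\pi_1^{orb}(C_{refl}) = \pi_{1,refl}^{orb}(C)$ from Subsection \ref{fundgroups}, I reduce this to showing that any orbifold loop $\gamma$ in $(C_{refl})_0$ lifts to a closed loop in $\tilde C$. For such $\gamma$, the lift $\tilde\gamma$ avoids every reflection wall of the reflection group $\pi_{1,refl}^{orb}(C)$ acting on $\tilde C$ (since $\gamma$ avoids $\partial C_{refl}$); a standard chamber argument then forces $\tilde\gamma$ to remain inside a single chamber of this group, so $\tilde\gamma$ closes up at its starting point. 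This identifies $(C_{refl})_0\to C_0$ with the universal orbifold cover, yielding $\pi_1^{orb}(C_0) = \pi_{1,nonrefl}^{orb}(C)$. Tracking lifts of an orbifold loop in $C_0$ to $\tilde C$ versus to $C_{refl}$ shows that the composition of $\iota$ with the quotient map $\pi_1^{orb}(C)\twoheadrightarrow\pi_{1,nonrefl}^{orb}(C)$ coincides with this identification, so $\iota$ supplies the required section.

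The main obstacle I expect is the chamber argument in the third paragraph: it requires the reflections in $\pi_{1,refl}^{orb}(C)$ to constitute a locally finite family of walls in $\tilde C$ with a well-defined chamber decomposition. This is standard for discrete reflection groups on smooth Riemannian manifolds, but in the generality of an arbitrary Riemannian orbifold needs a small amount of technical justification, which presumably explains the authors' choice to present only a sketchy proof.
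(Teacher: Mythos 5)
Your first two paragraphs coincide with the paper's: normality of $\pi_{1,refl}^{orb}(C)$, the absence of reflections in the induced action of $\pi_{1,nonrefl}^{orb}(C)$ on $C_{refl}$ (by the argument of \lref{keyreflection}), and the identification of the preimage of $C\setminus\partial C$ with $C_{refl}\setminus\partial C_{refl}$ are exactly the paper's first step, and the reduction of everything to the claim that $\pi_1^{orb}(C_{refl}\setminus\partial C_{refl})=1$ is also the same. Where you diverge is in how that claim is proved. The paper argues combinatorially: by Davis's presentation of $\pi_1^{orb}$ of an orbifold in terms of the strata, the normal closure of the codimension-one boundary generators is the subgroup generated by all reflections, and the quotient by it is $\pi_1^{orb}$ of the interior; since $\pi_1^{orb}(C_{refl})$ is by construction generated by reflections, this quotient is trivial. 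You instead argue geometrically in $\tilde C$: a loop in $(C_{refl})_0$ lifts to a path avoiding all walls, hence confined to one chamber, hence closed. Your route is correct in outline and arguably more geometric, but note that the step you pass over with ``so $\tilde\gamma$ closes up'' is the real content: you need that the deck transformation carrying the initial point of the lift to its endpoint, which preserves the chamber, must be the identity --- i.e., that chambers of a reflection group on the simply connected orbifold $\tilde C$ have trivial stabilizers (equivalently, that a closed chamber is a strict fundamental domain). This is the orbifold analogue of the theorem of \cite{AKLM}, and it is precisely the input that the paper's presentation-based argument (and its \lref{lemcoxeter}) is designed to avoid invoking directly; by contrast, the local finiteness of the wall family that you single out as the main technical obstacle is comparatively harmless (it follows from proper discontinuity). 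So: same skeleton, genuinely different key step; the paper's version trades your geometric chamber theory for a purely group-theoretic reading of the stratified presentation, at the cost of being sketchier about why killing the boundary generators yields $\pi_1^{orb}$ of the interior.
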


\begin{proof}
The proof of \lref{keyreflection} shows that $\pi _{1, nonrefl} ^{orb} (C)$ 
does not contain reflections
of $C_{refl}$. Moreover, for the projection from $C_{refl}$ to $C$, the 
preimage of $C\setminus \partial C$
is the connected Riemannian orbifold $C_{refl} \setminus \partial C_{refl}$.  

Now, by definition, the orbifold fundamental group of $C_{refl}$ is 
generated by reflections. Looking at the presentation of the 
orbifold fundamental group in terms of the strata~\cite{Davis}, 
one deduces that $C_{refl} \setminus \partial C_{refl}$ has trivial orbifold fundamental group.
Hence $\pi _{1,nonrefl}  ^{orb}(C)$ acts on the simply connected orbifold $C_{refl} \setminus \partial C_{refl}$
with quotient orbifold identified with $C\setminus \partial C$. This implies the result.
\end{proof}

The following result has been proven for Riemannian manifolds in~\cite{AKLM}.  
Instead of the proof given below, 
one could obtain the proof for orbifolds along the same lines as in there.

\begin{lem} \label{lemcoxeter}
Let $B$ be a Riemannian orbifold with trivial orbifold fundamental group.  Let $\Gamma$ be a reflection group
on $B$. Then $\Gamma$ is a Coxeter group.
\end{lem}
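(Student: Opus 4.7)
The plan is to reduce the statement to the explicit presentation of the orbifold fundamental group recalled in subsection~\ref{fundgroups}, via the split exact sequence of \pref{sketchy}.

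Set $C := B/\Gamma$. Since $B$ has trivial orbifold fundamental group, $B$ is the universal orbifold covering of $C$, and the deck transformation group gives the identification $\pi_1^{orb}(C) = \Gamma$, with $\Gamma$ acting on $\tilde C = B$ in the given way. Under this identification, an element of $\pi_1^{orb}(C)$ is a reflection (in the sense used to define $\pi_{1,refl}^{orb}(C)$) precisely when the corresponding element of $\Gamma$ is a reflection of $B$. By hypothesis $\Gamma$ is generated by reflections of $B$, so
\[
\pi_{1,refl}^{orb}(C) \;=\; \Gamma \;=\; \pi_1^{orb}(C).
\]

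Next I would invoke \pref{sketchy}, which gives the split exact sequence
\[
1 \to \pi_{1,refl}^{orb}(C) \to \pi_1^{orb}(C) \to \pi_1^{orb}(C\setminus\partial C) \to 1.
\]
Combined with the previous equality, this forces $\pi_1^{orb}(C\setminus\partial C) = 1$.

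Now I would appeal to the presentation of $\pi_1^{orb}(C)$ described in subsection~\ref{fundgroups} that is available exactly when $\pi_1^{orb}(C\setminus\partial C) = 1$: one takes a generator $w_\Sigma$ of order $2$ for each codimension-one stratum $\Sigma$ of $C$, and imposes a relation $(w_{\Sigma_1}\!\cdot\! w_{\Sigma_2})^{m(P)} = 1$ whenever two such strata meet along a codimension-two stratum $P$ at angle $\pi/m(P)$. This is a Coxeter presentation by definition, so $\Gamma = \pi_1^{orb}(C)$ is a Coxeter group.

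The only delicate point is the first step: matching the two a priori different meanings of the word \emph{reflection} (as an isometry of the orbifold $B$, and as an element of the deck transformation group acting on $\tilde C$). This is immediate from the identification $B = \tilde C$ and $\Gamma = \pi_1^{orb}(C)$ as groups of orbifold isometries, but it is the step that really uses the hypothesis $\pi_1^{orb}(B)=1$; without it one would only get that $\Gamma$ lifts to a reflection group on $\tilde C$ together with some extra deck transformations of $B \to C$, and the clean identification $\pi_{1,refl}^{orb}(C) = \pi_1^{orb}(C)$ would fail. Everything else is a direct application of \pref{sketchy} and the standard presentation.
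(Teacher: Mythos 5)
Your proposal is correct and follows exactly the paper's own argument: identify $\Gamma$ with $\pi_1^{orb}(B/\Gamma)$ using $\pi_1^{orb}(B)=1$, conclude from \pref{sketchy} that $\pi_1^{orb}(C\setminus\partial C)=1$, and then read off the Coxeter presentation from subsection~\ref{fundgroups}. The only difference is that you spell out the (correct) identification of the two notions of reflection, which the paper leaves implicit.
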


\begin{proof}
 Consider the quotient $C= B/\Gamma$. Since $\pi _1 ^{orb} (B)=1$,  the group $\Gamma$ is the orbifold fundamental 
group of $C$.  Since $\Gamma$ is a reflection group, we deduce from \pref{sketchy}, that $\pi_1 ^{orb} (C\setminus \partial C) =1$.  The discussion at the end of subsection \ref{fundgroups} shows that $\Gamma$ is a  Coxeter group.
\end{proof}

\subsection{Orbifold points in quotients}
We call a point $x$ in our quotient $X=M/G$ an orbifold point if it has a 
neighborhood isometric to a Riemannian orbifold.
It has been shown in \cite{LT} that a point $p\in M$ is projected to an 
orbifold point in $X$ if and only if the
slice representation of $G_p$ on $\mathcal H_p$ is polar. 
The orbifold $X_{orb}$ of all orbifold points in $X$ 
is open, connected and it is a union of strata.
It contains all strata that have codimension at most $2$ in $X$, 
in particular, all $G$-important orbits. 
The set $X_{orb}$  has a non-empty boundary if and only if $X$ 
has non-empty boundary.

The following result, probably folklore, can be found in \cite{Lyt}:
 
\begin{lem} \label{simplycon}
Let $M$ be a simply connected  
complete Riemannian  manifold. 
Let $G$ be a connected compact group of isometries of $M$. Let $X$ be 
the quotient $M/G$.  Let $X_{orb}$ be the set of orbifold points 
in $X$ and set $X_0 = X_{orb} \setminus \partial X_{orb}$. Then
$X_0$ is exactly the set of non-singular $G$-orbits. Moreover, 
$X_0$ has trivial orbifold 
fundamental group. 
 \end{lem}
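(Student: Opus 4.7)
My plan has two main steps: establish that the orbifold fundamental group of a suitable quotient is trivial, then identify that quotient with $X_0$. Let $M_0\subset M$ denote the open subset of non-singular points. At any singular $p$, the compact group $G_p^0$ acts on $\mathcal H_p^\dagger$ with a positive-dimensional orbit, and since a compact group acts on $\R$ only with zero-dimensional orbits, one has $\dim \mathcal H_p^\dagger\geq 2$. As the codimension of $St(p)$ in $M$ equals $\dim \mathcal H_p^\dagger$ (cf.~subsection~\ref{stratific}), the singular locus has codimension at least two in $M$, so $\pi_1(M_0)=\pi_1(M)=1$.

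At every point of $M_0$ the effective slice action is finite (trivial at regular points, $G_p/G_p^0$ at exceptional ones), so $M_0/G$ is a Riemannian orbifold. I would argue that $\pi_1^{orb}(M_0/G)=1$ by a lifting argument: any orbifold cover $\tilde C\to M_0/G$ pulls back through the slice-theorem local models $G\times_{G_p}\mathcal H_p\to \mathcal H_p/(G_p/G_p^0)$ to a topological cover of $M_0$, which must be trivial since $M_0$ is simply connected, so $\tilde C\to M_0/G$ is also trivial.

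It remains to identify $M_0/G$ with $X_0$. For $X_0\subseteq M_0/G$: any $[p]\in X_0$ has polar slice representation (since $[p]\in X_{orb}$) with no reflection in its Weyl group (since $[p]\notin\partial X_{orb}$). A singular $p$ would produce a non-trivial connected polar representation of $G_p^0$ which, by Dadok~\cite{dad}, is the isotropy representation of a positive-rank symmetric space; its little Weyl group is a non-trivial finite Coxeter group, containing reflections, a contradiction. Hence $p$ is non-singular. For the reverse inclusion, \pref{sketchy} shows that the reflection-generated subgroup of $\pi_1^{orb}(M_0/G)$ is trivial (being a subgroup of the trivial group), so $M_0/G$ has no codimension-one stratum and therefore $M_0/G\subseteq X_{orb}\setminus\partial X_{orb}=X_0$.

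The main difficulty is the cover-lifting step: one must show carefully, using the slice theorem, that the pullback of an orbifold cover of $M_0/G$ is a topological cover of $M_0$ of uniform sheet number. A close alternative is to run a Leray--Serre-type long exact sequence for the ``orbifold principal fibration'' $M_0\to M_0/G$; this is subtle precisely because at an exceptional point the isotropy $G_p$ may be positive-dimensional while acting trivially on the slice, so the orbit map is not a Serre fibration in the classical sense. This technical verification is presumably where most of the effort in \cite{Lyt} is concentrated.
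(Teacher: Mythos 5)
The paper offers no proof of this lemma at all --- it is stated as ``probably folklore'' and deferred entirely to the reference \cite{Lyt} --- so your reconstruction has to be judged on its own merits, and it is essentially sound. The two identification steps are correct: at a singular point $G_p^0$ has a positive-dimensional orbit in $\mathcal H_p^\dagger$, forcing $\dim\mathcal H_p^\dagger\geq2$ and hence $\pi_1(M_0)=\pi_1(M)=1$; and your use of Dadok to produce a reflection in the generalized Weyl group of the slice representation at a singular orbifold point (the polar representation on $\mathcal H_p^\dagger$ has no nonzero fixed vectors and positive-dimensional orbits, so the associated symmetric space is non-flat and its Weyl group is a non-trivial Coxeter group) correctly shows every singular point of $X_{orb}$ lies in $\partial X_{orb}$. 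In the reverse inclusion you should add one sentence: a codimension-one stratum of $X$ \emph{not} contained in $M_0/G$ consists of singular orbits, and such strata cannot accumulate on $M_0/G$ because the tube around a non-singular orbit consists entirely of non-singular orbits; with that, triviality of $\pi_1^{orb}(M_0/G)$ (hence absence of boundary strata, by the discussion in subsection~\ref{fundgroups}) does give $M_0/G\subseteq X_0$.

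The one place where your argument is a sketch rather than a proof is the covering-lifting step, and you have correctly located the difficulty, but let me make it concrete: the \emph{naive} fiber product $M_0\times_{M_0/G}\tilde C$ is \emph{not} a covering of $M_0$ over the exceptional strata, since over the local model the fiber above $[g,v]$ is the double coset space $\Gamma_i\backslash\Gamma_p/(\Gamma_p)_v$, whose cardinality drops when $(\Gamma_p)_v\neq1$. One must instead assemble the pullback from the orbifold atlas of $\tilde C$, replacing $G\times_{G_p}D$ locally by $G\times_{G_p}(\Gamma_p\times_{\Gamma_i}D)$, which \emph{is} an honest $[\Gamma_p:\Gamma_i]$-fold cover, and then check that these local models glue via the compatibility data of the orbifold covering. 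Once that is done, triviality of the resulting cover of the simply connected $M_0$, together with connectedness of $G$ (which forces $G$ to preserve each sheet, so a sheet descends to a section of $\tilde C\to M_0/G$), finishes the argument. Since this gluing verification is precisely the content the paper outsources to \cite{Lyt}, I regard your proposal as a correct proof outline with one standard but non-trivial technical step left to the literature, exactly as the authors themselves do.
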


In particular, the above lemma says, that if $M$ is simply connected and 
$G$ connected, no $G$-important point may lie on an  exceptional orbit.

The open subsets $X_{reg}$ and $X_{orb}$ are dense and convex in the 
quotient space $X$. Thus for a pair of quotients $X=M/G$ and $Y=N/H$  
any isometry between $X_{orb}$ and $Y_{orb}$  (or between~$X_{reg}$ 
and~$Y_{reg}$) extends uniquely to an isometry between their completions~$X$ 
and~$Y$.

Consider again the quotient $X=M/G$.
If $G'$ is a subgroup of finite index in $G$ then $\pi (X' _{orb} ) =X_{orb}$,
where $X' =M/G'$ and $\pi : X' \to X$ is the canonical projection.

Let $X^0 $ be the subquotient $M/G^0$. Consider the orbifold
parts $X_{orb}$ and $X_{orb} ^0$. 
The group $G$ acts on $X^0 _{orb}$ by isometries
and we obtain a homomorphism $j$ from $G/G^0$ onto a finite group $D$ of isometries of $X^0_{orb}$, such 
that $X_{orb} = X_{orb} ^0 /D$.  Thus for the orbifold fundamental groups 
$\Gamma = \pi _1 ^{orb} (X _{orb})$ and 
$\Gamma _0 =  \pi _1 ^{orb} (X^0 _{orb})$, the group $\Gamma_0$
is a normal subgroup of $\Gamma$ with $D=\Gamma /\Gamma _0$.  

Any subgroup $G'$ of finite index in  $G$ has a subquotient $X' =M/G'$
such that the  orbifold fundamental group of $X' _{orb}$ is contained 
in $ \pi _1 ^{orb} (X _{orb})$   and contains 
 $ \pi _1 ^{orb} (X ^0 _{orb})$. On the other hand, any group 
$\Gamma '$ with  
$\Gamma _0 \subset \Gamma ' \subset \Gamma $  projects to a subgroup 
of $D$. Taking the preimage of this subgroup  under $j$ in $G$ we obtain  
a subgroup  $G'$ of finite index in $G$. Then  the orbifold part 
$X' _{orb}$ of  the quotient $X' =M/G'$  has $\Gamma '$ as its orbifold 
fundamental group.

Now we are in  position to prove the main results of this section.

\begin{proof} [Proof of \pref{liftnoboundary}]
Set $X=M_1 /G_1 =M_2 /G_2$. By assumption, $X^0 =M_1 /G_1 ^0$ has no 
boundary.  Hence
$X^0 _{orb}$ has trivial orbifold fundamental group by  
\lref{simplycon}.   Consider the orbifold covering
$X' _{orb}  =(M_2 /G_2') _{orb}$
of $X_{orb}$ defined by the subgroup $G_2 '$ of $G_2$.  
Since $\pi _1 ^{orb}  (X^0 _{orb})=1$, using considerations 
preceding the proof,
we find a subgroup $G_1 '$ of $G_1$ such that $(M_1 /G_1 ')_{orb} =X' _{orb}$. 
\end{proof}

\begin{proof}  [Proof of \pref{onereflection}]
Let $X =M_1 /G_1=M_2/G_2$ and consider the orbifolds 
$(M_i/G_i^0)_{orb}$ for $i=1$, $2$. We deduce from
\lref{simplycon} and \lref{keyreflection}  that  $\Gamma ^i =\pi _1 ^{orb}
((M_i/G_i ^0) _{orb})$ is generated by reflections.  
Consider now the subgroup $\Gamma  _{refl}$ of
$\Gamma = \pi _1 ^{orb} (X_{orb})$ that is generated by all 
reflections in $\Gamma$. Then $\Gamma_{refl}$
contains $\Gamma ^i$, hence due to the considerations above, 
we find subgroups of finite index $G_i^+$ in $G_i$ 
such that $(M_i /G_i ^+) _{orb}$  is isometric to  
$(X_{orb})_{refl}$, i.e., to the quotient
of the universal orbifold covering of $X_{orb}$ modulo 
$\Gamma _{refl}$.  Hence $M_i /G_i ^+$ for $i=1$, $2$ are isometric.

Since $\Gamma _{refl}$ is generated by reflections on the 
universal orbifold covering of $X_{orb}$,
the group $\Gamma _{refl} /\Gamma ^i$ acts as a reflection 
group on $(M_i /G_i ^0)_{orb}$.	
By construction,  $\Gamma _{refl} /\Gamma ^i$ is exactly the image of
$G_i ^+ /G_i ^0$  in the isometry group of $M_i /G_i ^0$.

It remains to prove that $\Gamma _{refl} /\Gamma ^i$  is a Coxeter group.
However, we have seen this group  acts as a reflection group on $M_i /G_i ^0$ hence also on 
the Riemannian orbifold
$(M_i /G_i ^0)_{orb} \setminus \partial ((M_i /G_i ^0 ) _{orb})$, which has trivial orbifold 
fundamental group.
Now we use \lref{lemcoxeter} to deduce the claim. 
\end{proof}

Specializing to the case in which the $M_i$ are Euclidean spaces, we 
obtain \pref{verysecond} as a special case of Proposition~\ref{onereflection}.
Moreover, under the assumptions of this proposition,
if $G_2$ acts with trivial principal isotropy groups, then $G_2 /G_2 ^0$ 
embeds into the isometry group
of $M_2 /G_2^0$. If $G_1$ is connected, we deduce that $G_2 /G_2 ^0$ must 
be a Coxeter group. This proves \cref{principalcoxeter}.

 \section{Triviality of the principal isotropy group} \label{sectrivial}
\subsection{Boundary points}\label{bdpts}
Let the compact group $G$ act (effectively)  on the simply connected 
manifold $M$ with trivial principal isotropy groups.
Then the principal isotropy group of any 
slice representation is trivial as well.  If $p\in M$ is a 
$G$-important point then the isotropy group $G_p$ must
act transitively on the unit  sphere $S^a$ in $\mathcal H_p^\dagger$, 
the normal space to the stratum $St(p)$.  Since the action must have 
trivial principal isotropy as well, $G_p$ must be diffeomorphic 
to~$S^a$. But this can happen only for~$a=0$, $1$ or $3$.  
Note that if $a=1$ or $3$, 
then $G_p$ is contained in the identity component $G^0$ of 
$G$ and $p$ is $G^0$-important.
On the other hand, if $p$ is a $G^0$-important point, then it cannot 
lie on an $G^0$-exceptional orbit, due to 
\lref{simplycon}.  Thus $G_p$ must be non-discrete and $a\neq 0$ in this case.

Hence $p$ is $G$-important and not $G^0$-important 
if and only if $G_p$ is a group with only one non-trivial element~$w$. 
This element $w$ is an involution in $G\setminus G^0$ which normalizes $G^0$.  
Moreover, it acts as a reflection on~$M/G^0$.

Summarizing and using Subsection~\ref{stratific} we arrive at:
\begin{lem} \label{boundreduced}
Let the compact group $G$ with identity component $G^0$ act 
effectively on a simply connected manifold $M$. 
Assume that the principal isotropy group of $G$ is trivial. 
If a point $p$ is    $G$-important, then $G_p= S^a$, for $a$ equal 
to $0$, $1$ or $3$. We have $a=0$ if and only if $p$ is 
not  $G^0$-important. Moreover the stratum 
$St(p)$  through $p$ has  dimension $\dim St(p)=\dim (M) -a -1 = f_p+g-n_p$, 
where $f_p$ denotes the dimension of the connected component 
of the set of fixed points of $G_p$,
$g$ is the dimension of $G$ and $n_p$ is the dimension of the 
normalizer of $G_p$ in $G$.
\end{lem}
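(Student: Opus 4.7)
The plan is to assemble the observations already made in Subsection~\ref{bdpts} together with the stratification formulas recorded in Subsection~\ref{stratific}; the proof will be essentially a summary, so no single step is an obstacle, though one must be careful that triviality of principal isotropy propagates correctly to the slice representation.

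First I would observe that since $p$ is $G$-important, the slice representation of $G_p$ on $\mathcal H_p^\dagger$ has cohomogeneity~$1$. Hence $G_p$ acts transitively on the unit sphere $S^a\subset\mathcal H_p^\dagger$, where $a+1=\dim\mathcal H_p^\dagger$. Triviality of the principal isotropy of $G$ on $M$ passes to slice representations, so the principal (and in fact generic) isotropy of $G_p$ acting on $S^a$ must be trivial. Transitivity together with trivial stabilizers forces the action to be simply transitive, so $G_p$ is diffeomorphic to $S^a$. Since the only spheres carrying a Lie group structure are $S^0$, $S^1$, $S^3$, this gives $a\in\{0,1,3\}$.

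Next I would pin down when $p$ is $G^0$-important. If $a\in\{1,3\}$, then $G_p\cong S^a$ is connected, hence contained in $G^0$; as $G_p$ is then also the $G^0$-isotropy at $p$, the same slice representation witnesses that $p$ is $G^0$-important. Conversely, when $a=0$ the group $G_p$ is the two-element group generated by an involution $w\in G\setminus G^0$ (already noted in~\ref{bdpts}); in particular the $G^0$-isotropy $G_p\cap G^0$ is trivial. By \lref{simplycon}, since $M$ is simply connected and $G^0$ connected, a $G^0$-important point cannot lie on a $G^0$-exceptional orbit, so its $G^0$-isotropy is non-discrete. Hence $a=0$ precludes $G^0$-importance, proving the equivalence.

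Finally, for the dimension identities: the equality $\dim St(p)=f_p+g-n_p$ is just the formula from Subsection~\ref{stratific}. For $\dim St(p)=\dim M-a-1$, note that triviality of the principal isotropy yields $\dim X=\dim M-g$, and $G$-importance of $p$ says $St_X(x)$ has codimension~$1$ in $X$, i.e.\ $\dim St_X(x)=\dim M-g-1$. The restriction of the quotient map $St(p)\to St_X(x)$ is a submersion whose fibers are orbits $G/G_q$ with $G_q$ conjugate to $G_p$, hence of dimension $g-a$. Adding, $\dim St(p)=(\dim M-g-1)+(g-a)=\dim M-a-1$, which completes the proof.
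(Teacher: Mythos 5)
Your proof is correct and takes essentially the same route as the paper, which obtains \lref{boundreduced} by combining the observations of Subsection~\ref{bdpts} (transitivity of $G_p$ on $S^a$ with trivial stabilizers forces $G_p\cong S^a$, hence $a\in\{0,1,3\}$; connectedness for $a=1,3$; and \lref{simplycon} to exclude $G^0$-importance in the discrete case) with the dimension formula of Subsection~\ref{stratific}. One cosmetic remark: in the $a=0$ case you need not invoke the fact that $w\in G\setminus G^0$ (which in the paper is derived \emph{from} the equivalence being proved), since discreteness of $G_p$ alone already rules out $G^0$-importance via \lref{simplycon}, exactly as the rest of your sentence shows.
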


\subsection{Codimension two strata}
Let a \emph{connected} compact group $G$ act on a simply connected 
manifold $M$ with trivial principal isotropy
group. Let $X$ be the quotient $M/G$, 
and let $x$ be a point in a stratum of codimension two in $X$. 
Then $x\in X_{orb}$.  If $x$ is not contained in the boundary 
$\partial X$ it must be an exceptional orbit
of the $G$-action (Lemma~\ref{simplycon}). 
Otherwise, $x\in\partial X$. Take a point $p$ in the 
$G$-orbit corresponding to $x$.
Then the tangent cone $C_x X$ at $x$ to $X$ is 
isometric to the quotient $\mathcal H _p /G_p$
of the slice representation.   
The assumption that $x$ lies in a stratum of codimension two 
implies that $G_p$ fixes a subspace $\R ^{k-2}$ of 
$\mathcal H_p$ and acts on the orthogonal complement
$\mathcal H_p^\dagger$ with codimension two.

The representation of $(G_p)^0$  on $\mathcal H_p^\dagger$ 
is of cohomogeneity two 
and the quotient $\mathcal H_p^\dagger / (G_p) ^0$ is isometric to 
$\R ^2 /D_m$,  where $D_m$
is the dihedral group of order $2m$ that is generated by two reflections 
at lines enclosing the angle 
$\pi /m$.  The classification of representations of cohomogeneity 
two~\cite{hl} implies
that $m$ can be only~$2$, $3$, $4$ or~$6$  
(much more sophisticated topological argument is given in 
\cite{Mu2}). Moreover, the classification shows that the action of 
$(G_p) ^0$ on $\mathcal H_p^\dagger$
has non-trivial principal isotropy groups, 
possibly unless if~$m=2$. 
If  the action of $(G_p) ^0$ has non-trivial principal
isotropy, so does the action of $G$.    
Thus $\mathcal H _p / (G_p) ^0$ is isometric to
 $\R ^{k-2} \times \R ^2 /D_2$.

We claim that the action of $G_p$ on $\mathcal H _p$ is orbit-equivalent to the action of $(G_p) ^0$. 
Otherwise, $G_p /(G_p)^0$  
acts as a non-trivial group of isometries on $\mathcal H_p^\dagger / (G_p) ^0$.
However, the only non-trivial isometry of the quadrant $\R ^2 /D_2$ is the reflection at the midline of the quadrant.
Any non-zero  small vector  $v\in \mathcal H_p$ in the preimage of this line is $G_p$-important and $(G_p) ^0$-regular.
Exponentiating $v$, we find a point $p'$ in $M$ close to $p$ that is $G$-important and lies on an exceptional orbit.
This contradicts   \lref{simplycon}.

Thus, we have shown   that  the tangent cone $C_x X$ is 
isometric to $\R ^{k-2} \times \R ^2 /D_2$.  
But this means that the two strata 
of codimension $1$, whose closures
contain the point $x$, meet at the point $x$ at a right angle. 

\begin{lem} \label{isotropy}
Let a connected compact Lie group $G$ act isometrically on a simply connected 
manifold $M$. Assume that the principal isotropy groups of the action 
are trivial.  Then codimension $1$ strata of $X=M/G$ can only meet at 
a right angle, i.e., for  any point $x$ in a
stratum of codimension two in $X$ which is contained in the boundary,
the tangent cone $C_x X$ at $x$ is isometric
to $\R ^{k-2}  \times \R ^2 /D_2$.  Here, $D_2$ is the dihedral group of order $4$ generated by reflections at two orthogonal lines. 
\end{lem}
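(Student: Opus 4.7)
The plan is to analyze the slice representation at any point $p\in M$ lying over $x$, since the tangent cone $C_xX$ is isometric to the quotient $\mathcal H_p/G_p$ of this slice representation. Because $x$ sits on a stratum of codimension two, $G_p$ must fix a $(k-2)$-dimensional subspace of $\mathcal H_p$ and act on its orthogonal complement $\mathcal H_p^\dagger$ with cohomogeneity two.

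First I would invoke the classification of cohomogeneity-two representations of connected compact Lie groups (cf.~\cite{hl}) to conclude that $\mathcal H_p^\dagger/(G_p)^0$ is isometric to $\R^2/D_m$ with $m\in\{2,3,4,6\}$. The crucial additional piece of information from that classification is that the action of $(G_p)^0$ has non-trivial principal isotropy whenever $m\neq 2$. Since the principal isotropy group of any slice representation agrees with the principal isotropy group of $G$ on $M$, which is trivial by hypothesis, the only possibility is $m=2$. Hence $\mathcal H_p/(G_p)^0 \cong \R^{k-2}\times \R^2/D_2$.

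It remains to rule out that the finite extension $G_p/(G_p)^0$ acts non-trivially on this quotient. If it did, then, since the only non-trivial isometry of the closed quadrant $\R^2/D_2$ is the reflection across its midline, some small nonzero vector $v\in\mathcal H_p^\dagger$ projecting to this midline would be $G_p$-important and simultaneously $(G_p)^0$-regular. Exponentiating $v$ would produce a point $p'\in M$ near $p$ that is $G$-important and lies on a $G^0$-exceptional orbit, contradicting Lemma~\ref{simplycon}. This forces $G_p$ and $(G_p)^0$ to be orbit-equivalent on $\mathcal H_p$, and therefore $C_xX$ is isometric to $\R^{k-2}\times \R^2/D_2$.

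The main obstacle is really external: the classification of cohomogeneity-two representations is used as a black box to supply both the restriction $m\in\{2,3,4,6\}$ and the principal-isotropy information that eliminates $m\in\{3,4,6\}$. Once those are granted, the rest is a routine assembly of the slice-representation picture with the previously established dichotomy between $G$-important and $G^0$-exceptional points.
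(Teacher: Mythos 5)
Your proposal is correct and follows essentially the same route as the paper: the slice-representation reduction, the classification of cohomogeneity-two representations from~\cite{hl} to force $m=2$ via triviality of the principal isotropy of the slice, and the midline-reflection argument combined with \lref{simplycon} to show $G_p$ and $(G_p)^0$ are orbit-equivalent on $\mathcal H_p$. No gaps.
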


\subsection{Nice involutions}  \label{newsubsec}
Assume now that a (possibly disconnected) 
group $G$ acts on Riemannian manifold $M$ with 
trivial principal isotropy group.
Assume moreover that $\Gamma=G/G^0$ acts on $M/G^0$ as a reflection group.  
Since the action has trivial principal isotropy groups, 
the action of $\Gamma$ on 
$M/G^0$ is effective.  By definition, the group $\Gamma$ 
is generated by elements  $w'\in G$ that act on
the quotient $M/G^0$ as reflections. Given any $w'\in G$ that acts 
on $M/G^0$ as a reflection, we take a regular point $x\in M/G^0$ 
that is fixed by $w'$. Let $p$ be a preimage of 
$x$ in $M$. Then $p$ is $G^0$-regular and $G$-important.  
Thus $G_p$ contains only one non-trivial element $w$, equal
to $w'g$, for some $g\in G^0$. This element $w$ is an involution and
the connected component~$F$ through $p$ of its set of fixed points 
satisfies
$\dim (G\cdot F)= \dim (F) +\dim(G)- \dim (C)  = \dim (M) -1$, where $C$ is the centralizer of $w$ (hence the normalizer of $G_p$) in~$G$.
Since $w$ and $w'$ are equivalent modulo $G^0$, involutions of the kind 
of $w$ generate $\Gamma$; we will
call them \emph{nice involutions}.

\section{Basic observations} \label{secbasic}
In this section we collect some basic observations about  
quotient-equivalences. 
For a Euclidean space $U$, we will denote by $S(U)$ the unit sphere of $U$.

\subsection{Fixed points and origins}
Let $U$ be a Euclidean space and let $K$ be a nontrivial
closed subgroup of $O(U)$. 
Let $F$ be the set of fixed points
of $K$, and let $F^{\perp}$ be its orthogonal complement. Then $X=U/K$   
splits as $F\times (F^{\perp} /K)$.
Moreover, for any unit vector $v$ in $U \setminus F$  there 
is no unit vector  $v'$~in $ U$   with
$d(K\cdot v ,v')= 2$. Thus there is no geodesic in 
$U /K$ that starts in $K\cdot v$ and has the origin $K\cdot0=0_U$ as 
its midpoint.  
Thus $F$ contains the set of all lines through the origin. In particular, 
it is the unique maximal Euclidean factor of $U/K$.  

On the other hand, in the factor $F^{\perp} /K$  the origin $0_U$ 
is the only point that is not the midpoint of some geodesic 
(all other points lie on the ray that starts at $0_U$).

Therefore, for any other representation of a group $K'$ on an 
Euclidean space $U'$, any isometry $I:U/K \to U' /K'$
must be given as a product of isometries $I_1:F\to F'$ and
$I_2: F^{\perp}/K \to (F')  ^{\perp} /K'$.  
Moreover, the second isometry  $I_2$ must send the origin to the origin.

Changing our isometry $I$ by an isometry of the Euclidean space $F$, 
if needed, we may therefore assume
that $I$  sends the origin $0_U$ to the origin $0_{U'}$.  
From now on we will make this assumption.

Since the quotient of the unit  spheres $S(U) /K$ is just the unit 
distance sphere in the quotient, our isometry $I$ induces an isometry 
between the spherical quotients $I:S(U) /K \to S(U') /K'$.

If the set of fixed points $F$ is non-trivial, then $S(U)/K$ 
has diameter $\pi$.  On the other hand, assume
that the diameter of $S(U) /K$ is larger than $\pi /2$. 
Then, for some orbit $K\cdot v$ of a unit vector $v$,
the set of points in the unit sphere with distance $\geq \pi/ 2 +\epsilon$ to this orbit   
  is non-empty, for some positive $\epsilon$.
But this set is compact, convex, $K$-invariant and does not contain  
great circles. Hence
it has a unique center which must be fixed by $K$.

Thus we see that the action has non-zero fixed points if and only if 
the diameter of $S(U)/K$ is 
larger than $\pi /2$ (in which case it is equal to $\pi$).

\subsection{Invariance of reducibility} We want to prove that invariant 
subspaces can be recognized metrically and thus are invariants of the 
quotient-equivalence classes.

Thus let $K$ act on $U$ as above. Consider the restricted action on the 
sphere $S(U)$ with quotient $X=S(U) /K$. 
Assume that there are no  fixed points of $K$ in $S(U)$.

We claim that a closed
subset $Z\subset X$  has the form $S(V)/K$  for some 
$K$-invariant subspace $V$ if and only if there is  some $Z' \subset X$ 
such that $Z$ is the set of all points $z \in X$ 
with  $d(z,z') =  \pi /2$,
for all  $z'\in Z'$.

Namely, if $Z=S(V)/K$ where $V$ is $K$-invariant,  
one can consider its orthogonal complement  $V^{\perp}$
and set $Z' := S(V^{\perp})/K$. On the other hand, 
if $Z\subset X$ is given in terms of $Z'$ as above,
then the preimage of $Z$ in $S(U)$ is a compact
convex $K$-invariant subset  
of~$S(U)$. Since~$K$ does not have fixed points 
in $U$, this subset  must be a great subsphere of $S(U)$.  
Thus it is the unit sphere of a $K$-invariant subspace.  

This provides a metric description of projections of invariant 
subspaces under the assumption of the absence of fixed points. 
Combining it with the previous subsection we arrive at:

\begin{lem} \label{folk}
Let $\rho :K \to\OG{U}$ and $\rho' : K'\to\OG{U'}$ be quotient-equivalent 
representations, with projections $\pi:U\to U/K$ and $\pi' :U' \to U' /K'$.  
Then $\rho$ is irreducible 
if and only if $\rho '$ is. More precisely, if $I:U/K \to U'/K'$ is an 
(origin preserving)  isometry then
for any $K$-invariant subspace $V$ of $U$ the subset  
$\pi '^{-1} (I (\pi (V)))$ is a $K'$-invariant subspace
of $U'$.  
\end{lem}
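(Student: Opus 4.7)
The plan is to assemble the proof from the two preceding subsections, reducing first to the spherical setting without fixed vectors and then exploiting a metric characterization of projections of invariant subspaces.

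First I would reduce to the case where neither action has non-zero fixed vectors. By the discussion in the subsection ``Fixed points and origins,'' the fixed-point subspace $F\subset U$ is intrinsically recognizable inside $U/K$ as the unique maximal Euclidean factor, and analogously for $F'\subset U'$. Hence any isometry $I:U/K\to U'/K'$ splits as a product of an isometry $F\to F'$ and an origin-preserving isometry $F^{\perp}/K\to (F')^{\perp}/K'$. On the fixed factors both actions are trivial, so the claim about invariant subspaces is automatic there. It therefore suffices to treat $F^{\perp}$ and $(F')^{\perp}$, i.e., assume there are no non-trivial fixed vectors and that $I$ sends the origin to the origin. In that case $I$ restricts to an isometry between the unit sphere quotients $S(U)/K$ and $S(U')/K'$.

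The heart of the proof is a purely metric characterization of spherical images of invariant subspaces: a closed subset $Z\subset S(U)/K$ has the form $S(V)/K$ for some $K$-invariant subspace $V\subset U$ if and only if there exists $Z'\subset S(U)/K$ such that $Z=\{z : d(z,z')=\pi/2\ \text{for all}\ z'\in Z'\}$. The forward direction uses $Z':=S(V^{\perp})/K$. For the converse, I would lift: the preimage $\tilde Z\subset S(U)$ of $Z$ is closed, $K$-invariant, and, being an intersection of the ``equators'' of the preimages of points of $Z'$, is convex in $S(U)$. A compact convex $K$-invariant subset of $S(U)$ that is not a great subsphere has a unique circumcenter, which must be $K$-fixed; since by assumption there are no such fixed vectors, $\tilde Z$ must be a great subsphere $S(V)$, and its $K$-invariance forces $V$ to be $K$-invariant.

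Finally, since this characterization is phrased entirely in terms of distances, the isometry $I$ transports it across: for any $K$-invariant subspace $V\subset U$, the image $I(\pi(V)\cap S(U)/K)$ again satisfies the above condition, hence equals $\pi'(V')\cap S(U')/K'$ for a $K'$-invariant subspace $V'\subset U'$. Using that $I$ intertwines the Euclidean cone structure on each side (distance from the origin being preserved), this upgrades to $\pi'^{-1}(I(\pi(V)))=V'$, proving the ``more precisely'' statement. Irreducibility is then a formal corollary: a proper non-trivial $K$-invariant subspace yields a proper non-trivial $K'$-invariant one, and symmetrically. The main obstacle is the convexity/circumcenter argument in the converse direction of the metric characterization; the rest is bookkeeping built on the already-proven splitting of $U/K$ and the origin-preserving nature of $I$.
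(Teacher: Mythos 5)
Your proposal is correct and follows essentially the same route as the paper, which derives the lemma precisely from the splitting off of the fixed-point factor and the metric characterization $Z=\{z: d(z,z')=\pi/2 \ \forall z'\in Z'\}$ of spherical images of invariant subspaces. The only point to tighten is the convexity of the preimage $\tilde Z$: it is not literally an intersection of equators, but rather of the convex sets $\{u:\langle u,v\rangle\le 0 \ \forall v\in O_{z'}\}$, which coincide with the sets $\{u: d(u,O_{z'})=\pi/2\}$ only because the absence of fixed vectors forces $\operatorname{diam}(S(U)/K)\le\pi/2$, as established in the preceding subsection.
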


\subsection{Existence of  boundary points}
Given a representation of $\rho :G \to \OG{V}$ of a group $G$, 
the copolarity of $G$ and of its identity component $G^0$
coincide \cite{got}. On the other hand, the corresponding statement about the
abstract copolarity is not clear to us.
The following lemma, which is most basic to our 
results, is formulated in terms of the identity
component. In view of this lemma, it seems that the 
abstract copolarity of the representation may always coincide with that of
the identity component. 

\begin{prop} \label{boundaryexist}
Let $\rho:G \to\OG{V}$ be an effective representation and let 
$G^0$ be the identity component of $G$.  If $X_0=V/G^0$ has an empty 
boundary then the representation $\rho$ is reduced.
\end{prop}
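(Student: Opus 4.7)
The plan is to argue by contradiction, using \pref{veryfirst} together with the dimension formula for reduced representations noted just before \tref{verythird}.

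Suppose $\rho$ is not reduced. Then there exists a quotient-equivalent representation $\rho':G'\to\OG{V'}$ with $\dim(G')<\dim(G)$. Among all such $\rho'$, choose one whose group $G'$ has minimal dimension; then $\rho'$ is itself reduced. Consequently, $\rho'$ has trivial principal isotropy groups, so
\[
\dim(V') \;=\; \dim(G') + \dim(V'/G').
\]
For $\rho$ itself we always have the inequality $\dim(V)\geq \dim(G)+\dim(V/G)$, and the quotient dimensions agree: $\dim(V/G)=\dim(V'/G')$.

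Now I apply \pref{veryfirst} with $\rho_1=\rho$ and $\rho_2=\rho'$. The hypothesis of the proposition is exactly that $V_1/G_1^0=V/G^0$ has no boundary, which is what we assume. The proposition then yields
\[
\dim(V)\;\leq\;\dim(V').
\]
Combining this with the three displayed relations,
\[
\dim(V')\;\geq\;\dim(V)\;\geq\;\dim(G)+\dim(V/G)\;>\;\dim(G')+\dim(V'/G')\;=\;\dim(V'),
\]
a strict contradiction. Hence no such $\rho'$ can exist, and $\rho$ is reduced.

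There is no substantive obstacle here beyond invoking the two ingredients correctly; the argument is a short synthesis of \pref{veryfirst} (which supplies the inequality $\dim V\leq\dim V'$ precisely under the no-boundary hypothesis on $V/G^0$) and the elementary fact that reduced representations, having trivial principal isotropy, saturate the inequality $\dim V\geq \dim G+\dim(V/G)$. The only mild care needed is to pass from an arbitrary reduction to a \emph{minimal} one so as to make the latter formula available; this is legitimate since dimensions of compact Lie groups are nonnegative integers, so a minimum is attained.
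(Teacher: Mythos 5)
Your proposal does not engage with the substance of this proposition: it derives it from \pref{veryfirst}, but in the paper \pref{veryfirst} is itself a corollary of \pref{boundaryexist} (the line immediately after the proof reads ``This result immediately implies \pref{veryfirst}''), and it is proved nowhere else. So relative to the paper's logical structure your argument is circular. The actual content of the paper's proof is an index-comparison argument: since $Y_0=S(V)/G^0$ has no codimension-one strata, one finds a geodesic $\gamma$ of length $\pi$ in the regular part starting over a point that is regular in $Y=S(V)/G=S(W)/H$; the index of $\gamma$ equals the focal index of a horizontal lift in the round sphere $S(V)$, which is $\dim(L)$ for the corresponding $G^0$-orbit $L$; for the second presentation, the focal index of a horizontal lift of the image orbifold geodesic is at least that index by the results of \cite{LT}, whence $\dim(L')\geq\dim(L)$ and $\dim(W)\geq\dim(V)$. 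None of this appears in your write-up.

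Separately, even granting \pref{veryfirst}, your displayed chain breaks at its second link: the inequality $\dim(V)\geq\dim(G)+\dim(V/G)$ is backwards. For any orthogonal representation, $\dim(V/G)=\dim(V)-\dim(G\cdot v)\geq\dim(V)-\dim(G)$ for a principal $v$, i.e. $\dim(V)\leq\dim(G)+\dim(V/G)$, with equality exactly when the principal isotropy groups are discrete (e.g.\ $\SO3$ on $\R^3$ violates your inequality). With the correct direction the contradiction evaporates: from $\dim(V)\leq\dim(V')=\dim(G')+\dim(V'/G')$ and $\dim(V)\leq\dim(G)+\dim(V/G)$ one cannot conclude $\dim(G)\leq\dim(G')$. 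The repair is to first establish that $\rho$ itself has trivial principal isotropy group: if $K\neq1$ were a principal isotropy group, the reduction to $N(K)/K$ acting on $V^K$ would be quotient-equivalent with $\dim(V^K)<\dim(V)$, contradicting the minimality of $\dim(V)$ furnished by \pref{veryfirst}; only then does $\dim(V)=\dim(G)+\dim(V/G)$ hold and the comparison of group dimensions go through. But this repair still rests on \pref{veryfirst}, so the circularity remains the essential defect.
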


\begin{proof}
Assume that the boundary of $X_0$ is empty and assume that $X=V/G$ is 
isometric to $W/H$, for a representation of a group $H$ on $W$ with  
$\dim (W) < \dim (V)$. 

The ``unit sphere'' $Y_0$ in $X_0$, i.e., the distance sphere 
to the orbit of the origin, is isometric to $S(V) /G^0$. Since 
$X_0$ is the Euclidean cone over $Y_0$, the quotient $Y_0$ does not 
have boundary as well.   
The unit sphere $Y$ of $X$ is isometric to $S(V)/G$ and to $S(W)/H$. 
By construction, $Y$ is a finite quotient 
$Y=Y_0/\Gamma$, for $\Gamma=G/G^0$.

Since $Y_0$ has no strata of codimension $1$, 
we find an infinite geodesic in $Y_0$ that is contained 
in the set of $G^0$-regular orbits and starts at a point 
$y \in Y_0$, 
such that $y$ is projected to a regular point in $Y$.
Take a part $\gamma$ of this geodesic that has length $\pi$. 
Let $m$ be the index of this geodesic 
(i.e., the number of conjugate points along $\gamma$, counted with 
multiplicities). 
For the Riemannian submersion $S(V)_{reg} \to (Y_0)_{reg}$, 
consider any horizontal lift
$\eta $ of $\gamma$. Then the index $m$ of $\gamma$ is equal 
to the $L$-index of $\eta$ (i.e.~the number of $L$-focal
points along $\eta$), where $L$ is the $G^0$-orbit corresponding to the 
point $y$, and where $\eta$ is considered as an $L$-geodesic. But in the round sphere $S(V)$, the $L$-index
of any $L$-geodesic $\eta$ of length $\pi$  is exactly the dimension of $L$.
Thus $m=\dim (L)$ in this case.

Consider now the image $\gamma '$ of $\gamma$ in $Y$. It is contained 
in the orbifold part of $Y$ and is (by definition) an \emph{orbifold geodesic} 
that starts at a regular point. Consider a lift $\eta'$ of $\gamma '$  
to an $H$-horizontal geodesic in $S(W)$ that starts on a regular $H$-orbit 
$L'$.   It has been 
shown in \cite{LT} that the $L'$-index of the geodesic $\eta'$  is equal 
to the sum of the index of the orbifold-geodesic $\gamma '$
and a ``vertical index'',  a non-negative number that counts the number 
of intersection of $\eta '$ with $H$-singular orbits.
In particular, it is not smaller than $m$, the index of the 
orbifold-geodesic $\gamma'$.
Using again that the $L'$-index of $\eta'$ is given by the 
dimension of $L'$, we get
$\dim (L') \geq m$.
 
But this contradicts 
$\dim (V)= \dim (X) +\dim (L)> \dim (W) = \dim (X) +\dim (L') $. 
\end{proof}
 
\medskip

This result immediately implies \pref{veryfirst}.

\begin{rem}
Applying Wilking's transversal Jacobi 
equation~\cite[Cor.~10]{Wilk}, one can deduce in a similar way the 
following related statement.
Let a compact group $G$ act on a compact positively curved manifold $M$.
If the quotient $M/G^0$ has no boundary then the   action has trivial 
copolarity.
\end{rem}

\section{Main argument} \label{secmain} 

In this section we are going to prove the following result:

\begin{prop} \label{mainproposition}
Let a non-discrete  group
$G$ act with trivial copolarity on a Euclidean vector space $V$.
Assume that $G/G^0$ acts on $V/G^0$ as a reflection group. Assume also
that~$G$ acts irreducibly but that the action
of $G^0$ is reducible. Then  either the action of $G^0$ on $V$
can be identified with the action of the maximal 
torus of~$\SU n$ on~$\C ^n$, or~$G^0$ is one of the groups 
$\U 2$ or~$\U 1 \cdot \SP 2$ and~$V$ is the double 
of the vector representation (on~$\C ^2$ or~$\mathbf H ^2$, 
respectively).
\end{prop}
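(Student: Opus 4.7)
The plan is to combine Clifford theory for the restriction $V|_{G^0}$ with the reflection group hypothesis on $\Gamma := G/G^0$ acting on $V/G^0$. Since $V$ is $G$-irreducible but $G^0$-reducible, Clifford theory yields an orthogonal decomposition $V = V_1 \oplus \cdots \oplus V_N$ into $G^0$-isotypical components which $\Gamma$ permutes transitively. The analysis splits into the cases $N \geq 2$ (several isotypical components) and $N = 1$ (one isotypical component of multiplicity $m \geq 2$).

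In the case $N \geq 2$, I would first show that each $V_i$ is $G^0$-irreducible (multiplicity one): a multiplicity $\geq 2$ inside $V_i$ would force, via the transitive permutation action of $\Gamma$ on $\{V_i\}$ and the ensuing diagonal structure, a non-trivial principal isotropy inside $G$, contradicting trivial copolarity. Next, I would analyze a nice involution $w \in G \setminus G^0$ via the involution $\sigma$ it induces on $\{1,\ldots,N\}$. Combining \pref{boundaryexist} applied to the $G^0$-action on each $V_i$ (which must exhibit boundary in its quotient, since otherwise the reduced slice structure would violate reducedness under the combined action) with \lref{isotropy} (which forces codimension-one strata of $V/G$ to meet orthogonally and so controls how swap-type reflections are arranged) forces each $V_i$ to be two-real-dimensional with $G^0$ acting through a non-trivial character. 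Hence $G^0$ is a torus. Effectiveness of $G^0$ on $V$ and the fact that the characters $\chi_1,\ldots,\chi_N$ are permuted transitively by $\Gamma$ identify $(G^0, V)$ with the maximal torus of $\SU N$ on $\C^N$; re-labelling $N-1$ as the $k$ of the proposition gives the first conclusion.

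In the case $N = 1$, write $V = U \otimes_D W$ with $U$ an irreducible $G^0$-module, $D \in \{\R,\C,\mathbf H\}$ the commuting field, and $m := \dim_D W \geq 2$. The group $\Gamma$ acts on $W$ through a subgroup of the $D$-unitary group of $W$, and irreducibility of $G$ on $V$ demands this $\Gamma$-action be $D$-irreducible. Triviality of the $G$-principal isotropy on $V$ forces $G^0$ to act transitively on the unit sphere of $U$: otherwise a non-trivial element of the $G^0$-principal isotropy on $U$ would survive in the isotropy of a generic element of $U^{\oplus m}$. I would then invoke the Borel--Montgomery--Samelson classification of transitive sphere actions, and for each candidate $(G^0, U)$ compute the principal isotropy of $G^0$ acting diagonally on $U^{\oplus 2}$. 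For $(\U n, \C^n)$ this is $\U{n-2}$, for $(\SP n, \mathbf H^n)$ it is $\SP{n-2}$, both trivial only when $n = 2$; for the remaining candidates ($\SO n$ on $\R^n$, $\G$ on $\R^7$, $\Spin 7$ on $\R^8$, $\Spin 9$ on $\R^{16}$) either the principal isotropy on $U^{\oplus 2}$ remains non-trivial, or the only involutive extension of $G^0$ available leaves $V$ reducible, so none survives. The two surviving possibilities are exactly $(\U 2, \C^2\oplus\C^2)$ and $(\U 1\cdot\SP 2, \mathbf H^2\oplus\mathbf H^2)$; for higher multiplicity $m\geq 3$, an analogous slice computation shows that the principal isotropy in $\U m(D)$ on $W$ is forced to be non-trivial once $\Gamma$ is a reflection group, so $m = 2$ is rigid.

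The main obstacle will be the classification step in $N = 1$: a careful walk through the Borel--Montgomery--Samelson list, computing principal isotropies of diagonal actions on pairs and verifying compatibility with both the reflection group condition and $G$-irreducibility. In the $N \geq 2$ case the principal subtlety is the reduction to multiplicity one inside each $V_i$ and then to $\dim V_i = 2$; this is precisely where \pref{boundaryexist} and \lref{isotropy} are brought to bear in concert with the transitive $\Gamma$-action to pin down the torus picture.
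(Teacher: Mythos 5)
Your skeleton matches the paper's: split on the number $N$ of $G^0$-isotypical components, aim for the torus picture when $N\geq 2$, and reduce to cohomogeneity-one doublings when $N=1$. But several of your key steps do not go through as stated. In the case $N\geq 2$, the claim that \pref{boundaryexist} and \lref{isotropy} ``force each $V_i$ to be two-real-dimensional'' is unsupported and is where most of the actual work lies. What one can extract from a nice involution swapping two components (the paper's \lref{keyreducible}, a codimension count on $G\cdot\mathrm{Fix}(w)$) is only that $G^0$ acts on each $V_i$ with \emph{cohomogeneity one}; this leaves open non-abelian candidates such as products of $\SP1$'s, $\Spin8$ with its triality-related $8$-dimensional representations, and factors $\U1\times\SU n$, $\U1\times\SP n$, $\SP1\times\SP n$. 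The paper devotes an entire case analysis (subsection on ``Generalized toric actions II'') to eliminating these, and nothing in your argument replaces it. Even granting that $G^0$ is a torus, transitivity of $\Gamma$ on the characters is not enough to identify the action with the maximal torus of $\SU N$ on $\C^N$: one needs that $\Gamma$ acts as the \emph{full} permutation group (which the paper gets because nice involutions induce transpositions), whence the weight lines are equiangular, and then a separate lemma showing equiangular lines invariant under the full symmetric group are the vertices of a regular simplex — i.e., the weights sum to zero. Without this, the torus could have the wrong dimension or wrong weights.

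In the case $N=1$, your principal-isotropy computation is not sufficient to arrive at the stated list. First, $\SP1$ on $\Q\oplus\Q$ and $\SP2$ on $\Q^2\oplus\Q^2$ both have trivial principal isotropy and even trivial copolarity on the double (they appear in the paper's \lref{trivialcopolarity}); excluding $G^0=\SP m$ requires the paper's explicit analysis of the nice involution $w=(v_1,v_2)\mapsto(p^{-1}v_2,pv_1)$, showing $p^2=1$ and $(cp)^2=1$ for every $c$ in the centralizer $C=\SP1$ inside the normalizer $\SP m\cdot\SP1$, which is impossible. Your proposal never engages with this. Second, forcing the multiplicity to be $m=2$ cannot be done by a slice/principal-isotropy argument: $\U1$ acting diagonally on $\C^{\oplus l}$ has trivial principal isotropy for every $l$, so ``the principal isotropy is forced to be non-trivial for $m\geq3$'' is false. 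The correct mechanism (and the paper's) is that for $l\geq 3$ the set $G^0\cdot\mathrm{Fix}(w)$ has codimension at least $2$, so $w$ cannot act as a reflection on $V/G^0$. Finally, your list of survivors omits $\U1$ on $\C\oplus\C$ (the Hopf action), which does occur and is absorbed into the first alternative as the maximal torus of $\SU2$ on $\C^2$.
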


\subsection{Basic lemma}  
Since the representation has trivial copolarity, it has trivial 
principal isotropy group. Thus 
$G/G_0$ is generated by nice involutions in terms of 
Subsection~\ref{newsubsec}.
Since $G^0$ is normal in $G$, any element of $G$ sends a 
$G^0$-irreducible subspace
to another $G^0$-irreducible subspace.  
Thus we obtain an action of the finite group $\Gamma := G/G^0$
on the set of $G^0$-isotypical components and on the set 
of $G^0$-irreducible subspaces.

Recall that, by assumption, our group $G$ has positive dimension.    
Since its representation on $V$ has trivial copolarity, it is not polar.

The basic step is the following observation: 
 
\begin{lem}  \label{keyreducible}
Under our general assumptions,  let 
 $U_{\pm 1}$ be $G^0$-invariant subspaces 
 with $U_1 \cap U_{-1} = \{ 0 \}$.
Let $w\in G$ be a  nice involution that satisfies $w(U_{-1} )=U_1$.
Then the action of $G^0$ on $U_{\pm 1}$ is of cohomogeneity $1$.
\end{lem}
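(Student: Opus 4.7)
The plan is to show the cohomogeneity $c_1$ of $G^0$ on $U_1$ equals $1$ by combining the one-dimensional slice computation coming from the niceness of $w$ with an injectivity statement about the principal isotropy Lie algebra.

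First I use the nice involution property. At a $G^0$-regular point $p_0\in V^w$, the slice representation of $G_{p_0}=\langle w\rangle$ has a one-dimensional $(-1)$-eigenspace $\mathcal H_{p_0}^{-w}$ by \lref{boundreduced} and the codimension-one stratum description. Combined with the $w$-invariant orthogonal decomposition $V=T_{p_0}(G^0 p_0)\oplus\mathcal H_{p_0}$ and the fact that the isomorphism $\mathfrak g^0\to T_{p_0}(G^0 p_0),\,X\mapsto Xp_0$, conjugates $w$ to $\sigma:=\operatorname{Ad}_w$ on $\mathfrak g^0$, this yields
$$\dim V^{-w}=\dim(\mathfrak g^0)^{-w}+1.$$

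Next I fix a $G^0$- and $w$-invariant orthogonal decomposition $V=U_1\oplus U_{-1}\oplus W$. The $(-1)$-eigenspace splits as $V^{-w}=\{v-w(v):v\in U_1\}\oplus W^{-w}$, so writing $d=\dim U_1$, $a=\dim(\mathfrak g^0)^{-w}$, $b=\dim(\mathfrak g^0)^w$ and $w_-=\dim W^{-w}$, the identity above becomes $d+w_-=a+1$. I then pick a generic $v\in U_1$ with stabilizer Lie algebra $\mathfrak k_1$, together with some $w_0\in W^w$ such that $p_0=v+w(v)+w_0$ is $G^0$-regular; since $V^w\cap V^0_{reg}$ is open and dense in $V^w$ by niceness of $w$, such a pair $(v,w_0)$ exists for $v$ in a dense open subset of $U_1$.

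The key calculation is: for $X\in\mathfrak k_1$ with $\sigma(X)=-X$, the commutation $Xw=-wX$ forces both $X(w(v))=-w(Xv)=0$ and $w(X(w_0))=\sigma(X)(w_0)=-X(w_0)$, so $Xp_0=X(w_0)\in W^{-w}$. Since $p_0$ is $G^0$-regular the map $X\mapsto Xp_0$ is injective on $\mathfrak g^0$, producing an injection $\mathfrak k_1\cap(\mathfrak g^0)^{-w}\hookrightarrow W^{-w}$. Combined with the observation that $X\mapsto X+\sigma(X)$ sends $\mathfrak k_1$ into $(\mathfrak g^0)^w$ with kernel exactly $\mathfrak k_1\cap(\mathfrak g^0)^{-w}$, this yields $\dim\mathfrak k_1\leq b+w_-$.

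Finally, the cohomogeneity is $c_1=d-\dim\mathfrak g^0+\dim\mathfrak k_1=d-a-b+\dim\mathfrak k_1$, and the preceding estimates give $c_1\leq d-a+w_-=(d+w_-)-a=1$. Since $c_1\geq 1$ for any nontrivial orthogonal representation, I conclude $c_1=1$. I expect the main obstacle to be securing the $G^0$-regularity of $p_0$ with $v$ chosen generically in $U_1$; this is handled by projecting the open dense set $V^w\cap V^0_{reg}$ onto its $U_1$-component through the identification $V^w\cong U_1\oplus W^w$.
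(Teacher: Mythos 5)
Your proof is correct, but it takes a genuinely different route from the paper's. The paper argues directly with orbits: since $G\cdot V^w$ has codimension one, the set $G^0\cdot F_0$, where $F_0=\{u+w(u):u\in U_1\}$, must fill an open subset of the cone $\{u+v\in U_1\oplus U_{-1}: |u|=|v|\}$, and unwinding the resulting identity $hwh^{-1}u=v$ shows that a single $G^0$-orbit contains an open subset of the unit sphere of $U_1$, hence equals it when $\dim U_1\geq2$. You instead run an infinitesimal dimension count: the niceness of $w$ is converted into $\dim V^{-w}=\dim(\mathfrak g^0)^{-\sigma}+1$ --- which is precisely the dimension formula of Subsection~\ref{newsubsec}, since $\dim V^{-w}=\operatorname{codim} V^w$ and $\dim(\mathfrak g^0)^{-\sigma}=\dim G-\dim C$ --- and this is balanced against the injection $\mathfrak k_1\cap(\mathfrak g^0)^{-\sigma}\hookrightarrow W^{-w}$, $X\mapsto Xw_0$, coming from the triviality of the isotropy algebra at a $G^0$-regular point of $V^w$. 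The paper's argument buys the transitivity on spheres directly and with almost no bookkeeping; yours avoids the open-orbit step entirely and localizes everything in the Lie algebra, at the cost of a more delicate choice of base point. Two small points to tidy up: (i) your opening slice computation presupposes $G_{p_0}=\{1,w\}$ with $p_0$ on a codimension-one stratum, which holds for the point witnessing niceness but not for an arbitrary $G^0$-regular point of $V^w$; since the identity you extract is point-independent, state it at that witnessing point (or just quote the dimension formula). (ii) Density of $V^w\cap V^0_{reg}$ in $V^w$ is not needed and not immediate: openness plus non-emptiness already makes its projection to $U_1$ open and non-empty, and in fact genericity of $v$ is dispensable, since $c_1\leq\dim U_1-\dim\mathfrak g^0+\dim\mathfrak g^0_v$ holds for every $v$.
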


\begin{proof}
Denote by $F$ the subspace of fixed points of $w$ in $V$.
By Subsection~\ref{newsubsec}   the subset $G\cdot F$  
must be of codimension $1$ in $V$.
In particular, the subset $G^0\cdot F_0$ must be of codimension $1$ in 
$U_1\oplus U_{-1}$,
where $F_0$ is the space of all $(u + w(u))$ for $u\in U_1$. 
However,
$G^0\cdot F_0$ is an algebraic set that is contained in the subspace 
$\Delta$ of all
$u+ v \in U_1 \oplus U_{-1}$ with $|u|=|v|$. Hence, $G^0\cdot F_0$ 
contains an open subset
of $\Delta$. 

Thus, for some unit  $u \in U_1$ and all $v$ in an open subset of the 
unit sphere of  $U_{-1} $   there is some $u'\in U_1$ and $h\in G^0$ with 
$hu' = u$ and $hw u'= v$. In particular, we have $hwh^{-1} u =v$, thus
 $(whw) h^{-1} u = wv$.  Hence, the orbit $G^0 u$ contains an open subset 
of the unit 
sphere in $U_1$. If $\dim U_1\geq 2$, then  
$G^0$ acts transitively on the unit sphere in $U_1$.  
If $\dim (U_1)=1$, the statement is clear anyway.
\end{proof}

\subsection{Isotypical components}
Recall that the action of $G$ on $V$ is irreducible. 
Hence  the action of $\Gamma =G/G^0$ on the set
of $G^0$-isotypical components is transitive. From this 
and \lref{keyreducible} we derive:

\begin{lem} \label{isotypical}
Under the assumptions above, let $V= V_1 \oplus\cdots\oplus V_l$ be 
the decomposition
of $V$ into $G^0$-isotypical components, and assume that $l>1$. 
Then the action
of $G^0$ on each $V_i$ is of cohomogeneity $1$; in particular, 
each $V_i$ is $G^0$-irreducible.  The group  $\Gamma$ acts on the 
set $S= \{ V_1,\ldots,V_l \}$ of $G^0$-isotypical components as the 
full permutation group.
\end{lem}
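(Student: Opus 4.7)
The plan is as follows. Since $G$ acts irreducibly on $V$, the finite group $\Gamma = G/G^0$ permutes the set $S = \{V_1,\ldots,V_l\}$ of $G^0$-isotypical components transitively; as $l > 1$, this action is nontrivial. The trivial copolarity hypothesis forces the principal isotropy of $G$ to be trivial, and since $\Gamma$ acts on $V/G^0$ as a reflection group, Subsection~\ref{newsubsec} shows that $\Gamma$ is generated by (images of) nice involutions $w\in G$.

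For the cohomogeneity-one claim, I would pick a nice involution $w$ whose image in $\Gamma$ moves some component (such $w$ exists by transitivity of $\Gamma$ on the $\geq 2$-element set $S$). Then $w$ sends some $V_i$ to a distinct $V_j$, and \lref{keyreducible} applied with $U_1 = V_i$, $U_{-1} = V_j$ (these have trivial intersection as distinct isotypical components, and $w(U_{-1}) = U_1$) gives cohomogeneity~$1$ for $G^0$ on $V_i$. Any other $V_k$ has the form $\tilde\gamma(V_i)$ for some $\tilde\gamma\in G$ normalizing $G^0$, so conjugation by $\tilde\gamma$ identifies the $G^0$-action on $V_k$ with that on $V_i$ twisted by an automorphism of $G^0$, preserving the cohomogeneity. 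Hence each $V_k$ has cohomogeneity~$1$ under $G^0$ and in particular is $G^0$-irreducible.

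To show that $\Gamma$ acts as the full symmetric group $S_l$, I would first argue that every nice involution $w$ acts on $S$ either trivially or as a single transposition. Suppose instead $w$ has two disjoint $2$-cycles $(V_a,V_b)$ and $(V_c,V_d)$. Put $U_1 = V_a\oplus V_c$ and $U_{-1} = V_b\oplus V_d$; these are $G^0$-invariant with trivial intersection and satisfy $w(U_{-1}) = U_1$, so \lref{keyreducible} forces $G^0$ to act on $U_1$ with cohomogeneity~$1$. Since $\dim U_1 \geq 2$ and $G^0$ is connected, this in turn forces $G^0$ to act transitively on $S(U_1)$; but the $G^0$-orbit of any unit vector of $V_a$ remains inside the proper $G^0$-invariant subspace $V_a \subsetneq U_1$, a contradiction. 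The image of $\Gamma$ in the symmetric group is therefore a transitive subgroup generated by transpositions, and a standard graph-theoretic fact (the graph of generating transpositions is connected by transitivity and hence generates $S_l$) yields the result.

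The main obstacle is the last implication: one must ensure that cohomogeneity~$1$ on $U_1$ really forces $G^0$-transitivity on $S(U_1)$. The point is that any nonzero $G^0$-fixed vector in $U_1$ would split off a Euclidean factor and contribute an extra direction to the cohomogeneity; together with cohomogeneity~$1$ and $\dim U_1 \geq 2$, this rules out fixed vectors, so the connected action has to be transitive on $S(U_1)$, yielding the desired contradiction.
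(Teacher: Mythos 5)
Your proof is correct and follows essentially the same route as the paper's: apply \lref{keyreducible} to a nice involution that moves a component to get cohomogeneity one (the paper finds such an involution for each $V_i$ rather than transporting by transitivity, which is immaterial), then apply \lref{keyreducible} to a sum of two components to show each nice involution induces at most one transposition on $S$, so the image of $\Gamma$ is a transitive subgroup of $\mathrm{Sym}(S)$ generated by transpositions and hence all of it. The only cosmetic difference is that the paper first rules out a trivial isotypical component; your argument does not need this step, since the cohomogeneity-one conclusion and the transitivity of $\Gamma$ already exclude that scenario when $l>1$.
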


\begin{proof}
If there is a trivial $G^0$-isotypical component, 
then from the transitivity of the action of $\Gamma$,
we deduce that all isotypical components are trivial. 
Since the action of $G$ is effective, we would get that
$G$ is discrete, in contradiction to our assumption.

The group $\Gamma$ is generated by nice involutions. 
 Since the action of $\Gamma$ on~$S$ is transitive, 
for any $V_i$, we find a nice  involution $w\in G$ which moves $V_i$ to 
some~$V_j \neq V_i$. 
We use \lref{keyreducible} to see that the action of $G$ on~$V_i$ 
has cohomogeneity~$1$.
  
We claim that  such $w$ leaves all other $V_k$, $k\neq i$, $j$ invariant. 
Otherwise, we could set $U_{-1} := V_i \oplus V_k$ in 
\lref{keyreducible} and obtain that~$G^0$ acts with cohomogeneity~$1$ 
on~$V_i \oplus V_k$, which is impossible.
Thus the action of the group~$\Gamma$ on the finite set~$S$ 
is generated by transpositions. Since it is 
also transitive, the image of $\Gamma$ must be the full group of 
permutations of~$S$.
\end{proof}

In the same way we are going to deduce:

\begin{lem}  \label{onecomponent}
Assume that there is only one $G^0$-isotypical component.  Then the action
of $G^0$ on each $G^0$-irreducible subspace of $V$ is of cohomogeneity $1$.
Moreover, there is a nice  involution $w\in G$ and a decomposition of 
$V=V_1 \oplus \cdots \oplus V_l$
into $G^0$-irreducible subspaces $V_i$ such that $w(V_1)=V_2$ and 
$w(V_i)=V_i$, for all $i \geq 3$. 
\end{lem}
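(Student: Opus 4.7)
The plan is to mirror the structure of the proof of \lref{isotypical}, using \lref{keyreducible} to extract cohomogeneity-one constraints and an orthogonal-complement decomposition to furnish the trivial-intersection hypothesis. First I show that some nice involution moves a $G^0$-irreducible subspace of $V$. Suppose, toward contradiction, that every nice involution $w \in G$ preserves every $G^0$-irreducible subspace. Fix any $G^0$-irreducible decomposition $V = V_1 \oplus \cdots \oplus V_l$. Each nice involution then preserves each $V_i$, and since $G^0$ does so as well and by Subsection~\ref{newsubsec} the nice involutions together with $G^0$ generate $G$, the full group $G$ preserves each $V_i$; this contradicts the irreducibility of the $G$-action. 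Hence there exist a nice involution $w$ and a $G^0$-irreducible $U$ with $U \neq U' := w(U)$. As $U \cap U'$ is a $G^0$-submodule of the irreducible $U$, it must be trivial. Applying \lref{keyreducible} with $U_1 := U'$, $U_{-1} := U$ shows that $G^0$ acts with cohomogeneity one on $U$, hence on every $G^0$-irreducible subspace of $V$ (single isotypical component); this proves the first claim.

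For the second claim, set $V_1 := U$, $V_2 := U'$ and let $Z := (V_1 \oplus V_2)^\perp$. Then $Z$ is $G^0$-invariant (since $V_1 \oplus V_2$ is) and $w$-invariant (since $w$ is orthogonal and swaps $V_1$ and $V_2$), so I may pick any $G^0$-irreducible decomposition $Z = V_3 \oplus \cdots \oplus V_l$. I claim that $w(V_i) = V_i$ for all $i \geq 3$. Assume not, and let $V_i' := w(V_i) \neq V_i$; then $V_i, V_i' \subset Z$ and $V_i \cap V_i' = 0$ by the same Schur-type argument. Setting $U_{-1} := V_1 \oplus V_i$ and $U_1 := w(U_{-1}) = V_2 \oplus V_i'$, any vector in $U_{-1} \cap U_1$ can be written as $a_1 + a_i = b_2 + b_i'$ with $a_1 \in V_1$, $b_2 \in V_2 \subset Z^\perp$ and $a_i \in V_i$, $b_i' \in V_i' \subset Z$; then $Z \perp Z^\perp$ forces $a_1 = b_2$ and $a_i = b_i'$, while $V_1 \cap V_2 = V_i \cap V_i' = 0$ makes all four summands vanish. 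Hence $U_{-1} \cap U_1 = 0$, and \lref{keyreducible} yields that $G^0$ acts with cohomogeneity one on $V_1 \oplus V_i$. But $V_1 \oplus V_i$ is a sum of two copies of the same irreducible $G^0$-module, and the squared-norm functionals on the two summands are two algebraically independent $G^0$-invariants, so the cohomogeneity is at least two; contradiction.

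The main technical obstacle is the last step: one must select $U_{\pm 1}$ that both satisfy the trivial-intersection hypothesis of \lref{keyreducible} and lead to an impossible cohomogeneity-one conclusion. The orthogonal-complement decomposition, together with the observation that distinct $G^0$-irreducibles inside a single isotypical component always meet trivially, handles the former, and the existence of two independent squared-norm invariants on a sum of two isomorphic irreducible copies handles the latter.
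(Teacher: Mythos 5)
Your proof is correct and takes essentially the same route as the paper's: find a nice involution moving some $G^0$-irreducible subspace, apply \lref{keyreducible} to get cohomogeneity one, and then rule out $w$ moving any $V_i$ with $i\geq 3$ by a second application of \lref{keyreducible} to $U_{-1}=V_1\oplus V_i$. Your explicit verification of the trivial-intersection hypothesis via $Z\perp(V_1\oplus V_2)$, and of the impossibility of cohomogeneity one on a sum of two equivalent irreducibles, correctly fills in details the paper delegates to ``the same argument as in the proof of \lref{isotypical}.''
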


\begin{proof}
Take a $G^0$-irreducible subspace $V_1$.  Since $V_1$ is not $G$-invariant, some element of $\Gamma$ moves $V_1$ to another $G^0$-irreducible subspace.
We find a nice  involution~$w \in G$ that 
does not leave~$V_1$ invariant.  
We may apply \lref{keyreducible} and deduce that the action
of $G^0$ on $V_1$ and hence on any $G^0$-irreducible subspace has 
cohomogeneity~$1$.
Set $V_2=w(V_1)$ and choose pairwise  orthogonal 
$G^0$-invariant subspaces $V_3,\ldots,V_l$ that are orthogonal to 
$V_1$, $V_2$ and satisfy $V=V_1\oplus\cdots\oplus V_l$.  
By the same argument as in the proof of Lemma~\ref{isotypical}, 
we deduce that $w$ must leave the $V_i$, $i\geq 3$, invariant.
\end{proof}

\subsection{Hopf action and its brothers} \label{Hopfbrother}
In this subsection we are going to analyze the case of one isotypical component.
We start with the following simple observation:

\begin{lem}   \label{trivialcopolarity}  
Let a connected group $K$ act effectively with cohomogeneity~$1$ 
on a vector space $U$.
If the doubling representation of $K$ on $U\oplus U$ has trivial 
copolarity
 then  $K$ is one of the classical groups $\U1$, $\SU2 =\SP1$,
$\U2$, $\SP2$, $\U1\cdot \SP2$
 with its vector representation on $U=\C ^1$, 
$\C^2 =\mathbf H$, $\C^2$, $\Q^2$, $\mathbf H^2$, respectively.
 \end{lem}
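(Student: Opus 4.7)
The strategy is to combine the classical Borel--Montgomery--Samelson classification of compact connected Lie groups acting transitively on spheres with the necessary condition that the doubling have trivial principal isotropy. Saying that $K$ acts with cohomogeneity one on $U$ is equivalent to saying that it acts transitively on $S(U)$, so, up to orbit equivalence, the candidates for $(K, U)$ are the standard actions of $\SO n$ on $\R^n$, $\U n$ and $\SU n$ on $\C^n$, $\SP n$, $\SP n \cdot \U 1$ and $\SP n \cdot \SP 1$ on $\mathbf H^n$, together with the three exceptional actions of $\G$, $\Spin 7$ and $\Spin 9$ on $\R^7$, $\R^8$ and $\R^{16}$ respectively. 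My plan is to prune this list using the trivial-copolarity assumption.

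The necessary condition is that the principal isotropy group of $K$ on $U \oplus U$ be trivial: otherwise the fixed point set of a non-trivial principal isotropy already yields a proper generalized section. At a generic pair $(v, w)$ this isotropy equals $K_v \cap K_w$, which in turn coincides with the principal isotropy group of the representation of $K_v$ on $v^\perp \subset U$. A direct computation for each Borel family gives the subgroups $\SO{n-2}$, $\U{n-2}$, $\SU{n-2}$, $\SP{n-2}$, etc., which are non-trivial in almost all cases; for the three exceptional actions the isotropy subgroups $\SU 3$, $\G$ and $\Spin 7$ all act on the tangent sphere with non-trivial principal isotropy. After this step the only candidates that survive are $\U 1$ on $\C$, $\SO 3$ on $\R^3$, $\SU 2 = \SP 1$ on $\mathbf H$, $\U 2$ on $\C^2$, $\SU 3$ on $\C^3$, $\SP 2$ on $\mathbf H^2$, and $\U 1 \cdot \SP 2$ on $\mathbf H^2$.

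The two intruders $\SO 3$ and $\SU 3$ are then ruled out by exhibiting orbit-equivalent enlargements whose principal isotropy is non-trivial. For $\SO 3$ on $\R^3 \oplus \R^3$ the element $-\mathrm{Id}\in \OG 3$ acts on a generic pair $(v, w)$ by the rotation of angle $\pi$ about the axis orthogonal to $\mathrm{span}(v, w)$, which already lies in $\SO 3$; hence $\SO 3$ and $\OG 3$ have the same orbits on $\R^6$, while $\OG 3$ has principal isotropy $\Z/2$ generated by reflection across $\mathrm{span}(v, w)$, whose fixed point set is the proper $4$-dimensional generalized section $\mathrm{span}(v, w) \oplus \mathrm{span}(v, w)$. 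For $\SU 3$ on $\C^3 \oplus \C^3$ the analogous argument uses the diagonal element $\mathrm{diag}(e^{i\theta}, e^{i\theta}, e^{-2i\theta}) \in \SU 3$ to realize every scalar rotation by $e^{i\theta}$ inside an $\SU 3$-orbit, so $\SU 3$ and $\U 3$ are orbit-equivalent, and $\U 3$ has principal isotropy $\U 1$ producing a proper generalized section.

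It remains to verify that each of the five listed actions actually has trivial copolarity. For $\U 1$ on $\C^2$ this is immediate: the quotient is the Euclidean cone over $\C P^1$ with its (non-flat) Fubini--Study metric, so the action is not polar, and since the cohomogeneity equals $\dim_\R(\C^2) - 1$, the constraint $\dim \Sigma \geq \dim \mathcal H_p = 3$ forces any proper generalized section to be a $3$-dimensional section, which is impossible. For the other four cases, analogous arguments based on the geometry of the quotient, together with a verification that the listed groups are maximal within their orbit-equivalence classes, complete the proof. I expect this last verification to be the main obstacle: showing that \emph{no} proper generalized section exists is subtler than the trivial-principal-isotropy check, and may require case-specific computations of the quotient metric or a careful analysis of orbit-equivalent enlargements for each of the four remaining cases.
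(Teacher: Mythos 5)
Your proof is correct and follows essentially the same route as the paper: run through the Borel--Montgomery--Samelson list of transitive sphere actions, discard every case whose doubling has non-trivial principal isotropy, and eliminate the two survivors $\SO3$ and $\SU3$ by orbit-equivalence with $\OG3$ and $\U3$ respectively (the paper states this last step more tersely, invoking the general fact from its preliminaries that a strict orbit-equivalent enlargement forces non-trivial copolarity, whereas you exhibit the resulting generalized sections explicitly). The final paragraph that you flag as ``the main obstacle'' is not actually needed: the lemma asserts only the one-way implication that trivial copolarity of the doubling forces $K$ onto the list, not that each listed action really has trivial copolarity (indeed the paper's subsequent remark shows the authors make no such claim for $\U2$ and $\U1\cdot\SP2$), so your argument is already complete after the third step.
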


\begin{proof}
The representations of cohomogeneity $1$ are listed 
in~Subsection \ref{transac}.
Going through the list, one observes  
that if $K$ acts on $U\oplus U$ with trivial principal isotropy 
group then $K$ is either one of the groups above, 
or $K=\SO3$, or $K=\SU3$.

However, the action of $\SO3$ (resp.~$\SU 3$) on $\R^3\oplus\R^3$
(resp.~$\C ^3 \oplus \C ^3$) is 
orbit equivalent to the action  of $\OG3$ (resp.~$\U 3$). 
Thus it has non-trivial copolarity.
\end{proof}

\begin{prop}
Under the general assumptions of this section, assume that there is only one 
$G^0$-isotypical component in $V$.   Then $G^0$ is  
one of the groups  $\U1$, $\U2$, $\U1\cdot\SP2$, $V=V_1\oplus V_2$,
and $V_1$, $V_2$ are the vector representations
of $G^0$, i.e., $\mathbf C$, $\mathbf C^2$, $\mathbf H^2$, respectively.
\end{prop}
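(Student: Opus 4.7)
The plan is to combine the structural output of Lemma~\ref{onecomponent} with the classification in Lemma~\ref{trivialcopolarity}, and then to eliminate the two extra cases from that classification via Schur's lemma.  First I would invoke Lemma~\ref{onecomponent} to obtain a decomposition $V = V_1 \oplus \cdots \oplus V_l$ into mutually $G^0$-isomorphic, $G^0$-irreducible subspaces on each of which $G^0$ acts with cohomogeneity one, together with a nice involution $w \in G$ such that $w(V_1) = V_2$ and $w(V_i) = V_i$ for $i \geq 3$.  As mentioned in Subsection~\ref{copolar}, the copolarities of $G$ and of $G^0$ coincide, so $G^0$ also acts on $V$ with trivial copolarity and, in particular, with trivial principal isotropy.

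The main obstacle will be to force $l = 2$.  Because $V$ is $G$-irreducible, the finite group $\Gamma = G/G^0$ must permute $\{V_1,\dots,V_l\}$ transitively; moreover, by Subsection~\ref{newsubsec}, $\Gamma$ is generated by the images of nice involutions.  Supposing for contradiction that $l \geq 3$, I would analyze additional nice involutions alongside $w$: on each non-swapped summand $V_i$, a nice involution is $G^0$-equivariant up to the inner automorphism it induces on $G^0$, so Schur's lemma on the irreducible $G^0$-module $V_i$ (whose commutant is one of $\R$, $\C$, $\mathbf H$) rigidifies its action there down to a single scalar.  Taking the $(+1)$-eigenspace $\{u+w(u):u\in V_1\}$ of $w$ inside $V_1 \oplus V_2$ together with the compatible eigenspaces produced by the remaining nice involutions, one expects to assemble a proper nonzero subspace of $V$ that is stable under every generator of $\Gamma$ as well as under $G^0$, and hence under all of $G$, contradicting $G$-irreducibility.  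The delicate point is orchestrating the compatibility between distinct nice involutions so that their eigenspace decompositions align.

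Once $l = 2$ is established, $V = V_1 \oplus V_2$ is exactly the doubling of the effective action of the quotient $\bar G^0$ of $G^0$ by the kernel on $V_1$.  Since this doubling inherits trivial copolarity (and in particular trivial principal isotropy) from $G^0$, Lemma~\ref{trivialcopolarity} applied to $\bar G^0$ on $V_1$ yields that $\bar G^0$ is one of $\U1$, $\SU2 = \SP1$, $\U2$, $\SP2$ or $\U1 \cdot \SP2$ acting by its vector representation on $V_1$.

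It then remains to rule out $\bar G^0 = \SU2$ and $\bar G^0 = \SP2$.  In both of these cases the commutant $\mathrm{End}_{G^0}(V_1)$ equals $\mathbf H$ acting by right multiplication, so every $G^0$-equivariant isomorphism $V_1 \to V_2$ has the form $v \mapsto v \alpha$ for some unit $\alpha \in \mathbf H$.  The $(+1)$-eigenspace $\{(v, v\alpha) : v \in V_1\}$ of $w$ is then both $G^0$-invariant and $w$-invariant, hence a $\langle G^0, w \rangle$-invariant subspace of $V$ of exactly half its dimension.  A brief analysis of the remaining nice involutions of $G$, which are subject to the same Schur-type constraint, shows that this subspace can be enlarged to or intersected against a proper nonzero $G$-invariant subspace, contradicting $G$-irreducibility once more.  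Therefore $G^0 \in \{\U1,\,\U2,\,\U1 \cdot \SP2\}$ with $V_1$ equal to $\C$, $\C^2$ or $\mathbf H^2$ respectively, as claimed.
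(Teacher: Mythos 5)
There is a genuine gap, and it occurs at both critical steps, for the same reason: you never use the defining property of a \emph{nice} involution $w$, namely that $G^0\cdot\mathrm{Fix}(w)$ has codimension one in $V$ (Subsection on nice involutions). Irreducibility of $G$ plus Schur's lemma, which is all your argument actually invokes, cannot carry the proof. First, for the reduction to $l=2$: the $(+1)$-eigenspace $\{u+w(u):u\in V_1\}$ is \emph{not} $G^0$-invariant in general, because $w|_{V_1}$ (composed with the identification $V_2\cong V_1$) is only an element $p$ of the normalizer of $G^0$ in $\OG{V_1}$, not of its centralizer; so the invariant subspace you hope to assemble does not exist at the first step, and the ``delicate point'' you flag is exactly where the proof is missing. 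The paper's argument is entirely different: writing $w(v_1,\dots,v_l)=(p^{-1}(v_2),p(v_1),p_3(v_3),\dots)$ with $p^2,\,pp_i^{-1}$ in the centralizer, it observes that for $l\geq3$ every point of $\mathcal S=G^0\cdot\mathrm{Fix}(w)$ satisfies the two independent closed conditions $|u_1|=|u_2|$ and $(u_2-c(u_1))\perp c(u_3)$, so $\mathcal S$ has codimension $\geq2$ unless $\dim V_1=1$ --- contradicting that $w$ is nice.

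Second, the exclusion of $\SU2=\SP1$ and $\SP2$ by producing a proper $G$-invariant subspace cannot work even in principle: there \emph{do} exist finite extensions of the diagonal $\SP2$ on $\mathbf H^2\oplus\mathbf H^2$ that act irreducibly (adjoin the swap together with $(v_1,v_2)\mapsto(v_1,-v_2)$; the commutant of the resulting group is $\mathbf H\cdot\mathrm{id}$). So irreducibility does not distinguish $\SP2$ from the surviving cases $\U2$ and $\U1\cdot\SP2$ --- indeed the remark following the proposition in the paper stresses that the latter two admit such reflection extensions. What kills $\SP m$ is again the quantitative codimension-one condition: the paper shows that every $(u_1,u_2)\in\mathcal S$ satisfies $u_2\perp(u_1-p^2(u_1))$, forcing $p^2=1$, and then, replacing $p$ by $cp$ for an arbitrary element $c$ of the centralizer $C$, that $(cp)^2=1$ for \emph{all} $c\in C$. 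When $G^0=\SP m$ one has $N=\SP m\cdot\SP1$ and $C=\SP1$, and writing $p=\bar A\cdot\bar p$ this would require $(c\bar p)^2=\pm1$ for every unit quaternion $c$, which is impossible. Your Schur's-lemma observation that equivariant maps $V_1\to V_2$ are right quaternion multiplications is also not quite applicable as stated, since $w|_{V_1}$ is equivariant only up to the automorphism $g\mapsto g^w$ of $G^0$. To repair the proof you must bring the codimension-one property of $G^0\cdot\mathrm{Fix}(w)$ into both steps.
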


\begin{proof}
Consider an involution $w$ in $G$  
and a decomposition  $V=V_1 \oplus\cdots\oplus V_l$
into $G^0$-irreducible subspaces, as in \lref{onecomponent}.  
Since $G$ acts effectively,
the assumption that there is only one isotypical component implies that 
$G^0$ acts effectively on $V_1$. We identify $G^0$ with its image in 
$\SO{V_1}$ and recall that it acts on $V_1$ with cohomogeneity~$1$. 
Since there is only one isotypical component, we fix ($G^0$-equivariant) 
identifications of $V_i$ with $V_1$ for all $i$.

We write elements in $V$ as $(v_1,\ldots,v_l)$, 
for $v_i\in V_i =V_1$.
Then the action of $G^0$ on $V$ is given in these coordinates by 
$g\cdot(v_1,\ldots,v_l)=(g \cdot v_1,\ldots,g\cdot v_l)$. 
Moreover, by assumption,
there are isometries ~$p_1,\ldots,p_l:V_1\to V_1$    
such that the action of $w$ is given by
$$w(v_1,\ldots,v_l) = (p_2(v_2),p_1(v_1),p_3(v_3),\ldots,p_l(v_l)).$$

Since $w$ is an involution, we have $p_2=p_1 ^{-1}$ and $p_i ^2 =1$, for $i\geq 3$. We set $p=p_1$. For an element $g\in G^0$, 
the conjugation $g^w=w^{-1} g w$ acts as
$$g^w (v_1,\ldots,v_l)=(g^p (v_1),g^{p^{-1}} (v_2), g^{p_3} (v_3),...,g^{p_l} 
(v_l)).$$
 
By assumption, $w$ normalizes $G^0$.  Thus we must have 
$g^p = g^{p^{-1}}= g^{p_i}$, for all $i\geq 3$.   
And this element $g^p$ is contained  in $G^0$.
Denoting by $N$ and $C$ the normalizer and the centralizer of 
$G^0$ in $\OG{V_1}$,
we deduce: $p$, $p_i\in N$ and $p^2$, $pp_i^{-1}\in C$ for $i\geq3$. 
  
Let $F$ be the subspace of fixed points of $w$ in $V$. 
It consists of all elements of the form
 \[ (v_1,p (v_1),f_3,\ldots,f_l), \] where $v_1\in V_1$ is arbitrary 
and $f_i \in V_1$ is fixed by $p_i$.  Since~$w$ is a 
nice involution, the set 
$\mathcal S =G^0 \cdot F$ is of codimension $1$ in $V$.
 
First, assume  $l\geq 3$. 
Then there is some $c\in C$ with $p =c\cdot p_3$. 
Thus any element $(u_1,\ldots,u_l) \in \mathcal S$ has the 
first three coordinates given
by $(u_1,u_2,u_3) =(g(v),g(c\cdot p_3 (v)),g(f_3))$, 
for some $g\in G^0$,  some $v\in V_1$ and some $f_3 \in V_1$  
fixed by the involution $p_3$.  
 
Hence we have $|u_1|=|u_2|$ and $(u_2-c(u_1))\perp c(u_3)$.
The assumption $\dim \mathcal S=\dim V-1$ implies $u_2 =c(u_1)$, and, therefore,
$\dim (V_1)=1$.  Then $G$ is discrete in contradiction to our assumption.
We deduce $l\leq 2$, so $l=2$.
  
Using \lref{trivialcopolarity} and our assumption 
that $G$ acts with trivial copolarity,  
we deduce that if the lemma does not hold, 
then $G^0=\SP m$ for $m=1$ or $m=2$ and $V_1 = \mathbf H^m$.
Thus we only need to exclude these two cases.

Under the assumption $l=2$, we   have  
\[ \mathcal S=G^0\cdot F= \{(v,p^g (v))\; |\; g\in G^0,\, v \in V_1 \}. \]
We have 
$$\langle p^g(v), v \rangle  = \langle(p^g)^2 (v), p^g (v)\rangle = 
\langle (p^2) ^g (v) , p^g (v)\rangle =
\langle p^g (v), p^2 (v)\rangle,$$ 
since $p^2 \in C$.  Thus  all elements $(u_1,u_2)$ in $\mathcal S$ satisfy
$|u_1|=|u_2|$ and $u_2 \perp  (u_1 - p^2 (u_1))$. 
Since $\mathcal S$ has codimension $1$, we deduce  that $p^2$ must be the 
identity.

Let now $c$ be an arbitrary element in $C$.  
Then $cp \in N$ and $(cp)^2=c(pcp^{-1})p^2 \in C$.
The same calculation as above reveals,  
$(cp)^g (v) \perp (v - (cp)^2 (v))$, hence
$p^g (v) \perp c^{-1}  (v- (cp) ^2 (v) )$.  
Again the assumption that $\mathcal S$ has codimension $1$ implies 
that $(cp)^2$ is the identity. Thus we have shown that 
for all $c\in C$ the equality $(cp)^2 =1$ must hold.

If now $G^0= \SP m$ then  its normalizer is $N = \SP m \cdot \SP1$ and 
its centralizer is $C = \SP1$. 
If the involution $p\in N$ is given by 
$\bar A \cdot \bar p$, for $\bar A \in \SP m$, $\bar p\in \SP1$, then we 
have $\bar A ^2 = \pm 1$. 
From above we deduce that for all $c \in \SP1$
we must have $(c \cdot \bar p ) ^2 =\pm 1$. But this is impossible.
 \end{proof}

\begin{rem}
Unfortunately, by this type of arguments it is impossible to exclude the 
remaining cases $G^0= \U2, G^0 =\U1 \cdot \SP 2$.
In fact, one can find an extension of the action of such $G^0$ on 
$V_1\oplus V_1$
by an involution $w$ such that
$w$ acts as a reflection on $(V_1\oplus V_1)/G^0$.
\end{rem}

We finish this subsection by noting that the Hopf action of $\U 1$ 
on $\C ^2$ is equivalent to 
the action of the unit torus of $\SU 2$ on $\C ^2$. Thus we have 
finished the proof of \pref{mainproposition},
under the additional assumption that the action of $G^0$ has only 
one isotypical component.

\subsection{Generalized toric actions I}  \label{gen1}
Now we proceed with the examination of the case of several isotypical 
components. In this and in the next subsection
we will work under the general assumptions of the present section, 
and assume, in addition, that the action
of $G^0$ on $V$ has several isotypical components. We will denote by 
$V=V_1 \oplus\cdots\oplus V_l$ the unique decomposition into 
isotypical components. Due to \lref{isotypical}, the group  $G^0$ 
acts on all~$V_i$ irreducibly and with cohomogeneity~$1$.
Moreover, the group $\Gamma =G/G^0$ acts on $S= \{ V_1,\ldots,V_l\}$ 
as the full group of permutations. 

\begin{prop} \label{torus}
Assume, in addition, that $G^0$ is a
$k$-dimensional
torus $T$. Then the action of $G^0$ on $V$ can be identified with
the action of the maximal torus of the special unitary group $\SU {k+1}$
on $\C^{k+1}$.
\end{prop}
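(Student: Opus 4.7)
The plan is to combine the structure theory of torus representations with the representation theory of the symmetric group. Since $T = G^0$ is a torus acting irreducibly with cohomogeneity one on each $V_i$, and the only non-trivial irreducible real representations of a torus are the 2-dimensional ones arising from characters, $V_i \cong \C$ with $T$ acting by a character $\chi_i : T \to \U1$. Distinct isotypical components force $\chi_i \neq \pm \chi_j$ for $i \neq j$. Trivial copolarity implies trivial principal isotropy for $T$, whence $\bigcap_i \ker \chi_i = \{1\}$; so the $\chi_i$ generate $\hat T \cong \Z^k$ as an abelian group, and $l \geq k$. If $l = k$ the $\chi_i$ form a $\Z$-basis and $T$ coincides with the standard diagonal torus $\U1^k \subset \U{l}$, making the action polar and contradicting trivial copolarity; hence $l \geq k + 1$.

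Next, since $T$ is abelian, conjugation yields a well-defined homomorphism $\phi : \Gamma \to \mathrm{Aut}(\hat T)$. Any lift $g \in G$ of $\sigma \in \Gamma$ restricts to an $\R$-linear isometry $V_i \to V_{\sigma(i)}$ which is either $\C$-linear or $\C$-antilinear, so $\phi_\sigma(\chi_i) = \epsilon_{i,\sigma}\, \chi_{\sigma(i)}$ with signs $\epsilon_{i,\sigma} \in \{\pm 1\}$. By \lref{isotypical}, $\Gamma$ acts on $\{V_1,\ldots,V_l\}$ as the full symmetric group $S_l$, so the stabilizer of $\chi_1$ in $\Gamma$ projects into the $V_1$-stabilizer $S_{l-1}$ with index $1$ or $2$; the $\Gamma$-orbit of $\chi_1$ therefore has size either $l$ (no signs, after a suitable choice of representative in each pair $\{\pm \chi_i\}$) or $2l$ (index two, the stabilizer being $A_{l-1}$, which forces $\epsilon_{i,\sigma} = \mathrm{sgn}(\sigma)$ whenever $\sigma$ fixes $i$ in $S_l$). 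In either case I consider the $\Gamma$-equivariant surjection $\Z^l \twoheadrightarrow \hat T$, $e_i \mapsto \chi_i$, with respect to the plain (resp.\ signed) permutation action on $\Z^l$. Tensoring with $\R$ gives $\R^l = \mathrm{triv} \oplus \mathrm{std}$ in the first case, and (via the character computation $\mathrm{tr}(\sigma) = \mathrm{sgn}(\sigma)\,|\mathrm{Fix}(\sigma)|$) $\R^l \cong \mathrm{sgn} \oplus (\mathrm{sgn} \otimes \mathrm{std})$ in the second. Running through the possible $S_l$-quotients and excluding those giving polar actions, coincident characters, or zero characters, in both situations only the $(l-1)$-dimensional summand survives, and one concludes $l = k + 1$.

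Finally, the kernel of $\Z^{k+1} \twoheadrightarrow \hat T$ is a rank-one, $\Gamma$-invariant sublattice generated by some primitive vector $(a_1, \ldots, a_{k+1})$ with $\sum_i a_i \chi_i = 0$. For any transposition $\sigma = (ij) \in S_l$, invariance of this generator under the (signed) action of $\sigma$ gives $|a_i| = |a_j|$; transitivity of transpositions makes all $|a_i|$ coincide, and primitivity forces $a_i \in \{\pm 1\}$. Replacing each $\chi_i$ by $a_i \chi_i$ --- equivalently, reversing the complex structure on $V_i$ wherever $a_i = -1$ --- produces characters satisfying $\sum_i \chi_i = 0$ that still generate $\hat T$, which identifies the real $T$-representation on $V$ with the action of the maximal torus of $\SU{k+1}$ on $\C^{k+1}$. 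The main obstacle is managing the sign ambiguity in the orbit analysis: the orbit of size $2l$ forces the use of a signed-permutation representation on $\Z^l$, and unifying the two cases depends on the character computation identifying $\R^l \cong \mathrm{sgn} \oplus (\mathrm{sgn} \otimes \mathrm{std})$ together with the primitivity constraint on the kernel generator.
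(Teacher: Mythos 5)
Your strategy is genuinely different from the paper's: the paper fixes a $\Gamma$-invariant inner product on $\mathfrak t$, observes that the lines determined by the $d\phi_i$ are equiangular, and proves by an inductive spherical-geometry lemma that $l>k$ equiangular lines permuted by the full symmetric group must be $k+1$ lines through the vertices of a regular simplex. You instead work with the character lattice and the submodule structure of the (signed) permutation module $\Z^l$. The skeleton is viable, the opening reductions ($l\geq k+1$, effectiveness giving $\langle\chi_i\rangle=\hat T$) are correct, and your final step (the kernel of $\Z^{k+1}\twoheadrightarrow\hat T$ is a rank-one saturated invariant sublattice with primitive generator $(a_1,\dots,a_{k+1})$, $|a_i|$ constant, hence $a_i=\pm1$ and $\sum a_i\chi_i=0$) is clean.

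The gap is in the middle step, where you reduce the $\Gamma$-action on $\hat T$ to exactly two module structures on $\Z^l$. The image of $\Gamma$ in the hyperoctahedral group $\{\pm1\}^l\rtimes S_l$ can be \emph{any} subgroup $\Gamma'$ surjecting onto $S_l$, and your dichotomy does not capture this. Concretely: if $\Gamma'$ is the full hyperoctahedral group, the orbit of $\chi_1$ has size $2l$ but its stabilizer still surjects onto $S_{l-1}$ (via elements with $\epsilon_1=+1$), so the claim that an orbit of size $2l$ forces the stabilizer to be $A_{l-1}$ and $\epsilon_{i,\sigma}=\mathrm{sgn}(\sigma)$ is false. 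Worse, for such $\Gamma'$ (and for several intermediate subgroups) the module $\mathbf Q^l$ is \emph{irreducible}, so there is no $(l-1)$-dimensional summand to ``survive''; those cases must instead be killed by the dimension count $k\in\{0,l\}$, which your argument never invokes. To complete your route one must show that whenever $\mathbf Q^l$ is reducible over $\Gamma'$ it splits as $1\oplus(l-1)$ with the line spanned by a sign vector $(d_1,\dots,d_l)$ — this amounts to classifying the $S_l$-invariant sign patterns (two-graphs), which are only $\pm D(J-I)D$ for diagonal sign matrices $D$ — and then run the exclusion of the quotients of dimensions $0$, $1$ and $l$ that you only assert. There is also a smaller unaddressed point: the kernel of $\Gamma\to S_l$ may act nontrivially on $\hat T$ by signs, so $\Z^l$ is a priori only a $\Gamma'$-module, not an $S_l$-module, and the decompositions $\mathrm{triv}\oplus\mathrm{std}$ and $\mathrm{sgn}\oplus(\mathrm{sgn}\otimes\mathrm{std})$ cannot be quoted as written. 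The paper sidesteps all of this by working with unoriented lines in $\mathfrak t$ from the start.
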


\begin{proof}
Since all irreducible non-trivial representations of $T$ have dimension~$2$,
all $V_i$ are two-dimensional.   Thus each $V_i$ is given by a character 
$\phi _i :T \to S^1$ which is defined up to  conjugation,
i.e., it is given by a (integral) linear functional $d\phi_i$ 
on the Lie algebra $\Lt$ of $T$, which is defined up to sign.
Since each irreducible summand of $V$ is an isotypical component,
each $d\phi_i$ occurs exactly once, and $\Gamma = G/G^0$ acts 
as the full permutation group on the set of all $d\phi_i$.

We fix a $\Gamma$-invariant flat metric on $T$. 
Then the $d\phi_i$ have all the same length and we can identify 
each $d\phi_i$ with the line $t_i$ which is orthogonal to the 
kernel of $d\phi_i$ in $\Lt$.
The image $Q$ of $\Gamma$ in the orthogonal group $\OG k= 
\OG{\mathfrak t}$
permutes the $l$ elements $t_i$ of the projective space 
$\R P^{k-1} = P (\mathfrak t)$.  

Since the action of $T$ is effective, the lines $t_i$ span $\mathfrak t$, 
thus  $l\geq k$.
If $l=k$, then $T$ is a maximal torus of $\OG V$ and the action of $T$ on 
$V$ is polar 
(hence of non-trivial copolarity). Thus $l\geq k+1$. 
As we have seen, $Q$ permutes the finite 
set  $S$ of all $t_i$ and acts as the full permutation group of this set~$S$. 
Thus the distance between each pair of different $t_i$, $t_j$ in 
the projective space does not depend on $i$ and $j$. Therefore the 
lines $t_i$ are \emph{equiangular}
in the Euclidean space $\mathfrak t$. In general,
the sets of equiangular lines can be quite large and difficult to 
understand (cf.~\cite{equiang}). However, 
in the presence of the
large group $Q$, they can be described quite easily:
 
\begin{lem}
Let $\mathfrak t$ be a $k$-dimensional Euclidean vector space. 
Let $S$ be a set of lines ($1$-dimensional subspaces)
in $\mathfrak t$ that consists of $l>k$ elements and generates $\mathfrak t$.
Let $Q$ be a finite subgroup of $\OG{\mathfrak t}$ that leaves~$S$ 
invariant and acts on~$S$ as the full group of permutations of~$S$.  
Then $l=k+1$ and the lines of~$S$ are generated by the vertices of a 
regular simplex centered at the origin  in~$\mathfrak t$. 
\end{lem}
  
\begin{proof}
We proceed by induction on $k$. For $k=2$ the claim is easily 
verified (for $k=1$, such sets of lines do not exist).
Assume the statement is already shown for the dimension $k-1$,
for some $k\geq3$. 

Let $\alpha \leq  \pi /2$ be the angle between any pair of lines from $S$, 
which by assumption does not depend on the
pair. If $\alpha =  \pi /2$, all lines are pairwise orthogonal, thus 
the number of lines is bounded from above by $k$,
in contradiction to our assumption.  If $3$ lines $t_1$, $t_2$,
$t_3 \in S$ 
lie in a plane, we must have $\alpha = \pi /3$.
Choose another line $t_4$. We can choose unit vectors $X_i$ on $t_i$ such 
that  the spherical distances (i.e., the Euclidean angles)  satisfy 
$d(X_1,X_2)=d(X_2,X_3)=d(X_2,X_4)= \pi /3$. Since $X_4$ does not 
coincide with $\pm X_1$ or $\pm X_3$, its distances
to $X_1$ and $X_3$ are less than $2\pi /3$. Thus they must be equal 
to~$\pi/3$. But such quadruple of points does not exist
in the unit sphere $S^2$.

Thus $\alpha <\pi /2$ and any three lines in $S$ 
generate a $3$-dimensional space.  
Consider the subgroup $Q_1$ of $Q$ that
 leaves $t_1$ invariant.  Then it acts on $\mathfrak t_1$, the orthogonal 
complement of $t_1$ and it preserves
 the set $S_1$ of lines in $\mathfrak t_1$ that are  projections of lines 
in $S$ different from $t_1$.
 Since no three lines lie in a plane, projections of different lines are  
different, thus $S_1$ has $k-1$ elements.
 By assumption, $Q_1$ acts as the full permutation group
of  $S_1$. By our inductive 
assumption, $l-1=(k-1)+1$. Hence $l=k+1$.
 Moreover, by induction, the lines in $S_1$ are given by the vertices 
$Y_2,\ldots,Y_l$ of a regular simplex in $\mathfrak t_1$.
 
Fix a unit vector $X_1$ on $t_1$. 
Choose the unit vector $\bar X_i$ on $t_i, i=2,\ldots,k+1$, 
with $d(X_1,\bar X_i)=\alpha$. Then $\bar X_i$ lies on
the spherical geodesic from $X_1$ to $Y_i$.  
Moreover, by our assumption, for all $i\neq j$, we have either 
$d(\bar X_i,\bar X_j) =\alpha$ or $d(\bar X_i,\bar X_j)= \pi- \alpha$. 
However, the angle between $X_1Y_i$ and $X_1Y_j$ is larger than $\pi /2$.
Thus the triangle $X_1 \bar X_i \bar X_j$ cannot be equilateral. 
Hence $d(\bar X_i, \bar X_j)= \pi -\alpha$ for all 
$i\neq j$. Therefore if we set $X_i= -\bar X_i$ for $i\geq2$,
the $(k+1)$ unit 
vectors $X_i$ have the same pairwise distances, given by
$\pi -\alpha$. Thus $X_i$ are the vertices of a regular simplex.  
\end{proof}
  
Now we can finish the proof of \pref{torus}.
We deduce form the last lemma that $l=k+1$ and that, 
up to an eventual change of $\phi _i$ by its conjugate 
(that does not effect the representation of the torus $T$), 
the homomorphisms $\phi _i :T\to S^1$  have differentials $d\phi_i$ that
are the vertices of a regular simplex. Hence these differentials  
satisfy $d\phi_1+d\phi_2+...+d\phi_{k+1} =0$.  
But this is exactly the defining 
equation of the maximal torus of $\SU {k+1}$.
\end{proof}

\subsection{Generalized toric actions II} \label{gen2}
In this subsection we are going to finish the analysis of the 
situation where the action of $G^0$ on $V$ has several isotypical 
components $V=V_1\oplus\cdots\oplus V_l$. Here we assume that $G^0$ 
is not commutative. Since all isotypical components are permuted  
by $\Gamma =G/G^0$, the images~$K_i$ of~$G^0$ in the isometry group of 
the isotypical components $V_i$ are congruent inside $\OG V$.

Denote by $K'$ the image of $G^0$ in $\OG{V_1}$. Since $G^0$ is contained 
in a product of groups isomorphic to $K'$, the group $K'$ is not commutative. 
Going through the list of groups $K'$ acting with cohomogeneity one on a 
vector space, we get three cases that we will analyze separately: 
$K'$ is covered by a simple group $K$; $K'$ is covered by a group $K$
with two different factors; $K'$ is covered 
by $K=\SP 1 \times \SP 1$.

{\bf Case 1.}
Let us assume that $K$ is a  simple group.
Then $G^0$ is a finite quotient of a product $K^m$.
Moreover, for any component $V_i$, exactly one of the $m$ factors 
is mapped not to the identity in 
$\SO{V_i}$.  If $K\neq\Spin8$ then there is exactly one  
irreducible representation of $K$ with cohomogeneity $1$
on a vector space of the fixed dimension 
$\dim V_1=\dim V_2=\cdots=\dim V_l$.
Thus, in this case, $m=l$ and 
for each $j=1,\ldots,m$ 
there is exactly one $V_i$ such that the $j$-th factor $K$ is 
mapped non-trivially into $\SO{V_i}$. 
We deduce that the action of $G^0$ on $V$ is an $l$-fold  
product of the actions of $K$ on $V_1$.  
However, this action is polar.  This contradicts our assumption.

If $K=\Spin8$ then $K$ has $3$ representations 
$\rho_1$, $\rho_2$, $\rho_3$ of cohomogeneity~$1$ on 
$\mathbf R^8$ that are permuted by the triality automorphism.  
Then the only part of the representation of $G^0$ on which
a fixed factor $K$ may act non-trivially is a sum of different  
$\rho_j$, $j=1$, $2$, $3$, i.e., a vector space of dimension at 
most $24$. However, $\dim\Spin8=28$. 
Thus the action of $K$ on this subspace has non-trivial principal isotropy groups.  Then the action of $G^0$ on $V$
has non-trivial principal isotropy groups, as well. 
Thus it has non-trivial copolarity in contradiction to our assumption.

{\bf Case 2.} 
Now, we assume that the image $K'$ is covered by a product $K$ of two 
different factors (i.e., $\U1\times\SU n$, 
$\U1\times\SP n$, $\SP1\times\SP n$, 
for $n\geq 2$). In all cases,
the action of one factor $\hat K$ of $K$ 
is still of cohomogeneity $1$.
Consider the connected normal subgroup $N$ of $G^0$ 
whose Lie algebra is the sum of all factors 
isomorphic to $\hat {\mathfrak k}$, the corresponding Lie algebra. 
Since the factors of $K$ are different, $N$ is normalized by $G$. 

Consider the 
decomposition $V=W_1\oplus\cdots\oplus W_r$ into 
$N$-isotypical components.  Since $G^0$ acts on $N$ via inner 
automorphisms, any element of $G^0$ preserves
each summand $W_i$. Any element $g\in G$ permutes the $N$-isotypical 
components.  If $r>1$ then, arguing
as in \lref{isotypical},
we deduce that the action of $G^0$ on each $W_i$ has cohomogeneity $1$, 
hence each $W_i$ coincides with some $V_j$.
As in the previous case, the action of $N$ on $V$ is a direct product 
of cohomogeneity $1$ actions of $N$ on $V_i$.
We deduce that the action of $G^0$ is orbit equivalent to the action 
of $N$ on $V$ and that both actions are polar. Contradiction.

Therefore we may assume that $r=1$. Then $N$ has only one factor, i.e., 
$N$ is locally isomorphic to~$\hat K$, which is either 
$\SU n$ or $\SP  n$. The representation of $N$ on $V$ is given by  
the $l$-fold sum of the vector representation $V_1$ 
of $\hat K$.  Any 
nice involution~$w\in G$  normalizes~$N$.  
Arguing as in Subsection \ref{Hopfbrother}, we get a presentation of $w$ 
as $w(v_1,\ldots,v_l)=(p^{-1} (v_2),p(v_1),p_3 (v_3),\ldots,p_l(v_l))$,
for some involutions $p_i \in\OG{V_1}$ and some isometry $p\in\OG{V_1}$ 
such that they all normalize
$\hat K$, and such that $p^2$, $ p p_i^{-1} $ lie
in the centralizer of $\hat K$
inside $\OG{V_1}$, for $i\geq3$.
 
Assume $l\geq 3$.
The set $F$ of fixed points of $w$ is given by all 
$(v, p(v),f_3,\ldots,f_l)$ with $v\in V_1$ and the $f_i$ fixed by
$p_i$.  The extension of the action of $N$ to the action of $G^0$ 
is given by some (complex, respectively, quaternionic) scalar 
multiplications in each 
component.
Note that $p$ and $p_i$
induce the same  action  on the projective spaces $\C P^{n-1}$ and $\mathbf HP ^{n-1}$, respectively.
Thus the set  $\mathcal S=G^0\cdot F$ is contained  
in the set of all points with 
the first three coordinates $(u_1,u_2,u_3)$ given by  
$(g(v)\lambda_1, g(p_3(v))\lambda_2, g (f_3 )\lambda_3)$  for some  
scalars~$\lambda_i$, some element $g\in \hat K$, 
some $v\in V_1$ and some~$f_3$ 
in~$V_1$ fixed by~$p_3$.
  
We have $|u_1|=|u_2|$. 
Moreover, the ``lines'' $[u_1]$, $[u_2]$, $[u_3]$  
(i.e.~the elements of the corresponding projective space)
are given by $g([v])$, $g (p_3 [v])$, $g ([f_3])$, 
where~$[f_3]$ is fixed by the involution $p_3$ in the projective space.
Thus 
$d([u_1],[u_3]) =d([u_2],[u_3])$ in the projective space $P(V_1)$ (over $\C$ and $\mathbf H$, respectively). 
Hence the set $\mathcal S$ has codimension at least~$2$ in $V$, and 
$w$ cannot act as 
a reflection. This provides a contradiction in the case $l>2$. 
 
We deduce $l=2$.  Since the  
action of $G^0$ has trivial copolarity,  the action of $G^0$ and therefore 
of $N$  on $V=V_1 \oplus V_2$
has trivial principal isotropy groups. From the classification of cohomogeneity $1$ actions, we deduce that
$\hat K= \SU n$, with $n=2$ or $3$, or $\hat K= \SP 2$.
  
 If $\dim (G^0 /N ) \geq 3$ then $\dim (V) -\dim (G^0 ) \leq 3$ in all cases. Thus, due to \cite{str},  the action of $G^0$
cannot have trivial copolarity. Hence $\dim (G^0 / N) \leq 2$.  If $\dim (G^0 /N) =1$, then, using the fact that the
representations  $V_1$ and $V_2$ of $G^0$ are congruent 
inside $G$, we see that both representations are equivalent.
Hence there is only one $G^0$-isotypical component, in contradiction to our assumption. Therefore $G^0 /N$ is the 
two-dimensional torus $T^2$.  If $\hat K = \SU n$ 
we get $\dim (V) -\dim (G^0 ) \leq 3$ again, in contradiction to 
the trivial copolarity assumption.   
Thus $G^0$ must be finitely covered by $\SP 2\times\U1^2$.
Now $V_1=V_2=\Q^2$ and we use the canonical real basis $\{1,i,j,k\}$ 
of $\Q$. Up to orbit-equivalence, we may assume that 
each $\U1$-factor acts on the corresponding summand $\Q^2$ by right 
multiplication by matrices of the form
$e^{i\theta}\,\mathrm{id}$. Now $\sigma(v_1,v_2)=(-jv_1j,-jv_2j)$,
for $(v_1,v_2)\in V_1\oplus V_2$, is an involution that preserves
each $G^0$-orbit. It follows that the action of $G^0$
on $V$ has non-trivial copolarity in contradiction to our assumption.

{\bf Case 3.}
Now assume that the image is covered by a product $K$ 
of equal factors.  Then, $K$ is $\SP1\times\SP1$, 
$\dim V_i=4$ for all $i$, and $G^0$ is mapped onto $\SO4$ in this case. 
Thus $G^0$ is covered by a product $\SP1^m $ for some $m$.

Denote as above by $\Gamma$ the group 
$G/G^0$. This group permutes the $m$ factors of 
(the universal covering of)  $G^0$, hence is mapped onto a subgroup
$\Gamma '$ of the symmetric group $S_m$.  Note that on each 
$V_i$ exactly $2$ factors of $G^0$ act non-trivially.
Thus the isotypical components  can be indexed by a subset 
$A$ of the set of  unordered pairs of different  factors.
We will denote such unordered pairs by $i\otimes j$,
$1\leq i,j \leq m$. The action 
of $\Gamma$ on the set of $G^0$-components factors through 
$\Gamma'$.  Moreover, the action of $\Gamma'$  on $A$ is induced 
by the canonical action of $S_m$ on the set of unordered pairs.  
Due to \lref{isotypical},
the image of $\Gamma'$ is the whole permutation group
of~$A$.

The cardinality of $A$ is $\ell$. Since $\Gamma'$ acts as the 
full permutation group of $A$, we have~$\ell\leq m$. 
For any $i=1,\ldots,m$, we set $A_i = \{ j \;|\; i \otimes j \in A \}$ 
and denote by $a(i)$ the cardinality of $A_i$.
If $a(i)\geq 2$, for all $i$, then $A$ has at least $m$ elements, 
so $\ell=m$. It follows that $\Gamma'=S_m$ and 
hence $A$ has $m(m-1)/2=m$ elements. Hence $m=3$. Then $\dim (V)=12$ and 
$\dim (G^0 )=9$. Thus the action of $G^0$ on $V$ cannot have trivial copolarity by \cite{str}. 

Hence there exists some $i$ with $a(i)=1$.  If  $a(i)=1$, for all $i$,
then the action of $G^0$ is the direct product of the cohomogeneity $1$ representations of $G^0$ on $V_i$.
Thus the representation is polar in contradiction to our assumption. 

Thus, there are some $i$ with $a(i)=1$ and some $j$ with $a(j)>1$.
Note that the number $a(i)$ 
is constant on orbits of $\Gamma '$. 
 Since $\Gamma'$ acts transitively on $A$, 
for all pairs $i\otimes j \in A$, the unordered pair
$a(i) \otimes a(j)$ does not depend on $i\otimes j$. 
Hence  there are exactly two orbits $B$ and~$C$ of~$\Gamma'$
in $\{ 1,\ldots,m \}$, one of them, say~$B$, consisting of all~$i$ 
with $a(i)=1$, and the other one $C$ consisting of all
elements~$j$ with~$a(j) =a>1$.
 
Assume that $C$ has at least two elements $j_1$, $j_2$. Choose 
distinct elements
$i_{\pm} \in B$ such that $j_1 \otimes i_{\pm}  \in A$.
Then there is no element in $\Gamma '$ that leaves 
$j_1 \otimes i _+$ 
invariant and moves $j_1 \otimes i _-$
to some pair $j_2 \otimes i$. Contradiction to the fact that 
$\Gamma '$ acts as the full permutation group on $A$. 
  
Thus, $C$ has only one element that we may assume 
to be $\{ m \}$. Then $B$ has $m-1$ elements, and the action of the 
first  $(m-1)$ factors on $V$ is a (polar) product action.
One sees that the action of $G^0$ is in this case polar as well 
(and has non-trivial principal isotropy group).  
 
This finishes the proof in the case the image of $G^0$ in $V_1$ 
is covered by a product of equal factors.  

Taking all pieces together, we have completed the proof of \pref{mainproposition}.

\section{An example} \label{secexample} 
Before we go on to prove \tref{special}, we need to collect a few 
observations
about the geometry of the action of a maximal torus $T$ of $\SU{k+1}$
on $V=\C^{k+1}$.

 \begin{lem} \label{noboundarytorus}
The quotient $V/T$ has no boundary.
 \end{lem}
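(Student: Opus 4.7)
The plan is to enumerate the $T$-strata of $V/T$ and verify that none of them has codimension one. Since boundary points of $V/T$ are exactly the images of $T$-important points, i.e., points whose slice representation has cohomogeneity one, it suffices to show that no such point exists.

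First I would compute the isotropy $T_p$ at a point $p=(z_1,\dots,z_{k+1})\in V$. Writing the maximal torus as $T=\{(\zeta_1,\dots,\zeta_{k+1})\in\U1^{k+1}:\zeta_1\cdots\zeta_{k+1}=1\}$ and setting $I(p)=\{j:z_j=0\}$ with $m=|I(p)|$, an element $(\zeta_j)\in T$ fixes $p$ iff $\zeta_j=1$ for every $j\notin I(p)$. Hence $T_p\cong\{(\zeta_j)_{j\in I(p)}:\prod_{j\in I(p)}\zeta_j=1\}$, a torus of dimension $\max(m-1,0)$. In particular, points with $m\leq 1$ are $T$-regular; only points with $m\geq 2$ lie on non-regular strata.

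Next I would identify $\mathcal H_p^\dagger$ for $m\geq 2$. The fixed-point set of $T_p$ in $V$ is the coordinate subspace $F_p=\{w:w_j=0\text{ for }j\in I(p)\}$, and the $T$-orbit through $p$ lies inside $F_p$ because the torus action leaves zero coordinates zero. Consequently the tangent to the orbit is contained in $F_p$, and $\mathcal H_p^\dagger$ is precisely the orthogonal complement of $F_p$ in $V$, namely $\bigoplus_{j\in I(p)}\C e_j$, of real dimension $2m$. The induced $T_p$-action on this space is effective with trivial principal isotropy, since a generic point there has all coordinates nonzero.

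The cohomogeneity of the slice representation therefore equals $2m-\dim T_p=2m-(m-1)=m+1\geq 3$. By the description of strata recalled in subsection~\ref{stratific}, this is exactly the codimension of the stratum through $p$ in $V/T$. Hence every stratum of $V/T$ has codimension either $0$ or at least $3$, and in particular there is no codimension-one stratum, so $V/T$ has empty boundary. I do not anticipate any real obstacle; the only point requiring care is the verification that the orbit tangent lies in the $T_p$-fixed subspace, which identifies $\mathcal H_p^\dagger$ as the span of the vanishing coordinate directions, after which the dimension count is immediate.
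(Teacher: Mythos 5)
Your proof is correct and rests on the same computation as the paper's: the determinant-one condition on $T$ forces the isotropy at any non-regular point to be an $(m-1)$-torus acting with trivial principal isotropy on a $2m$-dimensional slice, so no stratum can have codimension one. The paper packages this via the dimension formula of \lref{boundreduced} (a codimension-one stratum would require a circle in $T$ whose fixed-point set is a complex hyperplane, which the relation $\zeta_1\cdots\zeta_{k+1}=1$ forbids), whereas your direct enumeration of strata reaches the same conclusion and in addition shows that every non-regular stratum has codimension $m+1\geq 3$.
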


\begin{proof}
No circle inside $T$ fixes a subset $F$ of complex codimension $1$. 
Since $T$ is commutative we derive the result from the dimension formula 
(Lemma~\ref{boundreduced}).
\end{proof}

In particular,  the action of any finite extension of $T$ has 
trivial abstract copolarity, due to \pref{boundaryexist}.

Note that the normalizer $N=N(T)$ of $T$ in $\mathbf O (2k+2)$  
normalizes the centralizer $\bar T$
of $T$ in $\mathbf O (2k+2)$. 
Assume $k\geq2$. Since $\bar T$ is a maximal 
torus of $\mathbf  U (k+1)$
which is also a maximal torus of $\mathbf O (2k+2)$, we see that 
$\bar T$ is the identity component of the normalizer $N$. Moreover, $N$ 
is generated by $\bar T$, the complex conjugation $c$ and the 
symmetric group $S_{k+1}$ of permutations of complex coordinates.
On the other hand, the case $k=1$ is discussed in~\cite[p.14]{str}.

\begin{lem} 
Assume $k\geq1$. Then the
action of the normalizer $N(T)=N$ of~$T$ in~$\mathbf O (2k+2)$ on 
the quotient $V/T$ induces an isomorphism
of $N/T$ with the isometry group $Iso(V/T)$  of $V/T$.
\end{lem}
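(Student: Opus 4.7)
The plan is to prove the map $N/T\to Iso(V/T)$ is an isomorphism by establishing injectivity and surjectivity separately. Injectivity is immediate: if $n\in N$ acts trivially on $V/T$, then $n$ preserves every $T$-orbit setwise. Since the principal isotropy of $T$ on $V$ is trivial, on each principal orbit $Tv$ the element $n$ is left multiplication by a uniquely determined $t(v)\in T$; by continuity $v\mapsto t(v)$ is locally constant, hence constant on the connected principal stratum. Therefore $n=t\in T$ on a dense subset, and so everywhere.

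For surjectivity, let $\phi\in Iso(V/T)$. The characters $d\phi_j$ of $T$ on $V$ form the vertices of a regular simplex spanning $\Lt^\ast$ by \pref{torus}, so $V$ has no nonzero $T$-fixed vector. By the ``fixed points and origins'' analysis of Section~\ref{secbasic}, the origin $0\in V/T$ is then intrinsically characterized as the unique point that does not occur as the midpoint of any nontrivial geodesic. Hence $\phi(0)=0$, so $\phi$ preserves the distance sphere $S(V)/T$, and the question reduces to producing a linear lift $\tilde\phi\in \mathbf O(V)$ of $\phi$.

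To construct the lift, I would proceed as follows. A vector in $\C^{k+1}$ has non-trivial $T$-isotropy precisely when at least two of its complex coordinates vanish, so the non-regular set in $V$ is a finite union of complex subspaces of complex codimension at least two; in particular, $V_{reg}$ is simply connected. The long exact homotopy sequence of the principal $T$-bundle $V_{reg}\to(V/T)_{reg}$ then yields $\pi_1((V/T)_{reg})=1$. Combined with the Riemannian submersion structure, this allows one to lift $\phi|_{(V/T)_{reg}}$ to a principal-bundle isomorphism $\tilde\phi:V_{reg}\to V_{reg}$ that preserves the horizontal distribution, so that $\tilde\phi$ is an isometry of the flat Riemannian manifold $V_{reg}$. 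Since $V\setminus V_{reg}$ has real codimension at least four, this $\tilde\phi$ extends uniquely by continuity to an isometry of the ambient Euclidean space $V$, i.e. to an element of $\mathbf O(V)$. Finally, $\tilde\phi$ maps $T$-orbits to $T$-orbits by construction, so conjugation by $\tilde\phi$ defines a Lie-group automorphism of $T$, whence $\tilde\phi\in N$ and represents $\phi$ in $N/T$.

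The technical heart, and therefore the main obstacle, is the lifting of $\phi|_{(V/T)_{reg}}$ to a bundle isomorphism. Over the simply connected base $(V/T)_{reg}$ principal $T$-bundles are classified by their Chern classes in $H^2((V/T)_{reg};\Z^k)$, so one needs $\phi^{\ast} V_{reg}\cong V_{reg}$. I expect this to follow from the rigidity of the weight configuration in \pref{torus}: any isometry of $V/T$ is forced to permute the characters $d\phi_j$ through the action of $S_{k+1}$, and this is precisely what makes the characteristic classes $\phi$-invariant. Once this cohomological comparison is in hand, the rest of the argument is routine.
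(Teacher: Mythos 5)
Your overall skeleton (reduce to isometries fixing the origin, observe via \lref{folk} that the distinguished points coming from the irreducible subspaces are permuted, then pin down the stabilizer of that configuration) matches the paper's, but the surjectivity argument has a genuine gap exactly where you flag ``the technical heart'': you never actually produce the lift, and even granting the cohomological comparison the remaining steps are not routine. Knowing that $\phi^*V_{reg}\cong V_{reg}$ as topological principal $T$-bundles is far from enough to obtain an \emph{isometric} lift $\tilde\phi\in\mathbf O(V)$: a bundle isomorphism covering $\phi$ is only defined up to gauge, and to make it an isometry of the flat metric you must additionally match (i) the fiber metrics, i.e.\ the norms $|X^*_v|$ of the action fields, which vary from orbit to orbit and are not a priori readable off the quotient metric, and (ii) the principal connection itself (not merely its cohomology class), since the horizontal distribution is part of the metric data. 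Showing that an isometry of $V/T$ preserves this extra structure is essentially the ``does the quotient determine the representation'' problem the whole paper is about. The paper sidesteps lifting entirely: after arranging that the isometry fixes the $k+1$ points $p_i\in S(V)/T$, it shows that the map $F=(d_{p_1},\dots,d_{p_{k+1}})$ is invariant under the stabilizer $N'$ of these points and that its regular fibers are exactly the $\bar T$-orbits (circles), because $F$ separates points of the rectangular spherical simplex $S(V)/\bar T$; an effective isometric action with circles as regular orbits forces $N'$ to be $\U1$ or $\OG2$, and $\OG2$ is already realized by $\bar T$ and complex conjugation. Note that $d(Tv,p_i)=\arccos|v_i|$ is precisely the quantity that would control your fiber metrics, so the paper's distance-function argument is the missing content, not a detail downstream of it.

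Two smaller points. In the injectivity step, ``by continuity $v\mapsto t(v)$ is locally constant'' is not a valid inference (a continuous map to $T$ need not be locally constant); the conclusion is correct but requires an argument, e.g.\ the paper's: if some $n\in N\setminus T$ acted trivially on $V/T$, the group generated by $T$ and $n$ would be orbit-equivalent to $T$ and have non-trivial principal isotropy, contradicting the trivial copolarity of $T$ (which follows from \lref{noboundarytorus} and \pref{boundaryexist}). Also, the description of $N$ as generated by $\bar T$, conjugation and $S_{k+1}$, and the counting of irreducible subspaces, are used only for $k\geq 2$; the paper treats $k=1$ separately by quoting \cite{str}.
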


\begin{proof}
For $k=1$, the result is contained in \cite[p.14]{str}.
Thus we will assume $k\geq 2$.

If an element of $N\setminus T$ acts trivially on $V/T$ we obtain an 
action by a group $T'$  larger than $T$ that is orbit equivalent to  the 
action of $T$. This would imply that the action of $T'$ had non-trivial 
principal isotropy group. Hence the action of $T'$ (and thus of $T$) had 
non-trivial copolarity. Contradiction. Thus the map from $N/T$ to 
$Iso (V/T)$ is injective.

To prove the surjectivity, take an isometry $I$ of $V/T$ to itself. 
Restrict it to the unit sphere
$X=S(V) /T$.   There are $k+1$ irreducible subspaces of $V$, 
each of them defines a unique point
$p_i$ in $X$, for $i=1,...,k+1$.  
Due to  \lref{folk}, these points are permuted by $I$.   
Composing $I$ with an element of $N$ that permutes the complex coordinates 
backwards, we may assume that $I$ fixes the points $p_i$.

Let $N'$ denote the subgroup of the isometry group of $V/T$ that fixes 
all the points $p_i$. The intersection of $N'$ with the image of $N$ is 
the group $\OG2$ generated by the image of $\bar T$ and the 
complex conjugation.  
We have to prove that $N' =\OG2$. In order to do so, consider the map
$F=(d_{p_1},d_{p_2},\ldots,d_{p_{k+1}}) :X \to \R ^{k+1}$ whose 
coordinates are distance functions to the points
$p_i$.  By construction, the function $F$ is $N'$-invariant.  
We claim that the fibers of $F$ are the orbits
of $\bar T$. Namely, due to the $N'$-invariance, the orbits of 
$\bar T$ are contained in the fibers of $F$.
On the other hand, $Y=X/\bar T = S(V)/\bar T$ is the rectangular  
spherical simplex with vertices given by the points $p_i$. And the 
function $F$  that descends to $Y$ separates the points of $Y$.

Thus the regular fibers of $F$ are circles. The group $N'$ acts 
effectively on $X$ having circles as regular orbits. 
This implies that $N'$ can be only $\U 1$ or $\OG 2$. 
Since its intersection with
$N$ is already $\mathbf O (2)$,  the whole group $N'$ must coincide with 
this intersection.
\end{proof}

 As a consequence we deduce:

\begin{cor} \label{conjugate}
 Let $T_1$, $T_2$ be finite extensions of $T$ in $\OG{2k+2}$. 
If $V/T_1$ and $V/T_2$ are isometric,
then $T_1$ and $T_2$ are conjugate inside  $N(T)$.
\end{cor}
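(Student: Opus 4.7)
The plan is to lift the given isometry $V/T_1\to V/T_2$ to an isometry of $V/T$, and then to apply the previous lemma. First, since $T$ is the identity component of each $T_i$, it is normalized by $T_i$, so $T_1,T_2\subset N(T)$. Write $\Gamma_i := T_i/T\subset N(T)/T$; then $V/T_i = (V/T)/\Gamma_i$.

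Next, I would invoke \pref{liftnoboundary} with $M_1=M_2=V$ (simply connected) and $G_i=T_i$. Its hypothesis holds because $V/T_1^0 = V/T$ has empty boundary by \lref{noboundarytorus}. Applying the proposition to the finite-index subgroup $T\subset T_2$, one obtains a subgroup of finite index $H\subset T_1$ together with an isometry $V/H\to V/T$ which is a lift of the prescribed isometry $V/T_1\to V/T_2$. Inspecting the proof of \pref{liftnoboundary}, the subgroup $H$ is produced so that $(V/H)_{orb}$ corresponds to the universal orbifold cover of $(V/T_2)_{orb}$ under the orbifold covering dictionary, which by \lref{simplycon} forces $H=T_1^0 = T$. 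Thus the lift is an isometry $\tilde\phi:V/T\to V/T$ which by construction intertwines the deck actions, so $\tilde\phi\,\Gamma_1\,\tilde\phi^{-1}=\Gamma_2$ inside the isometry group of $V/T$.

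Finally, the previous lemma identifies $\mathrm{Iso}(V/T)$ with $N(T)/T$, so there is $n\in N(T)$ with $\tilde\phi = nT$ as an isometry. The conjugation relation above then reads $n\Gamma_1 n^{-1}=\Gamma_2$ in $N(T)/T$, and since both $T_1,T_2$ contain $T$ this lifts to $nT_1 n^{-1}=T_2$ in $N(T)$, as desired. The only nontrivial step is the argument that the subgroup $H$ supplied by \pref{liftnoboundary} must actually equal $T$; everything else is bookkeeping once one has the isometry $\tilde\phi$ of $V/T$ and the identification of $\mathrm{Iso}(V/T)$ from the previous lemma.
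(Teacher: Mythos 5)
Your proof is correct and follows essentially the same route as the paper: lift the given isometry to the universal orbifold covering $V/T$ (the paper does this directly via \lref{simplycon} and the absence of boundary, rather than routing through \pref{liftnoboundary} and then arguing $H=T$, but the content is identical), realize the lift as an element $n$ of $N(T)$ via the preceding lemma, and match the deck groups. The only cosmetic difference is the final step, where the paper observes that $nT_1n^{-1}$ and $T_2$ have the same orbits and hence coincide by trivial copolarity, while you identify them directly as subgroups of $N(T)/T\cong\mathrm{Iso}(V/T)$; both are fine.
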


\begin{proof}
 Consider an isometry $J:V/T_1 \to V/T_2$. Since $V/T$ does not have 
boundary, $(V/T)_{orb}$ is the universal orbifold covering of 
$(V/T_i) _{orb}$ (Lemma~\ref{simplycon}). 
Then the isometry $J$ is induced by an 
isometry $I:V/T \to V/T$, and we can use the preceding lemma 
to lift $I$ to an element $n$ of $N(T)$.
Now $nT_1n^{-1}/T$ and $T_2/T$ have the same orbits in $V/T$
implying that $nT_1n^{-1}$ and $T_2$ have the same orbits
in $V$. Since they both act with trivial copolarity it follows
that they are equal.
\end{proof}

\begin{lem}  \label{finallemma}
Assume $k\geq 2$.
Let $T^+$ be a finite extension of $T$ such that $T^+/T$ acts 
on $V/T$ as a reflection group and such that
$T^+$ acts on $V$ irreducibly. Then there are two 
codimension~$1$ strata in~$V/T^+$ that meet at an
angle not equal to~$\pi /2$.
\end{lem}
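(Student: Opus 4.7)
The plan is to produce two transpositions in the image of $T^+/T$ in $S_{k+1}$ sharing an index, and to compute that the corresponding pair of reflection hypersurfaces in $V/T$ meet at angle $\pi/3\ne\pi/2$, which descends to a pair of distinct codimension-one strata of $V/T^+$ meeting at the same angle. The condition $k\geq 2$ enters through a simple graph-theoretic connectivity argument once irreducibility of $T^+$ is translated into transitivity of the $T^+/T$-action on the $T$-isotypical components of $V$.

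First I would classify the elements of $N(T)/T=(\bar T/T\rtimes S_{k+1})\rtimes\langle c\rangle$ acting as reflections on $V/T$. Parameterize $V/T$ by the $T$-invariants $(r_1,\ldots,r_{k+1},w)\in\R_{\geq0}^{k+1}\times\C$ subject to $|w|=\prod_\ell r_\ell$; then an element $(\sigma,\epsilon,\lambda)$ (with $\sigma\in S_{k+1}$, $\epsilon\in\{0,1\}$ encoding complex conjugation, and $\lambda=\prod_\ell t_\ell\in S^1$) acts as $(r,w)\mapsto(\sigma\cdot r,\,\lambda w^{(c^\epsilon)})$. A direct case analysis shows that the fixed set in $V/T$ has codimension one precisely in two families: (a) $\sigma=(ij)$ a transposition, $\epsilon=0$ and $\lambda=1$, with fixed hypersurface $\{r_i=r_j\}$; and (b) $\sigma=e$, $\epsilon=1$ and $\lambda$ arbitrary, with fixed hypersurface $\{2\arg w\equiv\arg\lambda\pmod{2\pi}\}$. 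The main technical point is ruling out subtle mixed involutions such as $t\cdot(ij)\cdot c$ with $t_i=t_j$, which are genuine involutions of $N(T)/T$ but impose simultaneous conditions on both the $r_\ell$ and $\arg w$, hence have codimension-two fixed sets.

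Next I would extract the transitive permutation action. Since $c$ preserves each complex line $\C e_\ell$, the $T^+/T$-action on the set of $T$-isotypical components factors through the natural projection $N(T)/T\to S_{k+1}$, and irreducibility of $T^+$ on $V$ is equivalent to transitivity of this image on $\{1,\ldots,k+1\}$. Since $T^+/T$ is generated by its reflections and type~(b) reflections project trivially to $S_{k+1}$, the image in $S_{k+1}$ is generated by the transpositions arising from type~(a) reflections in $T^+/T$. For $k+1\geq 3$, such a generating set of transpositions must form a connected graph on $\{1,\ldots,k+1\}$, and any connected graph on three or more vertices contains two edges sharing a vertex. Hence $T^+/T$ contains two type~(a) reflections whose fixed hypersurfaces in $V/T$ are $H_1=\{r_i=r_j\}$ and $H_2=\{r_j=r_l\}$ for some three distinct indices $i,j,l$.

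Finally, a short computation along the Riemannian submersion $V\to V/T$ pins down the dihedral angle. The Hermitian gradients $N_1=\nabla(|z_i|^2-|z_j|^2)=2(z_ie_i-z_je_j)$ and $N_2=\nabla(|z_j|^2-|z_l|^2)=2(z_je_j-z_le_l)$ are both orthogonal to every vertical vector $(i\theta_1z_1,\ldots,i\theta_{k+1}z_{k+1})$ with $\sum_\ell\theta_\ell=0$ (since the resulting Hermitian pairing is purely imaginary), so they are horizontal. At a point with $r_i=r_j=r_l=r$, their real inner product equals $-4r^2$ while each has squared norm $8r^2$, giving $\cos\theta=-\tfrac12$ and dihedral angle $\pi-2\pi/3=\pi/3$ between $H_1$ and $H_2$ in $V/T$. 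Locally near $H_1\cap H_2$ the quotient $V/T^+$ is isometric to the Coxeter chamber of the dihedral group generated by the two reflections, so the images of $H_1$ and $H_2$ give two distinct codimension-one strata of $V/T^+$ meeting at the angle $\pi/3\ne\pi/2$.
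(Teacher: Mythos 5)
Your proof is correct, but it takes a different route from the paper's primary argument. The paper's proof is soft: it invokes \lref{isotypical} to see that $T^+/T$ acts as the \emph{full} permutation group on the isotypical components, hence surjects onto the non-Abelian group $S_{k+1}$; since $T^+/T$ is the (finite) orbifold fundamental group of $(V/T^+)_{orb}$, the presentation recalled at the end of Subsection~\ref{fundgroups} shows that if all codimension-one strata met at right angles this group would have to be Abelian or infinite --- a contradiction. The explicit computation you carry out is only mentioned there as a parenthetical alternative (``two reflections corresponding to non-commuting transpositions define strata meeting at angle $\pi/3$''), and you have filled it in completely: you classify the reflections of $N(T)/T$ on $V/T$ via the invariants $(r_1,\dots,r_{k+1},w)$ into the transposition type and the conjugation type, observe that only the former contribute to the image in $S_{k+1}$, extract two transpositions sharing an index from transitivity by a spanning-tree argument, and compute the dihedral angle $\pi/3$ via horizontal normals in $V$. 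What your approach buys is self-containedness: you do not need \lref{isotypical} (whose hypotheses include trivial copolarity) nor the orbifold-fundamental-group machinery, only that $T^+/T$ is a reflection group acting irreducibly. What the paper's argument buys is brevity, given the machinery already in place. The only points you gloss over are routine: one should pick the point of $\{r_i=r_j=r_l\}$ generic (all coordinates nonzero and off every other reflection hypersurface) so that the local group is exactly the dihedral group generated by the reflections through it --- possibly including the third transposition $(il)$, which does not change the chamber angle $\pi/3$ --- and one should note that the conclusion needed downstream (in \cref{finalpoint}, via \lref{isotropy}) is precisely this local statement about the tangent cone, so it is irrelevant whether the two walls lie on the same global stratum of $V/T^+$.
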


\begin{proof}
We have seen in \lref{isotypical} that  $T^+/T$ acts as the 
full permutation group on the components of $V$.
Thus the finite reflection group $T^+ /T$, which is the 
orbifold fundamental group of $(V/T^+)_{orb}$
surjects onto the non-Abelian symmetric group $S_{k+1}$.   
However, if all strata of codimension $1$
would only meet at right angles, the orbifold fundamental group of 
$(V/T^+)_{orb}$ would be Abelian, cf. the end of
Subsection~\ref{fundgroups}. (More directly, it is not difficult to 
observe that two reflections $w_1$, $w_2$ corresponding to non-commuting 
transpositions in $S_{k+1}$ define strata of codimension $1$ that meet at 
an angle equal to $\pi /3$.)
\end{proof}

From this we deduce our final piece:
\begin{cor} \label{finalpoint}
Assume that $k\geq 1$.
Assume that a representation $\rho :H \to \mathbf O (W)$  is quotient 
equivalent to a finite extension $T_1$
of $T$ in   $\mathbf  O (2k+2)$. If the action of $H^0$ on $W$ is irreducible, then the principal isotropy group of $H$ is non-trivial.
\end{cor}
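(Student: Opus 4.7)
The plan is to argue by contradiction: assume the principal isotropy group of $H$ is trivial. Then the principal isotropy of $H^0$ is also trivial, since $(H^0)_p \subseteq H_p = \{e\}$ at any point $p\in W$ that is simultaneously $H$-regular and $H^0$-regular. Because $V/T$ has no boundary by \lref{noboundarytorus}, \pref{liftnoboundary} applied to the isometry $W/H \cong V/T_1$ with $G_2':=H^0$ (a subgroup of finite index in $H$) yields a finite-index subgroup $G_1'$ of $T_1$ with $V/G_1' \cong W/H^0$; in particular $G_1'$ is a finite extension of $T$ and $(G_1')^0 = T$. By \lref{folk}, the irreducibility of $H^0$ on $W$ transfers to irreducibility of $G_1'$ on $V$; and because $H^0$ is connected, the last sentence of \pref{onereflection} forces the finite group $G_1'/T$ to act on $V/T$ as a reflection group.

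Assume first $k \geq 2$. I then invoke \lref{finallemma} with $T^+:=G_1'$: it supplies two codimension-one strata in $V/G_1'$ meeting at an angle different from $\pi/2$. Transporting via the isometry $V/G_1' \cong W/H^0$ produces two codimension-one strata in $W/H^0$ meeting at a non-right angle. But $H^0$ is a connected compact group acting on the simply connected space $W$ with trivial principal isotropy, so \lref{isotropy} forces any two codimension-one strata of $W/H^0$ to meet at a right angle. This contradiction disposes of the case $k \geq 2$.

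For $k = 1$, \lref{finallemma} is unavailable, but one can run an analogous angle argument by hand. Here $V/T$ is the Euclidean cone over $S^2(\tfrac12)$, and the $T$-invariant real $2$-planes in $V=\C^2$ form an $S^2$-family $\{U_\alpha = \{(z, \alpha\bar z) : z \in \C\}\}_{\alpha \in \C \cup \{\infty\}}$ on which the finite reflection group $G_1'/T \subset \OG 3$ acts by isometries. Irreducibility of $G_1'$ on $V$ is equivalent to this $G_1'/T$-action on $S^2$ having no fixed point. A finite reflection subgroup of $\OG 3$ acting on $S^2$ fixed-point-freely must contain two non-commuting reflections, because any pair of commuting reflections in $\OG 3$ has mutually orthogonal fixed planes and therefore shares fixed axes on $S^2$. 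Two such non-commuting reflections produce in $V/G_1'$ two codimension-one strata meeting at an angle $\pi/m$ with $m \geq 3$, and the same contradiction with \lref{isotropy} finishes the argument.

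The main obstacle I expect is precisely the $k=1$ case, where the combinatorial backbone of \lref{finallemma} (the $S_{k+1}$-action on the $T$-isotypical components, which is non-abelian for $k\geq 2$) degenerates because there is only one $T$-isotypical component; the key move is to replace the permutation picture by the geometric $S^2$ of $T$-invariant $2$-planes, on which the reflection group $G_1'/T$ must act without fixed points, thereby reproducing the required non-right-angle pair of walls.
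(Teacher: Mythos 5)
For $k\geq 2$ your argument is correct and coincides with the paper's proof of this corollary: pass to a finite-index subgroup $G_1'$ of $T_1$ with $V/G_1'\cong W/H^0$ using \pref{liftnoboundary} and \lref{noboundarytorus}, transfer irreducibility via \lref{folk}, obtain the reflection-group property of $G_1'/T$ from the connectedness of $H^0$, and play \lref{finallemma} off against \lref{isotropy}.

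The case $k=1$, however, contains a genuine gap. Your key claim --- that a finite reflection subgroup of $\OG 3$ acting on $S^2$ without fixed points must contain two non-commuting reflections --- is false. The group $(\Z/2\Z)^3$ of diagonal matrices $\mathrm{diag}(\pm1,\pm1,\pm1)$ is generated by the three reflections in the coordinate planes; these commute pairwise because their walls are mutually orthogonal, yet the intersection of the three walls is the origin, so the group acts on $S^2$ with no fixed point. Your justification only excludes a common axis for a \emph{pair} of commuting reflections; three mutually orthogonal walls have no common axis. In the corresponding configuration $G_1'/T\cong(\Z/2\Z)^3$ the extension $G_1'$ is irreducible on $\C^2$, $G_1'/T$ acts on $V/T$ as a reflection group, and all codimension-one strata of $V/G_1'$ meet at right angles (the quotient is the cone over the all-right triangle in $S^2(\tfrac12)$), so no contradiction with \lref{isotropy} can be extracted. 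This is exactly why \lref{finallemma} is stated only for $k\geq2$, and the paper explicitly remarks that it fails for $k=1$. Note also that this exceptional case cannot be dismissed by a flatness/polarity argument, since the cone over $S^2(\tfrac12)$ is not flat away from the vertex. The paper settles $k=1$ by an entirely different route: $\rho$ then has cohomogeneity $3$, hence appears in group~III of Table~II of \cite{str}, and all representations listed there have non-trivial principal isotropy groups. As written, your $k=1$ argument does not close, and some substitute for the classification input (or at least a separate treatment of the $(\Z/2\Z)^3$ configuration) is required.
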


\begin{proof}
The representation $\rho$ is irreducible. For $k=1$, it has
cohomogeneity~$3$ and thus it is listed in group~III, Table~II 
in~\cite{str}. Those have non-trivial principal isotropy 
groups.

Assume $k\geq2$. 
Due to \pref{liftnoboundary} and \lref{noboundarytorus}, 
we find a subgroup $T^+$ of $T_1$ of finite index in $T_1$ 
such that the representation of $H^0$ on $W$ and of $T^+$ on $V$ 
are quotient equivalent. Since
$H^0$ is connected, we see that $T^+/T$ acts on $V/T$ 
as a reflection group (\lref{simplycon} and \lref{keyreflection}). 
Since $H^0$ acts irreducibly, so does $T^+$ (\lref{folk}). 
Applying the previous lemma
and \lref{isotropy}, we deduce that $H^0$ (and therefore $H$) 
acts with non-trivial principal isotropy group.
\end{proof}

\begin{rem}
We would like to mention that \lref{finallemma} does not hold for $k=1$,
see~\cite[Table~II]{str}.
\end{rem}

\section{Conclusion} \label{secconclusion} 

Now we are going to prove~\tref{special}.

Consider  the action of $H'$ on  $W'$ 
and take its reduction to a minimal generalized section~$V$.
Let $\tau :G \to \mathbf O(V)$ be this reduction. 
By construction it has trivial copolarity
and is in the same quotient equivalence class as 
$\rho :H\to \mathbf O (W)$.  Since $V$
is also a generalized section of $(H') ^0$, we see 
that the action of $G^0$ on $V$ is reducible. 

Due to  \pref{onereflection}, we find  subgroups of finite index  
$H^+$ in $H$ and  $G^+$ in $G$
such that the corresponding restricted representations are still 
quotient equivalent and such that
$G^+/G^0$ acts on $V/G^0$ as a reflection group.  
Since $H^+$ contains $H^0$ it acts irreducibly and so 
does~$G^+$, due to \lref{folk}.  
In particular, $G^+/G^0$ is a non-trivial group. 
Now, if $G$ is non-discrete, 
we are in the situation of \pref{mainproposition}.
We deduce that either the action of $G^0$ on $V$ is as required,
or that $G^0$ is one of the groups  $\U 2$ or $\U 1 \cdot \SP 2$ 
and $V$ is the double 
of the vector representation (on $\C ^2$ and $\mathbf H ^2$, respectively).

Assume that $G^0 = \U 2$. Then the action of $G$, hence that of~$H$ 
has cohomogeneity $4$.
We use the explicit classification of all irreducible representations 
of cohomogeneity $4$ of connected groups (\tref{mainclass}
that is proven independently in the second part of the paper).  
If $H^0$ is~$\SO 3$ or~$\U 2$
(the first two lines in the table), then $W/H^0$ has no boundary. 
Due to \pref{liftnoboundary},
we find a subgroup $H_1$ of finite index in $H$ such that $W/H_1$ 
and $V/G^0$ are isometric.
But $H_1$ acts irreducibly and $G^0$ 	
acts reducibly, in contradiction to \lref{folk}.
On the other hand, if $H^0$ has non-trivial copolarity, 
then it has copolarity $2$, according to \tref{mainclass}. 
Since the copolarities of $H$ and $H^0$ coincide, we find a reduction of $H$
to a representation of a group $K$ on $U$ with $K^0 =T^2$.  
The action of $K$ has trivial copolarity
and the representation of $K^0$ is reducible. Thus applying the 
arguments we have applied to 
$G$ (Propositions~\ref{onereflection} 
and~\ref{mainproposition}), 
we see that $K^0$ is the maximal torus of $\SU 3$ and $U=\C ^3$. 
Since $U/K^0$
has no boundary (\lref{noboundarytorus}) we 
use \pref{liftnoboundary}, to find a finite index subgroup $K_1$
of $K$ such that $U/K_1$ and $V/G^0$ are isometric.  But the action 
of $G^0$ on $V$ has infinitely many invariant subspaces 
(it is reducible with one isotypical component) 
and the action  of $K_1$ on $U$ only finitely many. 
This contradicts \lref{folk}.  
Hence the case $G^0 =\U 2$ cannot occur.

Assume now that $G^0 =\U 1 \cdot \SP 2$. 
Then the action of $G^0$ has cohomogeneity $5$, 
hence so does the action of $H$.  We use again the explicit 
classification of all irreducible  representations 
of cohomogeneity~$5$ of connected groups
(\tref{mainclass} that is proven independently in the second part 
of the paper). If $H^0$ is $\SU2$, then $W/H^0$ has no boundary
and we get a contradiction as above (Proposition~\ref{liftnoboundary}
and Lemma~\ref{folk}).
On the other hand, if the copolarity of $H^0$, hence of $H$ is $3$, then 
by the direct computations in the second part,
the reduction to the minimal generalized section has a torus $T^3$ as 
identity component.
Then arguing as in the case of cohomogeneity $4$ we obtain a contradiction. 
It remains to analyze the cases
$H^0 = \SO 3 \times \U 2$ and $H^0=\U 3 \times \SP 2$. Since  
the latter action has a reduction to the former one in terms
of a generalized section, 
we may assume that $H^0 = \SO 3 \times \U 2$.

To exclude this remaining case, we observe that $S(V)/G^0$ is a 
Riemannian orbifold. In fact it is easy to determine 
the equivalence classes of the slice representations at singular
points. There are only three possibilities, in which 
the nontrivial components of the slice representation
are respectively orbit equivalent to $(\U1,\C)\oplus(\SP1,\Q)$,
$(\U1,\C)$ and $(\SP1,\Q)$.
Since these are polar representations, the claim
follows from the main result of~\cite{LT}.

On the other hand, the 
slice representation of the identity component of the isotropy group of 
$H^0$ at a vector $v_1\otimes v_2$ is given by  
$T^2$ acting on $\C\oplus\C\oplus\C$ with weights $(1,0)$, $(1,1)$, $(1,-1)$,
hence it is non-polar. Thus $S(W)/H$ has non-orbifold points~\cite{LT}.

This proves that $G^0$ is the maximal torus of $\SU{k+1}$ and $V=\C^{k+1}$.

\begin{rem}
A more conceptual proof that the two special cases cannot 
occur can be obtained as follows. One shows that both quotient spaces
$S(V)/G^0$ are orbifolds of constant curvature, i.e., finite quotients of 
some round spheres. Hence the same is true for $S(V)/G=S(W)/H$.  One invokes a 
recent theorem of Wiesendorf \cite{Wiesen} saying that in this case 
all $H$-orbits are taut submanifolds of $W$.  Now one uses the main 
result of \cite{gt} that states that the action of $H$ on $W$ must be 
of cohomogeneity~$3$.
\end{rem}  

Assume now that there is another representation 
$\rho _1 :G_1 \to \mathbf O (V_1)$ in the same quotient 
equivalence class that has trivial copolarity.   
Due to \cref{finalpoint}, the representation of $G_1 ^0$
cannot be irreducible. Since it is reducible, we may apply 
the same arguments we have applied to $G$,
to deduce that the restriction of  $\rho _1$ to $G_1 ^0$  
is given by the action of a maximal torus
of $\SU{k'+1}$ on $\C ^{k'+1}$. Since the actions have the same 
cohomogeneity, $k=k'$ and 
the representation spaces 
of~$G_1^0$ and of~$G^0$ can be identified.  To prove that $\rho$ and $\rho _1$
are the same, we only need to invoke \cref{conjugate}.

\section{New setting} \label{secsetting}  
In
the next two sections we are going to prove \tref{verythird}  and  \cref{main}.
Thus let here and in the sequel $\rho:H\to \mathbf O (W)$ be a 
non-reduced non-polar 
irreducible representation of a connected compact Lie group. 
Let $\tau:G\to \mathbf O (V)$ be the minimal 
reduction in case of Theorem~\ref{verythird} or the reduction to a minimal
generalized section in case of Corollary~\ref{main}. 
If the action of $G^0$ is reducible, then we can apply \tref{special} 
to deduce that the copolarity and the abstract copolarity coincide and 
that the cohomogeneity is equal to $k+2$.
Thus \tref{verythird} and \cref{main} are
a consequence of the following result that will be 
proved in the next two sections.

\begin{prop} \label{insertprop} 
Assume that $\rho:H\to \mathbf O (W)$ and $\tau :G\to \mathbf O (V)$  
are quotient equivalent.
Assume that $H$ is connected, $\tau$  has trivial copolarity 
$k =\dim (G) \leq 6$ and that $\dim(H) >\dim (G)$.
Then the representation of $G^0$ is reducible.
\end{prop}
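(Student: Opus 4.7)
My strategy is to argue by contradiction: assume $G^0$ acts irreducibly on $V$. By \lref{folk}, $\tau$ is irreducible, and since $\tau$ has trivial copolarity, both $\tau$ and $\tau|_{G^0}$ have trivial principal isotropy group. Since $\rho$ is non-polar, $\tau$ cannot be polar either (else $\rho$ would be equivalent to a representation of a finite group), so $k = \dim G^0 \geq 1$. I then split the argument according to whether $G$ is connected.

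In the connected case ($G = G^0$), $\tau$ is itself an irreducible faithful representation of a connected compact Lie group of dimension at most~$6$ with trivial principal isotropy. I run through the short list of such $(G^0, V)$: tori $T^k$, $\SU 2$, $\SO 3$, $\U 2$, and small products like $\SO 3 \times T^j$, $\SU 2 \times T^j$, $\U 2 \times T^j$, $\SU 2 \times \SU 2$, $\SO 4$. For each candidate I analyze the boundary of $V/G^0$: if $V/G^0$ has no boundary, \pref{boundaryexist} shows $\tau$ is reduced and \pref{veryfirst} yields $\dim V \leq \dim W$, which combined with $\dim V = k + \dim(W/H)$ bounds the principal isotropy group of $\rho$ from below; if $V/G^0$ has boundary, \lref{boundreduced} forces the boundary isotropy groups to be spheres $S^1$ or $S^3$ sitting inside the low-dimensional group $G$. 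Both subcases reduce the admissible $(G^0, V)$ to a short list of cohomogeneity at most~$5$.

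In the disconnected case ($G \neq G^0$), \pref{onereflection} furnishes a finite-index subgroup $G^+ \subset G$ with $\Gamma := G^+/G^0$ acting on $V/G^0$ as a nontrivial Coxeter reflection group. Each generator is represented by a nice involution $w \in G^+$ (Subsection~\ref{newsubsec}) whose fixed subspace $F_w \subset V$ satisfies $\dim(G^0 \cdot F_w) = \dim V - 1$. Under the irreducibility assumption, $V$ has a unique $G^0$-isotypical component (namely $V$ itself), and the codimension-one condition imposes algebraic identities on $w$ relative to the normalizer and centralizer of $G^0$ in $\OG V$, of the same flavor as those exploited in the proof of \pref{mainproposition}. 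Combined with the enumeration above, this once again restricts the admissible $(G^0, V)$ to a short list of cohomogeneity at most~$5$. To close both cases I invoke \tref{mainclass}: a direct inspection of Tables~1 and~2 shows that none of the representations appearing on the resulting list can be realized as a reduction of an irreducible representation of a strictly larger connected group~$H$.

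The main obstacle is the case-by-case verification for the mixed semisimple cases $\SU 2 \times T^j$, $\SO 3 \times T^j$, $\U 2 \times T^j$, $\SU 2 \times \SU 2$, and $\SO 4$, where several irreducible representations survive the triviality-of-principal-isotropy test. The two complementary geometric tools—presence of boundary in $V/G^0$ and the Coxeter-reflection structure of $G/G^0$ acting on $V/G^0$—narrow the list, but \tref{mainclass} is indispensable for finishing the few cases that survive the purely geometric arguments.
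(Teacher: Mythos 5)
Your overall strategy coincides with the paper's: argue by contradiction, split on whether $G$ is connected, use important points and boundary in the connected case, nice involutions and a dimension formula in the disconnected case, and finish the survivors with Theorem~\ref{mainclass}. But as written the proposal has real gaps exactly where the work lies.

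First, the claim that the geometric tests ``reduce the admissible $(G^0,V)$ to a short list of cohomogeneity at most~$5$'' is precisely what has to be proved, and it is not a formality. In the connected case the paper first observes that $V/G=W/H$ must have boundary --- otherwise Proposition~\ref{boundaryexist} applied to the \emph{connected} group $H$ makes $\rho$ reduced, contradicting $\dim H>\dim G$; your route via ``bounding the principal isotropy group of $\rho$ from below'' does not visibly close this subcase. It then fixes an important point $p$, shows $G_p\cong\U1$ (a $\SU2$-isotropy would contain a central involution with no fixed points), and bounds $\dim V$ by estimating the fixed-point set of $G_p$ through the multiplicities of collinear weights. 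Without that weight-space estimate you are left with infinite families: for $G^0$ covered by $\SU2\times\SU2$ on $\R^{2m+1}\otimes_{\mathbf R}\R^{2n+1}$ the inequality $n(m-1)\leq 6$ only comes out of the weight analysis. Incidentally, Schur's lemma already trims your candidate list: a group whose center has dimension at least~$2$ (your $T^k$ with $k\geq2$, $\U2\times T^j$, etc.) cannot act both irreducibly and effectively, so only groups locally isomorphic to $\SP1$, $\U2$ or $\SP1\times\SP1$ occur.

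Second, in the disconnected case you propose identities ``of the same flavor as those exploited in the proof of Proposition~\ref{mainproposition}''. But that proposition and its engine, Lemma~\ref{keyreducible}, are built on a nice involution swapping two $G^0$-invariant subspaces with trivial intersection --- exactly what is unavailable when $G^0$ acts irreducibly. What the paper actually uses here is different: Lemma~\ref{spn} on involutions normalizing an irreducible $\SP n$, the parity of $\dim F$ for complex-linear versus antilinear involutions, and the inner/outer dichotomy for the automorphism induced by $w$ on $G^0$, all fed into $f=\dim V-\dim G+\dim C-1$. These computations, not the reflection-group formalism, cut the disconnected case down to the two survivors ($\SO3$ on $\R^7$ and a $\U2$-extension on $\C^4$) that Theorem~\ref{mainclass} then eliminates. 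So the plan follows the paper's route and identifies the right external input, but the decisive representation-theoretic steps are missing rather than merely deferred.
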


Form now on assume, to the contrary, that the action of $G^0$ is irreducible.
By assumption, the action of $G$ is effective, hence 
the Abelian summand of the Lie algebra of $G^0$ is at most one-dimensional.
Since $\dim (G) \leq 6$, the group $G^0$ is locally isomorphic to~$\SP 1$, 
or $\U 2$, or $\SP 1 \times \SP 1$, respectively.

We will distinguish two cases  depending on whether $G$ is connected or not. 
In the first case $V/G=V/G^0$ has 
boundary. We will exclude this case in the next section using a bit 
of representation theory.  If $G\neq G^0$ then
there is a nice involution 
$w\in G\setminus G^0$ 
(Proposition~\ref{onereflection}). Using the dimension 
formula for the set $F$ of its fixed points, we will
obtain a contradiction in  all but two cases that will be excluded by 
the general classification in the second part of the paper.

\section{Connected case} \label{connectedcase}
\subsection{Basics}
In addition to our assumption from the previous section, 
we assume here that $G$ is connected.  Then there is
some $G$-important point $p$ such that $G_p$ is either $\U 1$ or $\SU2$. 
In all cases ($G=G^0$ covered by $\SU 2$ or $\SU 2 \times\U1$
or $\SU 2 \times \SU 2$) any $\SU2$-subgroup contained in $G$ has a unique 
involution that is central 
in $G$. Since such a central involution cannot have fixed points 
(our representation is irreducible!), we cannot have $G_p\cong\SU2$.

We fix 
an important point $p$ whose stabilizer $G_p$ is some $\U 1$.  Let $F$ be the set of fixed points of $G_p$ and let $f$ denote its dimension.  Then  we have (\lref{boundreduced})  the dimension formula 
$f= \dim (V) - \dim (G) + \dim (N(G_p)) -2$.

\subsection{A note  on weight spaces}  
To obtain an upper bound for $f$ we make the following general  
observation that allows us to estimate $F$ using the weights of the 
representation. 

Let $\tau$ be a representation of a  
connected compact group $K$ on a complex vector space $U$. 
Let $L$ be a subgroup isomorphic to $\U1$. 
Choose a maximal torus $T$ containing $L$ . 
Let $F$ be the set of fixed points of $L$. 
Then $F$ is $T$-invariant, hence it is a sum of weight spaces. 
Note that all these weights vanish on
$L$, hence the weight spaces appearing in $F$ are associated
to weights contained in a hyperplane in the dual space 
of the Lie algebra $\mathfrak t$ of $T$. 

If $U$ is a real vector space, then one can consider its 
complexification $U^c=U\oplus iU$, and obtains that the set  
$F^c\subset U^c$ is a sum of weight spaces whose corresponding
weights are contained in a hyperplane of the dual space of $\Lt$.

If the rank of $K$ is $1$, then the only hyperplane in the 
dual of the Lie algebra of $T$ is $\{ 0 \}$,
hence $F$ (respectively $F^c$) is contained in the $0$-weight space.

Assume now that $K$ has rank $2$.   
Then all hyperplanes are one-dimensional.  
Thus all weight spaces appearing in $F$
must be linearly dependent.  
(In fact, they are all multiples of the weight given by
 $T\to T/L\cong\U 1$.)

\subsection{$G$ is covered by $\SU2$}
Here we assume that $G$ has rank $1$.   
Then all irreducible representations of $G$ are given by the 
quaternionic representations $\rho_{2n}$ on the complex space 
$\C^{2n}$ of homogeneous polynomials of degree $2n-1$ in two 
complex variables and by the real representations 
$\rho_{2n+1} $ on $\R^{2n+1}$.

From the previous subsection we see that 
$f$ is bounded from above by the real dimension of the 
$0$-weight space in the 
quaternionic case and by the complex dimension of the 
$0$-weight space in the real case.

In the quaternionic case, the $0$-weight space is trivial
(this already implies that there are no 
$G$-important points!)
and, in the real case, it is $1$-dimensional. 
Thus we get $\dim(V)=\dim(G)-\dim(N(G_p))+f+2\leq 3-1+1+2=5$.
If follows that the cohomogeneity of the action is at most $2$ and 
hence it is polar. Contradiction.

\subsection{$G$ is covered by $\U 1 \times \SU2$}
Then all irreducible representations of $G$ are complex. 
For each $n$ there is a exactly one irreducible representation
$\rho_n$ of $G$ on $\C^n$.  
The restriction of each weight to the central 
$\U 1$ of $G$ is independent of the 
weight, thus there are no linearly dependent non-equal weights.   
One knows that each weight space is complex $1$-dimensional.  
Thus we deduce $f\leq 2$. Since $\dim (N(G_p)) \geq  \dim (T) \geq 2$, 
we get from the dimension
formula $\dim (V) \leq 6$. Again the cohomogeneity 
of the action is at most $2$ 
and hence the representation must be polar.
Contradiction. 

\subsection{$G$ is covered by $\SU2 \times \SU2$}
In this case the complex irreducible representations of $G$ 
are given by $\rho_m \otimes \rho_n$ on
 $\C ^m \otimes_{\mathbf C} \C^n$.  
Using that all weights have multiplicities~$1$  
for $\rho _n$ and $\rho _m$, we see
that any number of linearly dependent weights of $\rho _m \otimes \rho _n$ 
has at most $\min \{m,n \}$ elements.
Without loss of generality, we assume $m\geq n$. 

If $m$, $n$ have different parity, then the 
representation $V$ is quaternionic and 
we get $\dim (V) =2mn$, $\dim (F) \leq 2n$.  
On the other hand we get from the dimension formula:
$2mn\leq 6 + 2n$.  Hence $n(m-1) \leq 3$, which is  
impossible as $m\geq n$.

If $m$, $n$ have the same parity, then the representation $V$ is real,  
and the dimension formula gives us $mn\leq 6 +n$.  
Hence, $n(m-1) \leq 6$. Thus either $m=n=3$ and the 
corresponding representation of $\SO3\times\SO3$
on $\R ^3 \otimes \R ^3$ is polar, or 
$m=n=2$ or $m=4$, $n=2$ which have cohomogeneity at most~$2$
and hence are also polar.   

\section{Disconnected case} \label{discconnectedcase}
\subsection{General useful observations} \label{way}
Here we are going to work with a nice involution $w\in G \setminus G^0$.
This involution normalizes $G^0$. We denote the set of the  fixed points of $w$  by $F$ and the  dimension of $F$  by $f$.
We have the dimension formula:
 $f=\dim (V) -\dim (G) + \dim(C) -1$.  Here $C$ is the centralizer of $w$ in $G$.

We will often use the following observation:

\begin{lem} \label{spn} 
Let $K\subset \mathbf{O}(U)$ be a closed subgroup such that 
$K^0$ is locally isomorphic to
$\SP n$. Assume that $K^0$ acts irreducibly
on $U$ and there is an involution $w\in K\setminus K^0$.
Then either $K^0=\SP n/\mathbf Z_2$ and $w=-w'$, for some involution 
$w'\in K^0$, or $K^0=\SP n$ and, for some complex structure on $U$,
the involution $w$ and the group $K^0$ are contained in the unitary
group of $U$. 
\end{lem}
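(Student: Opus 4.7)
The plan is to combine the fact that $\mathfrak{sp}(n)$ has no outer automorphisms with Schur's lemma over $\R$. Conjugation by $w$ is an automorphism of $K^0$, hence of $\mathfrak{sp}(n)$; since the Dynkin diagram of $\mathfrak{sp}(n)$ ($A_1$ for $n=1$, $C_n$ for $n\geq 2$) has no nontrivial symmetries, this automorphism is inner. So there exists $g\in K^0$ with $wkw^{-1}=gkg^{-1}$ for all $k\in K^0$, and $q:=g^{-1}w$ lies in the centralizer $Z:=Z_{\OG U}(K^0)$. Since every complex irreducible representation of $\SP n$ is self-dual, the irreducible real $K^0$-module $U$ is of real or quaternionic type, and by Schur the centralizer $Z$ is $\{\pm \mathrm{id}_U\}$ or (a copy of) $\SP 1$ acting by unit quaternions, respectively. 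In the real-type case $-1\in\SP n$ acts trivially on $U$, so $K^0=\SP n/\mathbf Z_2$; in the quaternionic-type case $-1\in\SP n$ acts as $-\mathrm{id}_U$, so the covering $\SP n\to K^0$ is faithful and $K^0=\SP n$.

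In the real-type case $q=g^{-1}w\in\{\pm 1\}$, and the sign $+$ would give $w=g\in K^0$, contrary to assumption; so $w=-g$. Setting $w':=g$ gives $w=-w'$ with $w'\in K^0$, and $w^2=1$ forces $(w')^2=1$, which is the first alternative. In the quaternionic-type case $q\in\SP 1\subset Z$ commutes with $g\in K^0$, so $w^2=(gq)^2=g^2q^2=1$; hence $g^2=q^{-2}$ lies in $K^0\cap Z=Z(K^0)=\{\pm\mathrm{id}_U\}$, and therefore $q^2=\pm 1$. The alternative $q^2=1$ gives $q=\pm 1$ and then $w=\pm g\in K^0$, which is excluded; so $q^2=-1$, and $J:=q$ is an orthogonal complex structure on $U$. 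Now $K^0$ is $J$-linear because $J\in Z$, and a direct computation using $qg=gq$ yields $wJ=gq^2=Jw$, so $w$ is $J$-linear too. Thus $K^0$ and $w$ both lie in the unitary group of $(U,J)$, which is the second alternative.

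The main obstacle is keeping careful track of the various centers: the abstract center $Z(\SP n)=\{\pm 1\}$, its image inside $\OG U$, the Schur centralizer $Z_{\OG U}(K^0)$, and the key identity $K^0\cap Z_{\OG U}(K^0)=Z(K^0)$. One also needs to correctly match the type of the irreducible representation with the two alternatives in the conclusion, and to verify that $-\mathrm{id}_U\in K^0$ precisely in the quaternionic case. Once these identifications are pinned down, the remaining verifications are direct commutator calculations.
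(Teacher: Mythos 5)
Your proof is correct and follows essentially the same route as the paper's: conjugation by $w$ is an inner automorphism of $K^0$, so $w=gq$ with $q$ centralizing $K^0$, whence $q^2$ lies in the center of $K^0$ and the two alternatives correspond to $q=-1$ versus $q^2=-1$ defining a complex structure. The only cosmetic difference is that you organize the case split by the Frobenius--Schur type of $U$ (using the classical fact that for $\SP n$ the type is detected by the action of the central element $-1$), whereas the paper splits directly on whether $K^0$ is $\SP n$ or $\SP n/\mathbf Z_2$; these case divisions coincide.
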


\begin{proof}
The involution $w$ acts by conjugation as an automorphism on $K^0$. 
Since $K^0$ does not have outer automorphisms, we find some $j \in K^0$ 
such that conjugation with $j$ induces the same automorphism as 
conjugation with $w$. Then the element $q= j ^{-1} w$ commutes with $K^0$.

Since $w$ is an involution, we have $(jq)^2= j^2 q^2 =1$. 
Thus~$q^2 \in K^0$. Since~$q$ commutes with~$K^0$, 
the element~$q^2$ must be in the center of~$K^0$.  
If $K^0 = \SP n / \Z _2$ then~$q^2$ must be the identity, i.e., $q$ 
is an involution. Since the representation of~$K^0$ is irreducible
and $w$ is not in $K^0$, we must have $q=-1$, hence $w=-j$.
 
If $K^0 = \SP n$, then the element $-1$ is contained in $K^0$ 
(it is the only non-trivial involution that can commute with an 
irreducible representation). Hence, the same reasoning as before 
shows that $q^2 = 1$ would imply that $w$ is contained in $K^0$. 
Thus we must have $q^2 =-1$. Then $j$ and $w$ commute with $q$, 
and we finish the proof by taking the complex structure defined by $q$.
\end{proof}
 
\subsection{$G^0$ is covered by $\SU2$}
Assume that $G^0$ is covered by $\SP 1 =\SU 2$.
If $G^0 =\SU 2$ then dimension of $V$ is even and, due to 
\lref{spn}, the involution $w$ must preserve a complex structure.  
Hence its set $F$ of fixed points has even dimension.
The dimension formula $f= \dim(V) - 3+1-1$ provides a contradiction. 
 
Assume now that $G^0= \SO 3$. Then $V=\mathbf R^{2n+1}$ for some $n$.
The dimension formula gives us~$f=2n+1 - 3 +1 -1 =2n-2$.
On the other hand, due to \lref{spn}, the involution $w$ is given by $-w'$ 
for some involution $w' \in \SO3$.
However, the (up to conjugation unique) involution $w'$ in $\SO 3$ 
fixes a subspace of dimension $n$, if $n$ is odd,
and of dimension $n+1$, if $n$ is even.
Thus $f= 2n+1 - n=n+1$ or $f=2n+1 -(n+1)=n$, respectively.
   
By inserting into the previous equation, we deduce that 
$n=3$ or $n=2$. The case $n=2$ is polar (thus excluded) 
and we are left with the case $n=3$.
  
Summarizing, we have shown that if $G^0$ is locally isomorphic to $\SP 1$, 
we must have $G^0 = \SO 3$ and $V= \R^7$.  
However, in this case the action of $G$ on $V$ has cohomogeneity $4$. 
The explicit classification given by \tref{mainclass} shows that 
this case cannot occur as a non-trivial reduction, since
there are no representations of copolarity $3$ listed in 
Table~1.

\subsection{$G^0$ is covered by $\U1\times\SU2$}
The representation space $V$ is the complex space $\C ^n$.
The center of  $G^0$  is a circle $\U 1$ that acts on $\C ^n$ 
as complex multiplication. 
Our nice involution $w$ normalizes  this circle. 
Thus $w$ is either complex linear or complex antilinear.
We have that $f$ is even in the first case and 
$f=n$ in the second case. 

In the complex linear case,  the centralizer 
of~$w$ has dimension~$2$ and the dimension formula
yields
$f=2n-4+2-1=2n-3$. This contradicts the fact that $f$ is even.

In the complex antilinear case, the centralizer 
of~$w$ has dimension~$3$, if~$w$ commutes with 
$\SU2$, and dimension~$1$, if it does not commute. 
In the first case we deduce 
$f=2n-4+3-1=2n-2$ and in the second case $f=2n-4+1-1=2n-4$. 
Using that $f=n$ we derive $n=2$ in the 
first case and $n=4$ in the second case.

The case $n=2$ is polar.
Thus we are left with the case $n=4$.
However, in this case the quotient 
$V/G^0$ has again cohomogeneity $4$. Thus this case is excluded by the 
classification result \tref{mainclass}.

\subsection{ $G^0$ is covered by  $\SU2 \times \SU2$}
There are three cases to be analyzed:
\begin{enumerate}
\item[(I)] $\SU2\cdot\SU2$ acting on $\Q^m\otimes_{\mathbf H}\Q^n$;
\item[(II)] $\SO3\times\SO3$ acting on 
$\R^{2m+1}\otimes_{\mathbf R}\R^{2n+1}$; and
\item[(III)] $\SO3\times\SU2$ acting on $\R^{2m+1}\otimes_{\mathbf R}\Q^n$.
\end{enumerate}

For our nice involution~$w$ 
we have the formula $f=\dim (V) -7 + \dim (C)$.
The conjugation by $w$ can act on $G^0$ as an inner or as an 
outer automorphism. 
We deal with these two cases separately. 

{\bf Outer automorphism.}
We assume first that $w$ acts on $G^0$ not as an inner automorphism. Then $w$ must interchange both factors of $G^0$. Thus we must be in cases~(I) or~(II). 
Moreover, the equality of dimensions  $m=n$ must hold true.

By conjugating~$w$ with an element in $G^0$ we may assume that 
$w$ acts on $G^0$ by interchanging the factors, 
hence by  sending $(g_1,g_2)$ to $(g_2,g_1)$.

Consider the involution $i:V\to V$ defined by interchanging the equal 
factors of $V$: $i(h_1 \otimes  h_2 )= h_2 \otimes h_1$.  
Then $i$ normalizes $G^0$ and the conjugation with $i$
acts on $G^0$ by interchanging the factors of $G^0$. 
Therefore, $w\circ i$ commutes with $G^0$.
Since the representation of $G^0$ is of real type, we must have $w= \pm i$.
Moreover, in both cases, the centralizer of $w$ is the diagonal of $G^0$ that 
has dimension~$3$.

From the dimension formula we conclude that $\dim(V)-f=4$.
In case (II), the dimensions of the sets of fixed points of $i$ and $-i$ 
are given by $(2n+1)(2n+2)/2$ and $(2n+1)(2n)/2$, 
respectively. Since both numbers are larger than $4$ for $n\geq 2$
($n=1$ is polar),  
we get $\dim(V)-f > 4$ and a contradiction.

In case (I),  the dimensions of the sets of fixed points of $i$ and $-i$ are
$n(2n-1)$ and $n(2n+1)$, respectively (note that viewing 
$\Q^n\otimes_{\mathbf H}\Q^{n}$ as the real vector space of quaternionic 
matrices of order $n$, the involution $i$ corresponds to 
transpose conjugation $X\mapsto X^*$). 
Again, for $n>1$ ($n=1$ is polar), 
both numbers are larger than $4$.
Hence $\dim(V)-f$  is larger than $4$ and we derive a contradiction.

{\bf Inner automorphism.}
We assume now that $w$ acts on $G^0$ by an inner automorphism. Then 
$w=q j $, for some $q$ that centralizes $G^0$ and some $j\in G^0$. 

In cases~(I) and~(II) the representations are of real type, 
hence the centralizer of $G^0$ consists of $\pm 1$.   
Since $w$ is not in $G^0$, $q$ must lie outside $G^0$.
In case (I), the element  $-1$ is contained in $G^0$, 
thus we get a contradiction directly.

In case~(II) we must have $q= -1$.  
Thus $w=-j$ and $j$ must be an involution in $G^0$ that is given by a product 
$j=j_1 \cdot j_2$, where each $j_i$ is either an involution or the 
identity in the corresponding factor of $G^0$. 

By the dimension formula, 
the set of fixed points of $j$ must have dimension equal to 
$7-c$, where $c=\dim(C)$.
Denoting by $f_i$ the set of fixed points of $-j_i$  and by 
$e_i$ the set of fixed points 
of $j_i$, we get $7-c=f_1 \cdot f_2 + e_1 \cdot e_2$.

If $j_1$ is not the identity, then $e_1=m$ if $m$ is odd and $m+1$ if $m$ is even. And the corresponding statement is true for $j_2$.

If $j_1=1$  then $c=4$ and we have
$3=(2m+1)e_2$. Hence $e_2=1$ and $m=n=1$ and our representation is polar,
in contradiction to our assumption. Similarly, $j_2=1$ is impossible.

If $j_1$ and $j_2$ are both different from the identity, 
then $c=2$ and in all cases we get $5 > 2mn$.  For $m=n=1$, we have a 
polar representation and, for $m=1$, $n=2$, the dimension formula reads as  $5=1\cdot 3 + 2\cdot 2$,
thus we derive a contradiction.

We are left with the case~(III).
In this case $q^2=j^{-2}$ is contained in the center of
$G^0$. Thus either $q^2 =1$ or $q^2=-1$. If $q^2 =1$ 
then (since $q$ commutes with $G^0$), it must be that
$q=\pm 1$.  But $-1$ is contained in $G^0$, 
hence we would get    $q \in G^0$, which is impossible.

Hence, we must have $q^2 =-1$. Therefore $w$ is a complex involution 
with respect to the complex structure defined by $q$. 
Therefore, $\dim(V)-f$ is an even number. 
However, $c$ is either equal to $2$ or to $4$, hence $7-c$ is odd.  
This provides a contradiction.

\bigskip

\begin{center}
\large\bf
Part 2. Irreducible representations of cohomogeneity $4$ or $5$

\end{center}

\medskip

In this part, we classify the non-polar irreducible representations
of cohomogeneity $4$ or $5$ and prove Theorem~\ref{mainclass}.
Throughout, we use the lists of 
isotropy representations of symmetric spaces~\cite{wolf},
additional polar representations~\cite{eh}, and 
representations of cohomogeneity at most $3$~\cite{hl} 
(see also~\cite{uch,yas,str2}).

\section{Preliminaries}

Let $\rho=(G,V)$ be a real irreducible representation 
of a compact connected Lie group $G$ on a real vector space $V$.
Denote by $c(\rho)$ the cohomogeneity of $\rho$.

\subsection{Basic dimension bound}
It is obvious that 
\[ \dim V \leq \dim G + c(\rho). \]

\subsection{Types}
Recall that, by Schur's lemma,  
the centralizer of $\rho(G)$ in $\mathrm{End}(V)$ is an 
associative real
division algebra, thus, by Frobenius' theorem,
isomorphic to one of $\R$, $\C$ or $\Q$;
accordingly, we say that $\rho$ is of real, complex
of quaternionic type.
There are many alternative characterizations
of such types; the following one is often useful.
$\rho$ is of real type if and only if 
its complexification $\rho^c$ remains irreducible; in this case,
$\rho$ is a real form of a complex irreducible representation.
Otherwise, $\rho^c=\pi\oplus\pi^*$, where $\pi$ is  
complex irreducible and $\rho$ is equivalent to the realification $\pi^r$;
here $\rho$ is of quaternionic (resp. complex) type if $\pi$ and $\pi^*$ are 
equivalent (resp. not equivalent), where $\pi^*$ denotes the dual 
representation. 

\subsection{Quaternionic matrices}
For practical computations, it is often useful to work
with quaternionic matrices. We view a quaternionic vector space $V$
as a right $\Q$-module $V_{\mathbf H}$. 
The set $\mathrm{Hom}(V_{\mathbf H},W_{\mathbf H})$ of 
$\Q$-linear maps $V_{\mathbf H}\to W_{\mathbf H}$ 
is a real vector space; in the special case
$V^*=\mathrm{Hom}(V_{\mathbf H},\Q_{\mathbf H})$, since $\Q$ is a 
bimodule, we get a left $\Q$-module
structure ${}_{\mathbf H}V^*$. 
Now $W_{\mathbf H}\otimes{}_{\mathbf H}V^*$ is well defined and a real vector
space, and there is the usual canonical isomorphism 
$W_{\mathbf H}\otimes{}_{\mathbf H}V^*\cong\mathrm{Hom}(V_{\mathbf H},
W_{\mathbf H})$; this is $G$-equivariant in case $V$, $W$ are 
$G$-representations of quaternionic type. It is also clear that 
$W_{\mathbf H}\otimes{}_{\mathbf H}V^*$ is a real form of 
$W\otimes_{\mathbf C}V^*$. Having said this, henceforth
we write just $W\otimes_{\mathbf H}V$ for 
$W_{\mathbf H}\otimes{}_{\mathbf H}V^*$.

\subsection{Slices and sums}\label{sec:slices-sums}
The slice representation at $v\in V$ is the induced representation
of the isotropy group $G_v$ on the normal space $N_v(Gv)$. 
It is known that the cohomogeneity of a slice representation is equal 
to the cohomogeneity of the original representation. This works as 
an inductive argument allowing one to compute precisely
the cohomogeneity (compare~\cite{hh}). 
One can also apply this remark to sums. 
Let $\rho=\rho_1\oplus\rho_2$ be the 
representation $(G,V_1\oplus V_2)$. Let $v_1\in V_1$ be a regular point
for $(G,V_1)$. Then consideration of the slice of $\rho$ at $v_1$ yields 
\[ c(\rho)= c(\rho_1) + c(G_{v_1},V_2). \]
In particular, $c(\rho)\geq c(\rho_1)+c(\rho_2)$ and equality
holds if and only if $(G_{v_1},V_2)$ is orbit equivalent to 
$\rho_2=(G,V_2)$.

\subsection{Tensor products}\label{sec:subgroups-products}

It is convenient to introduce the following notation.
If $A$ and $B$ are classical groups, we shall denote by $A\otimes B$ 
the irreducible representation given by the tensor product of the 
vector representations (the field over which the tensor 
product is being taking is determined by the 
types of the factor-representations).

In general, for a tensor product 
$\rho=(G=G_1\times G_2,V_1\otimes_{\mathbf F}V_2)$, where 
$\dim V_i=n_i$ and
$\mathbf F=\R$, $\C$ or $\Q$, we respectively have
\[\begin{array}{l}
\rho(G)\subset\SO{n_1}\otimes\SO{n_2};\ \mbox{or}\\
\rho(G)\subset\U{n_1}\otimes\U{n_2};\ \mbox{or}\\
\rho(G)\subset\SP{n_1}\otimes\SP{n_2}.\\
\end{array}\]
It follows that 
\begin{equation*}
c(\rho)\geq\min\{n_1,n_2\}.
\end{equation*}

The following monotonicity lemma will be used to simplify the
estimation of the cohomogeneity of some representations. 
\begin{lem}\label{monoton}
Let $\rho_1=(G_1,V_1)$ be an arbitrary real representation, 
and let $\rho_2(n)=(G_2(n),\mathbf F^n)$
be the standard representation of $\OG n$, $\U n$, $\SP n$ on
$\R^n$, $\C^n$, $\Q^n$, respectively. Assume $\rho_1$ is of
$\tilde{\mathbf F}$-type for $\tilde{\mathbf F}\subset\mathbf F$ and 
consider the tensor product $\rho(n)=(G(n)=G_1\times G_2(n),
V_1\otimes_{\tilde{\mathbf F}}\mathbf F^n)$. Then 
$c(\rho(n))\leq c(\rho(n+1))$. 
\end{lem}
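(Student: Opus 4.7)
My plan is to realize $\rho(n)$ geometrically inside $\rho(n+1)$ by embedding $V_n \hookrightarrow V_{n+1}$ through the inclusion $\mathbf F^n \hookrightarrow \mathbf F^{n+1}$ of the first $n$ coordinates, and then compute the slice representation of $\rho(n+1)$ at a carefully chosen point. Write $G_2(n) \subset G_2(n+1)$ for the stabilizer of $e_{n+1}$ and $G_2(1) \subset G_2(n+1)$ for the subgroup acting on $\mathbf F\cdot e_{n+1}$ while fixing $\mathbf F^n$; the decomposition $V_{n+1} = V_n \oplus W_0$ with $W_0 := V_1 \otimes_{\tilde{\mathbf F}} \mathbf F\cdot e_{n+1}$ is then invariant under the subgroup $G(n) \times G_2(1) \subset G(n+1)$.

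Pick a principal point $v \in V_n$ for $\rho(n)$. Decomposing $\mathfrak g_2(n+1) = \mathfrak g_2(n) \oplus \mathfrak g_2(1) \oplus \mathfrak m$ orthogonally, one checks that $\mathfrak m$ sends $\mathbf F^n$ into $\mathbf F\cdot e_{n+1}$ and vice versa, while $\mathfrak g_2(1)$ annihilates $v$ (since $v$ has no $e_{n+1}$-component); thus
\[
T_v(G(n+1)\cdot v) \;=\; T_v(G(n)\cdot v)\oplus \mathfrak m\cdot v \;\subset\; V_n \oplus W_0,
\]
and taking orthogonal complements yields $N_v(G(n+1)\cdot v) = N^{(n)}_v \oplus W$, where $N^{(n)}_v \subset V_n$ is the slice of $\rho(n)$ at $v$ and $W := W_0 \ominus \mathfrak m\cdot v$. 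I will split into two regimes. In the regime $n \leq d := \dim_{\tilde{\mathbf F}}V_1$, choose $v$ so that its components $v_1,\ldots,v_n \in V_1$ are $\tilde{\mathbf F}$-linearly independent; writing $(g_1,g_2)\cdot v = v$ componentwise and using this independence forces $(g_2)_{n+1,i}=0$ for $i\leq n$ (and symmetrically $(g_2)_{i,n+1}=0$), so $G(n+1)_v = G(n)_v \times G_2(1)$, which preserves the decomposition of $N_v$. Since $v$ is principal for $\rho(n)$, the action on $N^{(n)}_v$ is trivial with cohomogeneity $\dim N^{(n)}_v = c(\rho(n))$; the sum inequality of Section~\ref{sec:slices-sums} and invariance of cohomogeneity under slicing then give
\[
c(\rho(n+1)) \;=\; c(G(n+1)_v, N_v) \;\geq\; c(\rho(n)) + c(G(n+1)_v, W) \;\geq\; c(\rho(n)).
\]

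In the complementary regime $n \geq d$, the above isotropy identification fails because the $v_i$ cannot all be linearly independent, so I would fall back on a normal-form argument. Any $v \in V_{n+1}$ viewed as an $\mathbf F$-linear map $\mathbf F^{(n+1)*} \to V_1\otimes_{\tilde{\mathbf F}}\mathbf F$ has $\mathbf F$-kernel of dimension at least $n+1-d\geq 1$; rotating an $\mathbf F$-line of the kernel onto $\mathbf F\cdot e_{n+1}^*$ by $G_2(n+1)$ moves $v$ into $V_n$, and further coordinate permutations by $G_2(n)$ bring $v$ to the canonical form $v=(v_1,\ldots,v_d,0,\ldots,0)$ with $v_1,\ldots,v_d$ spanning $V_1$. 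A partial-isometry argument---every unitary of the $d$-dimensional support extends to an element of $G_2(n+1)$ fixing an orthogonal complement---shows that the residual equivalence on canonical forms is exactly the action of $G(d)=G_1\times G_2(d)$, so $V_{n+1}/G(n+1) \cong V_d/G(d)$ and hence $c(\rho(n+1)) = c(\rho(d))$; the same argument applied to $\rho(n)$ yields $c(\rho(n))=c(\rho(d))$, so the two cohomogeneities in fact coincide in this regime.

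The main technical point is the isotropy identification $G(n+1)_v = G(n)_v \times G_2(1)$ in the first regime, which requires an explicit linear-algebra computation against the $\tilde{\mathbf F}$-linear independence of the $v_i$ and which is the reason one cannot treat all $n$ by a single slice argument; once established, the sum inequality and the normal-form computation together deliver $c(\rho(n)) \leq c(\rho(n+1))$ for every $n\geq 1$.
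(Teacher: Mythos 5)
Your proof is correct, but it takes a partly different and considerably longer route than the paper's. The paper proves the lemma in one stroke by showing that the standard embedding $\mathcal L_n\to\mathcal L_{n+1}$, where $\mathcal L_n=\mathrm{Hom}_{\tilde{\mathbf F}}(V_1,\mathbf F^n)$, induces an \emph{injection of orbit spaces} $V_n/G(n)\hookrightarrow V_{n+1}/G(n+1)$: if $g_2\in G_2(n+1)$ carries $A$ to $B$ with $A,B\in\mathcal L_n$, one restricts $g_2$ to the $\mathbf F$-span of the components of $A$ (a subspace of $\mathbf F^n$) and extends this partial isometry to an element of $G_2(n)$, which still carries $A$ to $B$; since cohomogeneity is the topological dimension of the orbit space, the inequality follows. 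That argument is uniform in $n$ and needs no hypothesis relating $n$ to $d=\dim_{\tilde{\mathbf F}}V_1$. Your regime-2 argument contains exactly this Witt-extension step (packaged as the stronger stabilization statement $c(\rho(n))=c(\rho(d))$ for $n\geq d$), and its injectivity half already works for every $n$ without the rank hypothesis; had you observed this, the case split and the entire regime-1 slice computation would have been unnecessary. That said, your regime-1 argument is sound and genuinely different from the paper's: the isotropy identification $G(n+1)_v=G(n)_v\times G_2(1)$ at a principal point with independent components, combined with the additivity inequality of Section~\ref{sec:slices-sums} and the invariance of cohomogeneity under slicing, is a legitimate alternative for $n\leq d$ and has the mild virtue of exhibiting the normal space of the larger representation explicitly. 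The only points to tighten are routine: the existence of a principal $v$ with independent components (intersection of two open dense sets), and the bookkeeping of $\mathbf F$-scalars across the $\tilde{\mathbf F}$-tensor product in the quaternionic case, which the paper sidesteps by working throughout with $\mathrm{Hom}_{\tilde{\mathbf F}}(V_1,\mathbf F^n)$.
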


\Pf Since the cohomogeneity is the topological dimension 
of the orbit space, it is enough to show that the orbit 
space of $\rho(n)$ injects into that of $\rho(n+1)$. Since $V_1$ 
is a real representation, we can identify $V_1$ with $V_1^*$ and replace
the tensor product by the space of linear maps 
$\mathcal L_n=\mathrm{Hom}_{\tilde{\mathbf F}}(V_1,\mathbf F^n)$. 
Now $(g_1,g_2)\in G_1\times G_2(n)$ acts on $A\in \mathcal L_n$ 
by mapping it to $g_2Ag_1^{-1}$.
We consider the standard embedding $\mathcal L_n\to \mathcal L_{n+1}$.  
To prove the desired injectivity, we just
need to show that any two elements $A$, $B\in\mathcal L_n$ that
are in the same $G(n+1)$-orbit must be in the same $G(n)$-orbit. 
It is obvious that we can restrict to the case $G_1=\{1\}$.
In this case $A$ and $B$ are respectively given by $m$-tuples
$(a_1,\ldots,a_m)$, $(b_1,\ldots,b_m)$ of elements in 
$\mathbf F^n\subset\mathbf F^{n+1}$
where $m=\dim_{\mathbf{\tilde F}} V_1$. 
Let $g_2\in G_2(n+1)$ such that $g_2A=B$. 
One restricts $g_2$ to the subspace of $\mathbf F^n$ spanned 
by $a_1,\ldots,a_m$ and then extends it to an element of $G_2(n)\subset
G_2(n+1)$. 
\hfill\mbox{}\EPf

\begin{cor}
We have $c(G_1\otimes\SU n)\leq c(G_1\otimes\SU{n+1})+1$. 
\end{cor}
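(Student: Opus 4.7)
The plan is to deduce the corollary from Lemma~\ref{monoton} applied with $G_2(n)=\U n$, bridged by a comparison between the $\SU n$- and $\U n$-tensor products. The main point is that $\SU n$ sits inside $\U n$ as a subgroup of codimension one, so replacing $\U n$ by $\SU n$ in the tensor action can change the cohomogeneity by at most one.

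First, I would record the elementary fact that if $H$ is a closed subgroup of $K$ with $\dim K-\dim H=d$, and both act isometrically on a connected Riemannian manifold $V$, then $c(H,V)\le c(K,V)+d$. Indeed, for any $v\in V$, $H_v=H\cap K_v$ so $\dim H_v\le\dim K_v$, giving
\[
\dim(H\cdot v)=\dim H-\dim H_v\ge \dim H-\dim K_v=\dim(K\cdot v)-d.
\]
Taking $v$ to be $K$-regular, so that $\dim(K\cdot v)=\dim V-c(K,V)$, and then enlarging to an $H$-regular orbit can only make the orbit dimension larger, yielding the claimed inequality $c(H,V)\le c(K,V)+d$.

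Next I would apply this in two places. Since $G_1\times\SU n$ sits in $G_1\times\U n$ as a subgroup of codimension one, the inequality above gives
\[
c(G_1\otimes\SU n)\le c(G_1\otimes\U n)+1.
\]
In the other direction, since $G_1\times\SU{n+1}\subset G_1\times\U{n+1}$, the inclusion of subgroups makes orbits smaller so the orbit space gets at most larger, hence
\[
c(G_1\otimes\U{n+1})\le c(G_1\otimes\SU{n+1}).
\]
Finally, Lemma~\ref{monoton} applied to the standard action of $\U n$ on $\mathbf C^n$ (which is of complex type, so $\tilde{\mathbf F}=\mathbf C$ works whenever $\rho_1$ is of real or complex type; if $\rho_1$ is of quaternionic type the argument runs analogously with $\U n$ replaced by the relevant group, but for the statement as written the $\U n$-case is what we need) yields
\[
c(G_1\otimes\U n)\le c(G_1\otimes\U{n+1}).
\]
Chaining these three inequalities produces the desired bound. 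There is no serious obstacle here: everything is a bookkeeping consequence of Lemma~\ref{monoton} together with the codimension-one difference between $\SU n$ and $\U n$, the latter being the only subtlety worth isolating.
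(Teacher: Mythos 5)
Your proof is correct and is essentially the derivation the paper intends (the corollary is stated without proof, immediately after Lemma~\ref{monoton}): chain $c(G_1\otimes\SU n)\le c(G_1\otimes\U n)+1$ (codimension-one subgroup), the lemma's monotonicity $c(G_1\otimes\U n)\le c(G_1\otimes\U{n+1})$, and $c(G_1\otimes\U{n+1})\le c(G_1\otimes\SU{n+1})$ (passing to a subgroup only enlarges the orbit space). Your isolation of the general fact $c(H,V)\le c(K,V)+\dim K-\dim H$ and your caveat about the type hypothesis in the lemma are both sound.
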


\subsection{Slices of products}\label{sec:slice-prod}

Let $\rho_i$ be a real irreducible representation of a Lie
group $G_i$ on $V_i$, where $i=1$, $2$. Then 
$\rho=\rho_1\otimes_{\mathbf R}\rho_2$ is a real  
representation of $G=G_1\times G_2$ on $V=V_1\otimes_{\mathbf R}V_2$
which is irreducible if at least one of the $\rho_i$ is of real type.
Let $v=v_1\otimes v_2\in V$ be a pure tensor
where $v_i\in V_i$ is regular. Then $G_{v_i}$ is a
principal isotropy subgroup of $\rho_i$ and
the connected isotropy group $(G_v)^0$ equals $H=H_1\times
H_2$, where $H_i=(G_{v_i})^0$. Since the normal space
\[ N_v(Gv)=\R v\oplus\left[v_1\otimes (N_{v_2}(G_2v_2)\ominus\R v_2)\right]
\oplus\left[(N_{v_1}(G_1v_1)\ominus\R v_1)\otimes v_2\right]
\oplus\left[v_1^\perp\otimes v_2^\perp\right], \]
by considering the slice representation at $v$, we get
\[ c(\rho) = c(H,v_1^\perp\otimes v_2^\perp) 
+ c(\rho_1) + c(\rho_2) -1. 
\end{equation*}
We will mostly use this remark in case $c(\rho_i)=1$ for $i=1$, $2$;
then the decomposition of $v_1^\perp\otimes v_2^\perp$ into 
irreducible components is easier. 

In the cases of complex and quaternionic tensor products, one uses
similar though slightly more complicated arguments that we 
explicit below in the individual cases.

\subsection{Cohomogeneity $c(\rho)=1$}  \label{transac}
The list of transitive linear actions on spheres 
is well known (cf.~Theorem of Borel-Montgomery-Samelson).
We include it for easy reference. 
Here and below, 
$\rho$ always denotes a real representation, so e.g.~$\rho=(\SU n,\C^n)$
means the realification of $\C^n$. On the other hand, if $W$ is complex,
$[W]_{\mathbf R}$ denotes a real form.
\[\begin{array}{|c|c|c|c|}
\hline
G & \rho & \mbox{\textsl{Real dim}} & \mbox{\textsl{Remarks}}\\
\hline
 \SO n & \R^n & n & -\\
 \SU n & \C^n & 2n & - \\
 \SP n & \C^{2n} & 4n & -\\
 \G & \R^7 & 7 & -\\
 \Spin7 & \R^8 & 8 & \mbox{spin representation}\\
  \Spin9 & \R^{16} & 16 & \mbox{spin representation}\\
 \SU n\cdot\U1 & \C^n\otimes_{\mathbf C}\C & 2n & - \\
 \SP n\cdot\U1 & \Q^{n}\otimes_{\mathbf C}\C & 4n & - \\
 \SP n\cdot \SP1 & \Q^{n}\otimes_{\mathbf H}\Q & 4n & - \\
\hline
\end{array} \]

\subsection{$G$ is simple and $2\leq c(\rho)\leq8$}\label{sec:simple}
A lemma of Onishchik~\cite[Lemma~3.1]{On} explains that the dimension of 
a complex irreducible representation is an increasing function
of the highest weight, where one uses a partial order
naturally defined on the set of dominant integral weights. 
Using this lemma, it is a matter of patience to list real 
irreducible representations of low cohomogeneity of simple groups.
In the tables below we go up to cohomogeneity $8$ (the tables 
in~\cite[ch.~I, \S2]{hh} are also helpful). Up to a few cases,
our list can also be obtained from Lemma~2.6 in~\cite{K}.

\subsubsection{Polar representations}\label{sec:polar}

\[\begin{array}{|c|c|c|c|c|c|}
\hline
G & \rho & \mbox{\textsl{Conditions}} & c(\rho) & \mbox{\textsl{Type}}&
\mbox{\textsl{Symmetric space}}\\
\hline
 \SO n & S^2_0(\R^n) & 3\leq n\leq 9 & n-1 & r & \SU n/\SO n\\
 \SP n & [(\Lambda^2\C^{2n}\ominus\C]_{\mathbf R} &  3\leq n\leq 9 & n-1 &
r & \SU{2n}/\SP n\\
 \F & \R^{26} & - & 2 & r & \E6/\F\\
 \SU n & [\C^n\otimes_{\mathbf C}\C^{n*}\ominus\C]_{\mathbf R}
& 3\leq n\leq 9 & n-1 & r & \mbox{Adjoint}\\
 \SO n &  \Lambda^2\R^n
& 5\leq n\leq17 & \left[\frac n2\right] & r &
\mbox{Adjoint}\\ 
 \SP n & [S^2(\C^{2n})]_{\mathbf R} & 2\leq n\leq8 & n & r &\mbox{Adjoint}\\
 \F & \R^{52} & - & 4 & r & \mbox{Adjoint} \\
 \G & \R^{14} & - & 2 & r & \mbox{Adjoint}  \\
 \E6 & \R^{78} & - & 6  & r & \mbox{Adjoint} \\
 \E7 & \R^{133} & - & 7  & r & \mbox{Adjoint} \\
 \E8 & \R^{248} & - & 8  & r & \mbox{Adjoint} \\
 \SU n & \Lambda^2\C^n & 5\leq n\leq17, \mbox{$n$ odd} & \frac{n-1}2 & c & 
\SO{2n}/\U n\\
 \Spin{10} & \C^{16} & - & 2 & c & \E6/(\U1\Spin{10})\\
 \SP4 & [\Lambda^4\C^8\ominus\Lambda^2\C^8]_{\mathbf R} 
& - & 6 & r & \E6/\SP4\\
 \SU8 & [\Lambda^4\C^8]_{\mathbf R} & - & 7 & r & \E7/\SU8 \\
\Spin{16} & \R^{128} & - & 8 & r & \E8/\Spin{16}\\
\hline
\end{array} \]

\subsubsection{Non-polar representations}\label{sec:nonpolar}

\[\begin{array}{|c|c|c|c|c|}
\hline
G & \rho & \mbox{\textsl{Conditions}} & c(\rho) & \mbox{\textsl{Type}}\\
\hline
 \SO3 & \R^n & n=7,9,11 & n-3 & r\\
 \SU2 & \C^4 & - & 5 & q \\
 \SU6 & \Lambda^3\C^6 & - & 7 & q\\
 \SU n & \Lambda^2\C^n & 6\leq n\leq 14, \mbox{$n$ even} & \frac n2+1 & c \\
 \SU n & S^2\C^n & 3\leq n\leq 7 & n+1 & c \\
 \SP3 & \Lambda^3\C^6\ominus\C^6 & - & 7 & q\\
 \Spin{12} & \C^{32} & - & 7 & q \\
 \E6 & \C^{27} & - & 4 & c\\
 \E7 & \C^{56} & - & 7 & q\\
\hline
\end{array} \]

\section{Products of mixed type}\label{sec:mixed}

Having already discussed simple groups, we next turn to 
the case of representations $\rho$ of 
non-simple groups $G$ with $c(\rho)=4$ or $5$. 
We roughly divide the discussion according to 
the types of the factors and start with the case of mixed type.

\begin{lem}\label{lem:so-sp}
We have $c(\SO m\otimes \SP n)\geq11$ for $m\geq3$ and $n\geq2$, 
and $c(\SO m\otimes \SP1)\geq7$ for $m\geq4$.
Moreover $c(\SO3\otimes\SP1)=6$ and $c(\SO3\otimes\U2)=5$.
\end{lem}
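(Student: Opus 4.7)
The strategy is to combine three tools: the basic dimension bound $c(\rho)\geq\dim V-\dim G$, the monotonicity Lemma~\ref{monoton}, and the slice formula for tensor products of Subsection~\ref{sec:slice-prod}.

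For $c(\SO m\otimes\SP n)\geq 11$ with $m\geq3$, $n\geq2$, I would first verify the base case $(m,n)=(3,2)$: here $\dim V=4mn=24$ and $\dim G=3+10=13$, so the dimension bound yields $c\geq 11$. Monotonicity in $n$ is a direct application of \lref{monoton} with $\rho_1=(\SO m,\R^m)$ of real type and $\mathbf F=\Q$; monotonicity in $m$ is not literally covered by the hypothesis of that lemma (the types do not match), but its proof adapts verbatim, embedding $\R^m\otimes_{\R}\Q^n\hookrightarrow\R^{m+1}\otimes_{\R}\Q^n$ and checking that any element of $\SO{m+1}\times\SP n$ carrying one generic representative to another must restrict to $\SO m\times\SP n$. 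The bound $c(\SO m\otimes\SP1)\geq 7$ for $m\geq 4$ is handled identically, with base case $(4,1)$ giving $\dim V-\dim G=7$.

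For $c(\SO3\otimes\SP1)=6$, the lower bound is dimensional. To establish the matching upper bound I would identify $\R^3\otimes_{\R}\Q=\Q^3$, so that $(A,q)\in\SO3\times\SP1$ acts on $v\in\Q^3$ by $v\mapsto Avq^{-1}$, and then check that $v=(1,i,j)^T$ has trivial stabilizer: writing the fixed-point equation $Av=vq$ componentwise and expanding $q\in\Q$ in its four real coordinates forces $q=1$ and $A=I$ after three short linear computations, so the orbit of $v$ is $6$-dimensional.

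For $c(\SO3\otimes\U2)=5$, the lower bound $c\geq 12-7=5$ is again dimensional. For the upper bound I would apply the slice formula of Subsection~\ref{sec:slice-prod} at a pure tensor $v=v_1\otimes v_2$ with unit $v_i$. Both factor representations being transitive on their spheres, the formula collapses to $c(\rho)=c(H,v_1^\perp\otimes v_2^\perp)+1$ with $H=\SO2\times\U1$ the product of the slice-isotropies. Taking $v_2=(1,0)\in\C^2$, the complement $v_2^\perp\subset\R^4$ splits as $i\R\oplus\C$ under $\U1=\{\mathrm{diag}(1,e^{i\phi})\}$ (trivial and standard action), so $v_1^\perp\otimes v_2^\perp=\R^2\oplus\C^2$ with $H$ acting on $\R^2$ by $\SO2$-rotation and on $\C^2$ through the maximal torus $\SO2\cdot\U1\subset\U2$. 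Splitting off the $\R^2$ summand by Subsection~\ref{sec:slices-sums} leaves $\U1$ acting on $\C^2$ by scalar multiplication (cohomogeneity $3$), giving $c(H,v_1^\perp\otimes v_2^\perp)=1+3=4$ and the desired equality. The main obstacle is this last case: one must correctly identify the decomposition of $v_2^\perp$ as a $\U1$-module (its natural complex structure plays a role absent in the $\SP1$ case) and organize the iterated slice and sum computation so that it terminates in a manifestly controllable action.
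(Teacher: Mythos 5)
Your proposal is correct and follows the paper's route: dimension bounds at the base cases $(3,2)$ and $(4,1)$ combined with \lref{monoton}, and a slice computation at a pure tensor for $\SO3\otimes\U2$ (your decomposition $v_1^\perp\otimes v_2^\perp=\R^2\oplus(\R^2\otimes_{\mathbf R}\C)$ and the resulting count $c=1+(1+3)=5$ is exactly the ``direct computation using slices'' the paper alludes to). Two small points of difference are worth recording. First, you rightly observe that \lref{monoton} as stated only yields monotonicity in the symplectic factor, and that monotonicity in $m$ requires rerunning its proof with the roles of the factors exchanged; the paper silently elides this, so your patch is a genuine (and correct) clarification rather than a detour. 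Second, for $c(\SO3\otimes\SP1)=6$ the paper deduces the value from the $\U2$ case, using that $\SO3\times\U2$ acts with trivial principal isotropy and cohomogeneity $5$, so that dropping the circle factor can raise the cohomogeneity by at most one while the dimension bound forces it to be at least $12-6=6$; you instead exhibit the explicit vector $(1,i,j)^T\in\Q^3$ with trivial stabilizer, which is equally valid, self-contained, and arguably more transparent, at the cost of a short quaternionic calculation (which I checked: the three componentwise equations do force $q=\pm1$ and $A=qI$, and $\det A=1$ then gives the identity).
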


\Pf By dimensional reasons, $c(\SO3\times\SP2)\geq11$ and 
$c(\SO4\times\SP1)\geq7$, so the first two assertions 
follow from Lemma~\ref{monoton}. The cohomogeneity of $\SO3\otimes\U2$
can be directly computed by using slices and noticing that 
the principal isotropy group is trivial, and then the other
cohomogeneity follows. \EPf

\begin{lem}\label{lem:su-so}
For $m\geq3$, $n\geq3$ we have $c(\SU m\otimes \SO n)\geq 7$
unless $m\geq4$ and $n=3$.
Moreover $c(\U m\otimes\SO3)=6$ for $m\geq3$.
\end{lem}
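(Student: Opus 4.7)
The strategy is a direct computation of the $\U m\otimes\SO n$-orbit space via the map $A\mapsto A^*A$, combined with monotonicity (\lref{monoton}) and elementary dimension bounds.

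First I would identify the action of $\U m\otimes\SO n$ on $\C^m\otimes_{\mathbf R}\R^n\cong\C^m\otimes_{\mathbf C}\C^n$ with its natural action on $m\times n$ complex matrices $A$, where $U\in\U m$ acts by $A\mapsto UA$ and $O\in\SO n$ by $A\mapsto AO^T$ (via the complex-linear extension of $O$ to $\C^n=\R^n\otimes_{\mathbf R}\C$). For $m\geq n$, the map $A\mapsto A^*A$ is surjective onto the closed cone $\mathrm{Herm}^n_+$ of positive-semidefinite Hermitian $n\times n$ matrices and separates $\U m$-orbits; under it the residual $\SO n$-action becomes conjugation $H\mapsto OHO^T$. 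Writing $H=X+iY$ with $X$ real symmetric positive-definite and $Y$ real antisymmetric, a generic $X$ has $n$ distinct positive eigenvalues, so its $\SO n$-stabilizer is the finite diagonal sign-subgroup of determinant one, while a generic $Y$ is not fixed by any non-trivial element of this subgroup. Thus the generic $\SO n$-stabilizer on $\mathrm{Herm}^n_+$ is trivial, and
\[
c(\U m\otimes\SO n)\;=\;\dim\mathrm{Herm}^n-\dim\SO n\;=\;\frac{n(n+1)}{2}\qquad(m\geq n).
\]
Specializing to $n=3$ gives $c(\U m\otimes\SO 3)=6$ for $m\geq 3$ (the second assertion), and to $n=4$ gives $c(\U m\otimes\SO 4)=10$ for $m\geq 4$.

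For the first assertion I would apply \lref{monoton} with $\rho_1$ the realification of $(\SU m,\C^m)$ and varying second factor $(\OG n,\R^n)$, so that the hypothesis $\tilde{\mathbf F}=\R\subset\R=\mathbf F$ is satisfied. This gives monotonicity of $c(\SU m\otimes\SO n)$ in $n$ and reduces to the cases $n=3$ and $n=4$. The case $m=n=3$ and the case $(m,n)=(3,4)$ both follow from the trivial bound $c\geq\dim V-\dim G$, which evaluates to $18-11=7$ and $24-14=10$ respectively. For $m\geq 4$ and $n=4$, the subgroup inclusion $\SU m\otimes\SO 4\subset\U m\otimes\SO 4$ yields $c(\SU m\otimes\SO 4)\geq c(\U m\otimes\SO 4)=10$ by the first step.

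The main (mild) technical point will be the generic-stabilizer check on $\mathrm{Herm}^n$, which is a routine linear-algebra computation via the decomposition $H=X+iY$; I do not anticipate any serious obstacle.
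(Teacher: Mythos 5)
Your proof is correct, and it reaches the stated conclusions by a genuinely different route for the key computation. The paper disposes of $n\geq4$ purely by the dimension bound $c(\U3\otimes\SO4)\geq 24-15=9$ together with \lref{monoton}, and handles $n=3$ by iterating slice representations: $c(\SU m\otimes\SO3)=1+c(\SU{m-1}\times\SO2,\,i\R^2\oplus\C^{m-1}\otimes_{\mathbf C}\C^2)=2+c(\SU{m-1},2\C^{m-1})$, which equals $7$ for $m=3$ and $6$ for $m>3$, with ``a similar computation'' for $\U m\otimes\SO3$. You instead use the classical invariant-theoretic reduction $A\mapsto A^*A$, identifying the $\U m$-orbit space with the cone of positive-semidefinite Hermitian $n\times n$ matrices for $m\geq n$ and reducing to the conjugation action of $\SO n$ there; this buys the closed formula $c(\U m\otimes\SO n)=n(n+1)/2$ for all $m\geq n$, which is more information than the paper extracts and simultaneously yields the $n=3$ equality and the $n=4$ lower bound, while the remaining cases ($m=n=3$ for $\SU3$, and $(m,n)=(3,4)$) fall to the same dimension bound the paper uses. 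Two minor remarks: for $n$ even the generic stabilizer of the conjugation action on Hermitian matrices is $\{\pm I\}$ rather than trivial, which is harmless since it is still finite and the dimension count is unaffected; and your application of \lref{monoton} with $\rho_1$ the realification of $(\SU m,\C^m)$ and $\tilde{\mathbf F}=\mathbf F=\R$ takes the same mild liberty with the ``type'' hypothesis (and with $\SO n$ versus $\OG n$, which have the same orbit-space dimension) that the paper itself takes elsewhere; the proof of that lemma only uses that $V_1$ is a real vector space, so this is fine.
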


\Pf By dimensional reasons, $c(\U3\otimes\SO4)\geq9$.
Hence $c(\SU m\otimes\SO n)\geq9$ if $m\geq3$ and $n\geq4$. 

The case $n=3$ is dealt with separately:
\begin{eqnarray*}
 c(\SU m\otimes\SO3)&=&1+c(\SU{m-1}\times\SO2,i\R^2\oplus\C^{m-1}\otimes_{\mathbf C}\C^2)\\
&=&2+c(\SU{m-1},2\C^{m-1})\\
&=&\left\{\begin{array}{ll}
         7, & \mbox{if $m=3$,} \\
         6, & \mbox{if $m>3$.}
       \end{array}\right. 
\end{eqnarray*}
A similar computation shows that  
$c(\U m\otimes\SO3)=6$ for $m\geq3$. \EPf

\begin{lem}\label{lem:su-sp}
We have:
\begin{enumerate}
\item $c(\U m\otimes\SP2)\geq6$ if $m\geq4$;
\item $c(\U3\otimes\SP2)=5$ and $c(\SU3\otimes\SP2)=6$;
\item $c(\SU m\otimes \SP n)\geq7$ if $m\geq3$ and $n\geq3$. 
\end{enumerate}
\end{lem}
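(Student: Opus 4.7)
\emph{Plan.} Parts (1) and (3) follow from the basic dimension bound together with Lemma~\ref{monoton}. For (1), the bound gives
\[ c(\U{4}\otimes\SP{2})\;\geq\;4\cdot 4\cdot 2-(16+10)\;=\;6, \]
and the monotonicity of $c(\U{m}\otimes\SP{2})$ in $m$ handles all $m\geq 4$. For (3), the same bound at $m=n=3$ gives
\[ c(\SU{3}\otimes\SP{3})\;\geq\;4\cdot 3\cdot 3-(8+21)\;=\;7, \]
and monotonicity applied separately to each factor yields the general case.

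The substantive content is (2). The dimension bound immediately gives $c(\U{3}\otimes\SP{2})\geq 5$ and $c(\SU{3}\otimes\SP{2})\geq 6$. For the opposite inequalities I would identify $\C^{3}\otimes_{\mathbf C}\C^{4}$ with the space of $3\times 4$ complex matrices, on which $(U,P)\in\U{3}\times\SP{2}$ acts by $A\mapsto UAP^{-1}$ via the inclusion $\SP{2}\subset\U{4}$. The enveloping action of $\U{3}\times\U{4}$ is polar of cohomogeneity~$3$ (singular value decomposition), its regular orbits represented by $S=(\mathrm{diag}(s_1,s_2,s_3)\mid 0)$ with pairwise distinct $s_i>0$; a direct computation shows that the $\U{3}\times\U{4}$-stabilizer of $S$ is the $4$-torus consisting of $(\mathrm{diag}(u_1,u_2,u_3),\,\mathrm{diag}(u_1,u_2,u_3,p))$ with $u_i,p\in\U{1}$.

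The key step is to analyze the $\U{3}\times\SP{2}$-stabilizer at a generic point $A=U_1 S V_1^{-1}$, $V_1\in\U{4}$. After conjugation by $(U_1,V_1)$, the stabilizer condition becomes the intersection of the standard diagonal torus $T^{4}\subset\U{4}$ with the conjugate $V_1^{-1}\SP{2}\,V_1$. On the Lie algebra level, inside $\mathfrak{u}(4)$ (of real dimension $16$) the Cartan $\mathfrak{t}^{4}$ has dimension $4$ and $V_1^{-1}\mathfrak{sp}(2)V_1$ has dimension $10$, so $4+10<16$ and for generic $V_1$ the two subspaces meet only in zero. Consequently the generic isotropy is finite, the principal orbit has real dimension $19$, and $c(\U{3}\otimes\SP{2})=5$; intersecting further with $\det U=1$ preserves finiteness and yields $c(\SU{3}\otimes\SP{2})=6$.

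\emph{Main obstacle.} The crux is the transversality argument in (2): one must verify that for generic $V_1$ the subspaces $\mathfrak{t}^{4}$ and $V_1^{-1}\mathfrak{sp}(2)V_1$ of $\mathfrak{u}(4)$ are in general position, and hence meet only at the origin. Here the polar structure of the enveloping $\U{3}\times\U{4}$-action does not suffice by itself; what is decisive is that the extra symplectic constraint cuts the $2$-torus which appears in the stabilizer at the special choice $V_1=I$ down to a finite group for generic $V_1$, thereby dropping the cohomogeneity from $7$ to the dimensional lower bound.
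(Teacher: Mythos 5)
Your overall route for (1) and (3) is the same as the paper's (dimension bound at a base case plus \lref{monoton}), and your singular-value-decomposition setup for (2) is a reasonable way to organize the ``direct computation'' that establishes $c(\U3\otimes\SP2)=5$. However, there are two gaps. The smaller one is in (3): ``monotonicity applied separately to each factor'' is not available for the $\SU m$ factor. \lref{monoton} covers only the standard representations of $\OG n$, $\U n$, $\SP n$; for $\SU n$ the paper records only the weaker estimate $c(G_1\otimes\SU n)\leq c(G_1\otimes\SU{n+1})+1$, which loses one unit each time $m$ grows and cannot carry the bound $7$ from $m=3$ to all $m$ (note that the raw dimension bound for $\SU m\otimes\SP3$ even becomes non-positive for large $m$). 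The correct patch is the one the paper uses: observe $c(\SU m\otimes\SP n)\geq c(\U m\otimes\SP n)$, add the base case $c(\U4\otimes\SP3)\geq 48-37=11$, and apply \lref{monoton} to the $\U m$ factor.

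The more serious gap is the one you yourself flag in (2): the claim that $\mathfrak t^4\cap V_1^{-1}\mathfrak{sp}(2)V_1=0$ for generic $V_1\in\U4$ is asserted, not proved, and the count $4+10<16$ is not a proof. The subspaces $V_1^{-1}\mathfrak{sp}(2)V_1$ do not range over all $10$-planes of $\mathfrak u(4)$ but only over a single $\mathrm{Ad}(\U4)$-orbit of subalgebras, and for such thin families the naive dimension count can fail --- for instance, every conjugate of $\mathfrak t^4$ meets the center of $\mathfrak u(4)$ even though $4+1<16$. A genuine verification is needed; one workable argument is spectral: every $X\in\mathfrak{sp}(2)\subset\mathfrak u(4)$ satisfies $X^\top=-JXJ^{-1}$, so its spectrum is symmetric under negation, whence a nonzero $i\,\mathrm{diag}(\lambda_1,\dots,\lambda_4)\in\mathfrak t^4$ conjugate into $\mathfrak{sp}(2)$ must have $\{\lambda_j\}=\{-\lambda_j\}$, a codimension-two condition on $\mathfrak t^4$; combining this with a count of the conjugators carrying such an element into $\mathfrak{sp}(2)$ shows that the bad set of $V_1$ has positive codimension in $\U4$. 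Until some such step is supplied, the equalities $c(\U3\otimes\SP2)=5$ and hence $c(\SU3\otimes\SP2)=6$ are not established; everything else in your argument for (2) (the stabilizer of $S$ in $\U3\times\U4$, the reduction of the isotropy group of $A$ to $T^4\cap V_1^{-1}\SP2 V_1$, and the passage from $\U3$ to $\SU3$) is correct.
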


\Pf By dimensional reasons, $c(\U4\otimes\SP2)\geq6$, 
$c(\U4\otimes\SP3)\geq11$ and $c(\SU3\otimes\SP3)\geq7$.
Hence~(a) and~(c) follow from Lemma~\ref{monoton}.

On the other hand, one computes directly that 
$c(\U 3\otimes \SP2)=5$, from which follows 
$c(\SU 3\otimes \SP2)=6$. This proves~(b). \EPf

\section{The case $G=G_1\times G_2$, $V=V_1\otimes_{\mathbf R}V_2$}
\label{sec:real-prod}

Here $G_1$ and $G_2$ are non-necessarily simple,
$\rho_i=(G_i,V_i)$ are real irreducible representations and at least one
of them is of real type. Let $m=\dim_{\mathbf R} V_1\leq 
n=\dim_{\mathbf R} V_2$. Recall that $\SO m\otimes\SO n$ is polar.
Since $c(\rho)=4$ or $5$, in view 
of Subsection~\ref{sec:subgroups-products} we may assume that $2\leq m\leq4$. 
Moreover, owing to the next lemma, $c(\rho_1)=c(\rho_2)=1$. 

\begin{lem}
Let $\rho_1=(G_1,V_1)$ and $\rho_2=(G_2,V_2)$ be 
real irreducible non-trivial representations 
(non-necessarily of real type)
and consider their real tensor 
product $\rho=(G,V)$, where $G=G_1\times G_2$ and 
$V=V_1\otimes_{\mathbf R}V_2$.
If either $\rho_1$ or  $\rho_2$ has cohomogeneity 
bigger than one, then $c(\rho)\geq6$. 
\end{lem}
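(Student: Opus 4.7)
The plan is to apply the slice formula of Subsection~\ref{sec:slice-prod} at a pure tensor $v=v_1\otimes v_2$ (with each $v_i$ principal for $\rho_i$) and then bound the cohomogeneity of the slice representation of $H:=H_1\times H_2$ on $v_1^\perp\otimes v_2^\perp$. By symmetry I may assume $a:=c(\rho_1)\geq 2$ and set $b:=c(\rho_2)\geq 1$, so that
\[ c(\rho) \;=\; c(H,\,v_1^\perp\otimes v_2^\perp)\;+\;a+b-1. \]
Write $v_i^\perp=T_i\oplus N_i'$ with $T_i=T_{v_i}(G_iv_i)$ and $\dim N_i'=c(\rho_i)-1$; since the slice representation of a principal isotropy at a principal point is trivial, each $H_i$ acts trivially on its own $N_i'$.

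The first step extracts the $H$-trivial subspace $N_1'\otimes N_2'\subset v_1^\perp\otimes v_2^\perp$ of dimension $(a-1)(b-1)$, yielding $c(H,v_1^\perp\otimes v_2^\perp)\geq (a-1)(b-1)$, and hence $c(\rho)\geq ab$. This already handles all cases with $ab\geq 6$; what remains is $(a,b)=(2,2)$ and $(a,1)$ with $2\leq a\leq 5$.

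For $(2,2)$, since $\dim N_i'=1$, the $H$-invariant subspace $T_1\otimes N_2'\oplus N_1'\otimes T_2\oplus N_1'\otimes N_2'$ identifies with $T_1\oplus T_2\oplus\R$ as an $H_1\times H_2$-representation in which $H_i$ acts only on its own $T_i$-summand; such a direct sum has cohomogeneity $c(H_1,T_1)+c(H_2,T_2)+1\geq 3$, because $T_i\neq 0$ (since $\dim V_i>c(\rho_i)$ for any non-trivial irreducible $\rho_i$) and any compact group on a non-zero vector space has cohomogeneity at least one. Hence $c(\rho)\geq 3+(2+2-1)=6$.

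For the outstanding cases $b=1$, $2\leq a\leq 5$, I combine the slice formula with the bound $c(\rho)\geq\min\{\dim V_1,\dim V_2\}$ of Subsection~\ref{sec:subgroups-products}, which alone gives $c(\rho)\geq 6$ as soon as $\min\{\dim V_i\}\geq 6$. Since the only real irreducible representation of cohomogeneity $\geq 2$ of dimension $\leq 5$ is $(\SO3,\R^5)$, and the sphere-transitive representations of dimension $\leq 5$ form a short sublist of Subsection~\ref{transac}, what remains is a finite list of explicit tensor products. Each of these is dispatched either by the crude dimensional bound $c(\rho)\geq\dim V-\dim G_1-\dim G_2$ or, in borderline cases where the principal isotropy of $\rho_1$ is too large for this estimate (for instance $\rho_1=(\F,\R^{26})$ tensored with a low-dimensional cohomogeneity-one $\rho_2$), by an explicit second slice computation at a non-principal base point such as $(u_1,0)\in V_1\otimes\R^2\cong V_1\oplus V_1$. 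The main obstacle is precisely this last enumeration, since it requires going beyond gross dimensional estimates for a handful of borderline tensor products with large principal isotropy.
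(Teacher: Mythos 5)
Your setup (the slice formula at a regular pure tensor) and your treatment of the cases where both factors have cohomogeneity at least two are fine — indeed the trivial summand $N_1'\otimes N_2'$ together with the subspace $T_1\oplus T_2\oplus\R$ for $(a,b)=(2,2)$ disposes of those cases cleanly. The genuine gap is in the case $b=1$, $2\leq a\leq 5$, which is in fact the main content of the lemma. Your plan there rests on a ``finite list of explicit tensor products'' to be checked by the bounds $c(\rho)\geq\min\{\dim V_1,\dim V_2\}$ and $c(\rho)\geq\dim V-\dim G$, but neither claim survives inspection. First, the remaining family is not finite in any usable sense: when $\dim V_1\leq 5$, i.e.\ $\rho_1=(\SO3,\R^5)$, the factor $\rho_2$ ranges over \emph{all} sphere-transitive representations, and for $\rho=(\SO3\times\SO n,\R^5\otimes\R^n)$ with $n$ large both of your bounds fail ($\min\{\dim V_i\}=5$ only gives $c(\rho)\geq5$, and $\dim V-\dim G=5n-3-n(n-1)/2$ is negative for $n\geq11$). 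Second, when instead $\dim V_2\leq5$, the factor $\rho_1$ ranges over all irreducible representations with $2\leq c(\rho_1)\leq5$; for non-simple $G_1$ this is essentially the classification the lemma is being used to establish, so an enumeration here is circular, and even for simple $G_1$ you concede that a ``handful of borderline tensor products with large principal isotropy'' resist your estimates. That concession is exactly where the proof is missing.

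The missing idea is to keep counting $H$-invariant summands of $v_1^\perp\otimes v_2^\perp$ rather than switching to dimension bounds. With $b=1$ one has $v_2^\perp=T_2$ and $v_1^\perp=T_1\oplus N_1'$ with $\dim N_1'=a-1$ and $H_1$ acting trivially on $N_1'$, so
\[
v_1^\perp\otimes v_2^\perp\;=\;(a-1)\,T_2\;\oplus\;T_1\otimes T_2 ,
\]
a sum of $a$ nonzero $H$-invariant subspaces, each contributing at least $1$ to the cohomogeneity; hence $c(\rho)\geq a+a=2a$, which settles $a\geq3$ with no case analysis at all. For $a=2$ one needs one extra geometric input: $\rho_1$ is then polar and irreducible, its principal orbit is an irreducible isoparametric submanifold of codimension $2$ and therefore has at least $3$ distinct curvature distributions, so $T_1$ splits into at least $3$ $H_1$-invariant summands; thus $c(H,T_1\otimes T_2)\geq3$ and $c(\rho)\geq 2+1+3=6$. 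This is precisely how the paper argues (with the roles of the two factors interchanged), and it replaces your entire residual enumeration.
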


\Pf Fix $v=v_1\otimes v_2\in V$ where $v_i\in V_i$ is 
$G_i$-regular. Write $H=(G_v)^0$ and $H_i=((G_i)_{v_i})^0$.
Then $H=H_1\times H_2$. Without loss of generality, we 
may assume that $c(\rho_1)=1$ and $n=c(\rho_2)\geq2$.
Then 
\[ c(\rho)=n+c(H,v_1^\perp\otimes v_2^\perp). \]
Put $U=T_{v_2}(G_2v_2)$. Then $U\neq0$ and
\[ v_1^\perp\otimes v_2^\perp=(n-1)v_1^\perp\otimes\R\oplus v_1^\perp\otimes U. \]
If $n\geq3$, then it follows from the above 
that $c(\rho)\geq6$. Suppose $n=2$. 
Then $\rho_2$ is polar and $G_2(v_2)$ is an isoparametric 
submanifold of $V_2$. Since $\rho_2$ is irreducible, 
$G_2(v_2)$ has at least $3$ distinct principal curvatures 
so $U$ has at least $3$ $H_2$-irreducible components. 
Thus $c(H,v_1^\perp\otimes U)\geq3$ and it follows that 
\[ c(H,v_1^\perp\otimes v_2^\perp)\geq4 \]
and
\[ c(\rho)\geq 2+4=6, \]
as desired. \EPf

\subsection{$m=2$}

Then $\rho_1=(\SO2,\R^2$). Also, $\rho_2=(G_2,V_2)$ 
is of real type.
Running through the cases in which $c(\rho_2)=1$, the only non-polar
$\rho$ that we get are $(\SO2\times \Spin9,\R^2\otimes\R^{16})$,
and $(\SO2\times \SP n\SP1,\R^2\otimes(\Q^n\otimes_{\mathbf H}\Q))$ 
where $n\geq2$, both of which have 
$c(\rho)=3$ and copolarity $1$~\cite[Prop.~7.12]{gt0}. 

\subsection{$m=3$ or $4$}\label{so4spin7}
Assume first that $\rho_1=(\SO m,\R^m)$.
Running through the cases in which $\rho_2$ is of real type
and using the dimension bound,
we get that $\rho$ is one of $(\SO3\times \G,\R^3\otimes\R^7)$, 
$(\SO3\times \Spin7,\R^3\otimes\R^8)$ or
$(\SO4\times \Spin7,\R^4\otimes\R^8)$.
(Note 
that $c(\SO m\otimes \SP n\SP1)\geq8$ for $m\geq3$ and $n\geq2$
by Lemma~\ref{lem:so-sp}.)
The second representation
is   orbit equivalent to $(\SO3\times\SO8,\R^3\otimes\R^8)$
and hence polar~\cite{eh}. The first one has $c(\rho)=4$ and copolarity 
$2$~\cite[Lemma 6.11]{gt}. The last one has $c(\rho)=5$ and 
copolarity~$3$, which can be checked via a similar reasoning as 
in~\cite[Lemma 6.11]{gt} and using Lemma~7.13~\emph{loc.cit.}
If $\rho_2$ is not of real type, we invoke Section~\ref{sec:mixed}
to get one more example, namely $\SO3\otimes\U2$. 

Finally, the only other possibilities for $\rho_1$ are that 
it equals $(\SU2,\C^2)$ or $(\U2,\C^2)$. Then $\rho_2$ is of real
type; again this case is covered by Section~\ref{sec:mixed}
and we get no other examples. 

\section{The case $G=G_1\times G_2$, $V=V_1\otimes_{\mathbf H}V_2$}
\label{sec:quat-prod}

In principle, $G_1$ and $G_2$ could be non-simple.
The $\rho_i=(G_i,V_i)$ are real irreducible representations and both 
of them are of quaternionic type. The first remark is that 
we may assume that $G_1$ and $G_2$ are simple.
Indeed, if, say $G_2$, is not simple, then $V_2$ is a 
tensor product $W_1\otimes_{\mathbf R} W_2$ where 
$W_1$ is of real type and $W_2$ is of quaternionic type;
by rearranging the factors in $V$ we see that this case is included
in Section~\ref{sec:real-prod}. 
Recall that $\SP m\otimes\SP n$ is polar, so
this case will be omitted in the sequel.

Assume $G_1=\SP1$. If $V_1=\mathbf H$ then we must have 
$c(\rho_2)\leq c(\rho)+\dim\SP1 \leq 8$. 
Since $G_2$ is simple, running through the list in section~\ref{sec:nonpolar}
we get five  polar representations $\rho$ associated to irreducible
quaternionic-K\"ahler symmetric spaces.

For $m\geq 3$, we have  
$c(\SP n, m \cdot \mathbf H ^n) >8$ for $n\geq1$.  
Therefore $c(\SP1 \times G_2 , \mathbf H ^m \otimes _{\mathbf H} V_2 ) \geq 
c(\SP1 \times \SP n , \mathbf H ^m \otimes_{\mathbf H}  \mathbf H^n ) \geq
c(\SP n , m \cdot \mathbf H ^n ) -3 >5$.  
Moreover, if $c(\rho_2) >1$ then $c(\rho_2) \geq 5$, due to
Subsection ~\ref{sec:nonpolar}. Hence $c(2\rho_2) \geq 10$ and 
$c(\SP1 \times G_2, \mathbf H^2  \otimes_{\mathbf H} V_2) \geq 7$.  

In the case $G_1 =\SP 1$, we are thus
left with the representations 
$(\SP 1 \times \SP n , \mathbf H^2 \otimes_{\mathbf H} \mathbf H^n)$,
but they have cohomogeneity $3$ and copolarity~$1$~\cite[Prop.~7.12]{gt0}.

Thus we may assume that both groups $G_i$ are not equal to $\SP 1$.
Let $m=\dim_{\mathbf H} V_1\leq n=\dim_{\mathbf H} V_2$.
Since  $\SP m\otimes\SP n$ is polar  and we are interested in the cases   
 $c(\rho)=4$ or $5$,  we may assume that  $m\leq 4$. Hence $2\leq m\leq 4$
and we deduce that $\rho_1(G_1)=\SP m$.

We rule out those cases by showing $c(\rho)\geq6$ as follows. 
In view of Lemma~\ref{monoton}, we may assume $m=2$. 
Now if $c(\rho_2) \geq 8$, then
the same argument as above, together with $\dim (G_1)=10$, 
implies that $c(\rho) \geq 2\cdot 8-10 =6$. Hence it remains
to check for $\rho_2$ equal to one of 
the four representations in the table in Subsection~\ref{sec:nonpolar}.  
In all cases, we finish by using the basic dimension bound.

\section{The remaining case: $G=G_1\times G_2$, $V=V_1\otimes_{\mathbf C}V_2$}
\label{sec:cx-prod}

For $\rho=(G,V)$ with $G$ non-simple,
we may assume that all tensor products in $V$ are over $\C$
due to Sections~\ref{sec:real-prod} and~\ref{sec:quat-prod}.
It follows that there are no factors of real type and at most one factor
of quaternionic type. 
It is also useful to recall that $\SU m\otimes\U n$ is always polar
and $\SU m\otimes\SU n$ is polar if and only if $m\neq n$; otherwise, it
has cohomogeneity $m+1$ and copolarity $m-1$~\cite[\S~3.3]{got}.

Consider first the case $\rho=(\U1\times G_2,\C\otimes_{\mathbf C}V_2)$
where $G_2$ is simple, $\rho_2=(G_2,V_2)$ is real irreducible
of complex or quaternionic type
and~$c(\rho_2)>1$. A glance at the tables in Section~\ref{sec:simple} yields
only one example, namely, $(\U2,\C^4)$. 
We will show there is only one further example, with cohomogeneity~$5$, 
a circle factor and trivial principal isotropy group.  
Hence we may assume in the sequel that a circle factor is always present.

Assume the factor of quaternionic type
is present. We consider first the case 
$\rho(G)\subset\SU2\otimes\SU m\otimes\U n=
\SU2\otimes\U m\otimes\SU n$ where 
$3\leq m\leq n$. It follows from Lemma~\ref{monoton} that 
$c_{m,n}=c(\SU2\otimes\SU m\otimes\U n)$ increases with $m$ and $n$,
but note that $c_{3,3}\geq16$ by dimensional reasons. 

Consider next the case $\rho$ is of the form
$(\U2\times G_2,\C^2\otimes_{\mathbf C}V_2)$, where $\rho_2=(G_2,V_2)$
is real irreducible of complex type with $G_2$ simple. 
We have $c(\rho)\geq c(2\rho_2)-4>2c(\rho_2)-4\geq6$ for $c(\rho_2)\geq5$.
Otherwise $2\leq c(\rho_2)\leq4$ and we can use the
dimension bound to rule out the few possibilities given 
in the tables in Section~\ref{sec:simple}.

Hence if the factor of quaternionic type
is present, the discussion in Section~\ref{sec:mixed} yields that
$\rho$ is $\U3\otimes\SP2$.

Finally we consider the case in which all factors are of complex type. 
The case $\rho(G)\subset\SU m\otimes\SU n\otimes\U p$
where $3\leq m\leq n\leq p$ is discarded by means 
of Lemma~\ref{monoton} as above. Hence we may assume that $G$ is the 
product of
two simple groups and a circle factor.
Now $\rho(G)\subset\U m\otimes\SU n=\SU m\otimes\U n$
and we may assume $m\leq n$. 
Since we are interested in $c(\rho)=4$ or $5$, in view of 
Subsection~\ref{sec:subgroups-products} we may assume $m=3$ or $4$.
Again invoking Lemma~\ref{monoton}, it suffices to exclude
the case $\rho=(\U3\times G_2,\C^3\otimes_{\mathbf C}V_2)$
where $G_2$ is simple. In this case
$c(\rho)\geq c(3\rho_2)-9>3c(\rho_2)-9\geq6$ if $c(\rho_2)\geq5$. 
On the other hand, for $2\leq c(\rho_2)\leq4$ the few cases given in the tables
in Section~\ref{sec:simple} are excluded
by the dimension bound.

\section{Final arguments} 

We are going to finish the proof of Theorem~\ref{mainclass}. 
 
Collecting the lists of Subsection~\ref{sec:simple}
as well as the results of Sections~\ref{sec:real-prod}, 
\ref{sec:quat-prod} and~\ref{sec:cx-prod}
yields the first two columns of the Tables~1 and~2.  
It remains to verify the statements about the 
copolarities and the boundaries. 

All but the first two representations in cohomogeneity $4$ have been 
shown to admit reductions to minimal 
generalized sections with the torus $T^2$ 
as the identity component of the group acting on it~\cite{got}. 
Since these actions admit reductions, their quotients have non-empty boundary.
On the other hand, for the first two representations, the quotients do 
not have boundary (owing to Section~\ref{connectedcase},
which is independent of Theorem~\ref{mainclass}).  
Due to \pref{boundaryexist}, these representations do not admit reductions.

In the same way all but two representations of cohomogeneity $5$ admit 
reductions to minimal generalized sections with $3$-dimensional
tori as the identity component of the group acting on it 
(see Subsection~\ref{so4spin7} and~\cite{got}), and the 
orbit space of $\SU2$ on $\C^4$ has no boundary.

For the remaining two presentations, one is a reduction of the 
second via a generalized section. Namely, for the representation
of $\U3\times\SP2$ we enlarge the group by adjoining 
complex conjugation of matrices to obtain an orbit equivalent 
representation. The new representation has non-trivial principal
isotropy group whose fixed point, given by
the subspace of real matrices, 
is a generalized section and the representation space 
of $\SO 3 \times  \U 2$.  Therefore both quotients are 
isometric and have non-empty boundaries.
It remains to prove that the representation 
$\rho$ of $\SO 3 \times  \U 2$ on $\R ^3 \otimes  \R ^4$
has trivial copolarity. 
Since
this information is not needed in the proof of Theorem~\ref{special},
we can use this theorem  to show that the \emph{abstract} copolarity 
of $\rho$ is trivial as follows. 
The dimension of  $\SO 3 \times  \U 2 $ is $7$. If it had a 
minimal   reduction
to a representation of a group $G$ with $\dim  (G) \leq 6$, 
then the restricted representation of $G^0$ would be 
reducible, due to \pref{insertprop}.  
Hence one could apply \tref{special} to deduce that
the representation of $\SO 3 \times  \U 2 $  
must have copolarity equal to the abstract copolarity equal to~$3$.
One could either argue directly that $\rho$ does not admit a reduction to 
a finite extension of its maximal torus, or, more elegantly, 
use \cref{finalpoint} to deduce that $\rho$ must have non-trivial principal
isotropy groups. This provides a contradiction
and finishes the proof of Theorem~\ref{mainclass}.

\providecommand{\bysame}{\leavevmode\hbox to3em{\hrulefill}\thinspace}
\providecommand{\MR}{\relax\ifhmode\unskip\space\fi MR }
\providecommand{\MRhref}[2]{%
  \href{http://www.ams.org/mathscinet-getitem?mr=#1}{#2}
}
\providecommand{\href}[2]{#2}


\begin{thebibliography}{AKLM07}

\bibitem[AKLM07]{AKLM}
D.~Alekseevsky, A.~Kriegl, M.~Losik, and P.~Michor, \emph{Reflection groups on
  {R}iemannian manifolds}, {Ann.\ Mat.\ Pura Appl.\/} \textbf{186} (2007),
  25--58.

\bibitem[Ale06]{ale}
M.~M. Alexandrino, \emph{Proofs of conjectures about singular {R}iemannian
  foliations}, {Geom.\ Dedicata} \textbf{119} (2006), 219--234.

\bibitem[BCO03]{bco}
J.~Berndt, S.~Console, and C.~Olmos, \emph{Submanifolds and holonomy}, Research
  Notes in Mathematics, no. 434, {Chapman \& Hall/CRC, Boca Raton}, 2003.

\bibitem[Cle09]{clerc}
J.-L. Clerc, \emph{Geometry of the {S}hilov boundary of a bounded symmetric
  domain}, {J.\ Geom.\ Symmetry Phys.\/} \textbf{13} (2009), 25--74.

\bibitem[Dad85]{dad}
J.~Dadok, \emph{Polar coordinates induced by actions of compact {L}ie groups},
  {Trans. Amer. Math. Soc.} \textbf{288} (1985), 125--137.

\bibitem[Dav10]{Davis}
M.~Davis, \emph{Lectures on orbifolds and reflection groups}, Higher Education
  Press, pp.~63--93, Springer-Verlag, 2010.

\bibitem[EH99]{eh}
J.~Eschenburg and E.~Heintze, \emph{On the classification of polar
  representations}, {Math. Z.} \textbf{232} (1999), 391--398.

\bibitem[GOT04]{got}
C.~Gorodski, C.~Olmos, and R.~Tojeiro, \emph{Copolarity of isometric actions},
  {Trans.~Amer.~Math.~Soc.} \textbf{356} (2004), 1585--1608.

\bibitem[GT00]{gt0}
C.~Gorodski and G.~Thorbergsson, \emph{Representations of compact {L}ie groups
  and the osculating spaces of their orbits}, Preprint, Univ. of Cologne, (also
  E-print math.~DG/0203196), 2000.

\bibitem[GT03]{gt}
\bysame, \emph{The classification of taut irreducible representations}, {J.
  Reine Angew. Math.} \textbf{555} (2003), 187--235.

\bibitem[HH70]{hh}
W.-C. Hsiang and W.-Y. Hsiang, \emph{Differentiable actions of compact
  connected classical groups, {II}}, {Ann.\ of Math.\ (2)} (1970), no.~92,
  189–223.

\bibitem[HL71]{hl}
W.-Y. Hsiang and H.~B. {Lawson Jr.}, \emph{Minimal submanifolds of low
  cohomogeneity}, {J. Differential Geom.} \textbf{5} (1971), 1--38.

\bibitem[HLO06]{hlo}
E.~Heintze, X.~Liu, and C.~Olmos, \emph{Isoparametric submanifolds and a
  {C}hevalley-type restriction theorem}, Integrable systems, geometry, and
  topology (C.-L. Terng, ed.), AMS/IP Studies in Advanced Mathematics, no.~36,
  Somerville, MA: International Press. Providence, RI: American Mathmatical
  Society, 2006, pp.~151--234.

\bibitem[Kol02]{K}
A.~Kollross, \emph{A classification of hyperpolar and cohomogeneity one
  actions}, {Trans. Amer. Math. Soc.} \textbf{354} (2002), 571--612.

\bibitem[LR79]{lr}
D.~Luna and R.~W. Richardson, \emph{A generalization of the {C}hevalley
  restriction theorem}, {Duke Math. J.} \textbf{46} (1979), 487--496.

\bibitem[LS66]{equiang}
J.~Lint and J.~Seidel, \emph{Equilateral point sets in elliptic geometry},
  {Indagationes Math.\/} \textbf{28} (1966), 335--348.

\bibitem[LT10]{LT}
A.~Lytchak and G.~Thorbergsson, \emph{Curvature explosion in quotients and
  applications}, {J.\ Differential Geom.\/} \textbf{85} (2010), 117--140.

\bibitem[Lyt10]{Lyt}
A.~Lytchak, \emph{Geometric resolution of singular {R}iemannian foliations},
  {Geom. Dedicata} \textbf{149} (2010), 379--395.

\bibitem[M{\"u}n81]{Mu2}
H.~F. M{\"u}nzner, \emph{{Isoparametrische Hyperfl\"achen in Sph\"aren}, {II}},
  {Math. Ann.} \textbf{256} (1981), 215--232.

\bibitem[{Oni}62]{On}
A.~L. {Oni\v s\v cik}, \emph{Inclusion relations between transitive compact
  transformation groups ({R}ussian)}, {Trudy Moskov.\ Mat.\ Ob\v s\v c.\/}
  \textbf{11} (1962), 199--242, English transl., Amer.\ Math.\ Soc.\ Transl.\
  (2) 50 (1966), 5--58.

\bibitem[PT88]{pt2}
R.~S. Palais and C.-L. Terng, \emph{Critical point theory and submanifold
  geometry}, Lect. Notes in Math., no. 1353, Springer-Verlag, 1988.

\bibitem[Str94]{str}
E.~Straume, \emph{On the invariant theory and geometry of compact linear groups
  of cohomogeneity $\leq3$}, {Diff. Geom. and its Appl.} \textbf{4} (1994),
  1--23.

\bibitem[Str96]{str2}
\bysame, \emph{Compact connected {L}ie transformation groups on spheres with
  low cohomogeneity, {I}}, {Mem.\ Amer.\ Math.\ Soc.\/}, no. 569, {American
  Mathematical Society}, 1996.

\bibitem[Thu80]{thu}
W.~P. Thurston, \emph{The geometry and topology of three-manifolds}, Lecture
  notes available at $\langle$http://library.msri.org/books/gt3m$\rangle$,
  1980.

\bibitem[Uch80]{uch}
F.~Uchida, \emph{An orthogonal transformation group of $(8k-1)$-sphere}, {J.
  Differential Geom.} \textbf{15} (1980), 569--574.

\bibitem[Wie11]{Wiesen}
S.~Wiesendorf, \emph{Singular {R}iemannian foliations with taut leaves}, Ph.D.
  thesis, Univ. of Cologne, 2011.

\bibitem[Wil07]{Wilk}
B.~Wilking, \emph{A duality theorem for {R}iemannian foliations in non-negative
  curvature}, {Geom.\ Funct.\ Anal.\/} \textbf{17} (2007), 1297--1320.

\bibitem[Wol84]{wolf}
J.A. Wolf, \emph{Spaces of constant curvature}, 5th ed., Publish or Perish,
  Houston, 1984.

\bibitem[Yas86]{yas}
O.~Yasukura, \emph{A classification of orthogonal transformation groups of low
  cohomogeneity}, {Tsukuba J. Math.} \textbf{10} (1986), 299--326.

\end{thebibliography}

\end{document}